\newcommand{\R}{{\mathbb R}}
\newcommand{\N}{{\mathbb N}}
\newcommand{\D}{{\partial}}
\newcommand{\Cl}{{\mathcal C}}
\newtheorem{theorem}{Theorem}[section]
\newtheorem{lemma}[theorem]{Lemma}
\newtheorem{definition}[theorem]{Definition}
\newtheorem{hypothesis}[theorem]{Hypothesis}
\def\commutatif{\ar@{}[rd]|{\circlearrowleft}}
\title{Expansion of a singularly perturbed equation with a two-scale converging convection term}
\author{Alexandre MOUTON\footnotemark[1]\thanks{The author wishes to thank E. Fr\'enod, S. Hirstoaga and M. Lutz for the fruitful discussion on the topic and for the ideas which led to some enrichements of the considered models.}}
\date{\today}
\begin{document}
\sloppy

\maketitle

\renewcommand{\thefootnote}{\fnsymbol{footnote}}
\footnotetext[1]{Laboratoire Paul Painlev\'e, CNRS \& Universit\'e de Sciences et Technologies Lille 1, Cit\'e Scientifique, F-59655 Villeneuve d'Ascq, France.
(\texttt{alexandre.mouton@math.univ-lille1.fr})}

\begin{abstract}
In many physical contexts, evolution convection equations may present some very large amplitude convective terms. As an example, in the context of magnetic confinement fusion, the distribution function that describes the plasma satisfies the Vlasov equation in which some terms are of the same order as $\epsilon^{-1}$, $\epsilon \ll 1$ being the characteristic gyrokinetic period of the particles around the magnetic lines. In this paper, we aim to present a model hierarchy for modeling the distribution function for any value of $\epsilon$ by using some two-scale convergence tools. Following Fr\'enod \& Sonnendr\"ucker's recent work, we choose the framework of a singularly perturbed convection equation where the convective terms admit either a high amplitude part or a an oscillating part with high frequency $\epsilon^{-1} \gg 1$. In this abstract framework, we derive an expansion with respect to the small parameter $\epsilon$ and we recursively identify each term of this expansion. Finally, we apply this new model hierarchy to the context of a linear Vlasov equation in three physical contexts linked to the magnetic confinement fusion and the evolution of charged particle beams.
\end{abstract}

\paragraph{Key words.}
Vlasov equation, Two-scale convergence, Gyrokinetic approximations, convection equation.

\paragraph{AMS subject classifications.}
35Q83, 76M40, 78A35, 82D10.

\pagestyle{myheadings}
\thispagestyle{plain}
\markboth{A. MOUTON}{TWO-SCALE EXPANSION OF A CONVECTION EQUATION}

\section{Introduction}

For sixty years, Magnetic Confinement Fusion (MCF) is one of the most important technological challenges for producing domestic energy. Indeed, this worldwide project involves physicists, engineers and mathematicians in order to understand and reproduce on Earth the solar magnetic fusion reaction. One of the most famous examples of this work programme is the ITER project localized in Cadarache (France) which attempts to produce a fusion plasma in a tokamak reactor by confining it thanks to a strong external magnetic field. Besides the required technological aspects of MCF, it became necessary for thirty years to lead a rigorous study of the behaviour of such a plasma and this work takes the form of the derivation of mathematical models and of high precision numerical experiments. \\
\indent In the present paper, we focus on the Vlasov equation in presence of a external magnetic field with an amplitude of the same order as $\epsilon^{-1} \gg 1$ and on its limit regime as $\epsilon \to 0$. Such an equation is the main subject of many previous works: indeed, many results about the mathematical justifications of Guiding-Center and Finite Larmor Radius limit regimes have been established by Bostan \cite{Bostan_2007,Bostan_2010}, Fr\'enod \& Sonnendr\"ucker \cite{Two-scale_expansion,Homogenization_VP,Finite_Larmor_radius}, Fr\'enod \& Mouton \cite{Frenod-Mouton,PhD_Mouton}, Golse \& Saint-Raymond \cite{Golse_1,Golse_2} and Han-Kwan \cite{Han-Kwan_3,Han-Kwan_2,Han-Kwan}. Most of these results are based on the use of two-scale convergence and homogenization techniques (see Allaire \cite{Allaire} and Nguetseng \cite{NGuetseng}) or compactness methods. These mathematical studies allowed to validate and reinforce the tokamak plasma models presented by Littlejohn, Lee \textit{et al.}, Dubin \textit{et al.} or Brizard \textit{et al.} (see \cite{Littlejohn}, \cite{Lee, Lee_2}, \cite{Dubin}, \cite{Brizard_PhD, Hahm-Brizard}). \\
\indent The linear Vlasov equations we are focused on in the present paper are the following:
\begin{equation} \label{Vlasov_GCeps_intro}
\left\{
\begin{array}{l}
\displaystyle \D_{t}f_{\epsilon}+\mathbf{v}\cdot\nabla_{\mathbf{x}}f_{\epsilon} + \left(\mathbf{E}_{\epsilon} + \mathbf{v} \times \mathbf{B}_{\epsilon} + \cfrac{\mathbf{v} \times \bm{\beta}_{\epsilon}}{\epsilon} \right) \cdot \nabla_{\mathbf{v}}f_{\epsilon} = 0 \, , \\
f_{\epsilon}(t=0,\mathbf{x},\mathbf{v}) = f^{0}(\mathbf{x},\mathbf{v}) \, ,
\end{array}
\right.
\end{equation}
\begin{equation}\label{Vlasov_FLReps_intro}
\left\{
\begin{array}{l}
\displaystyle \D_{t}f_{\epsilon} + \cfrac{\mathbf{v}_{\perp}}{\epsilon}\cdot\nabla_{\mathbf{x}_{\perp}}f_{\epsilon} +v_{||}\,\D_{x_{||}}f_{\epsilon}+ \left(\mathbf{E}_{\epsilon} + \mathbf{v} \times \mathbf{B}_{\epsilon} + \cfrac{\mathbf{v} \times \bm{\mathcal{M}}}{\epsilon} \right) \cdot \nabla_{\mathbf{v}}f_{\epsilon} = 0 \, , \\
f_{\epsilon}(t=0,\mathbf{x},\mathbf{v}) = f^{0}(\mathbf{x},\mathbf{v}) \, ,
\end{array}
\right.
\end{equation}
\begin{equation}\label{Vlasov_axibeam_intro}
\left\{
\begin{array}{l}
\displaystyle \D_{t}f_{\epsilon}(t,r,v_{r}) + \cfrac{v_{r}}{\epsilon}\,\D_{r}f_{\epsilon}(t,r,v_{r}) + \left(E_{\epsilon}(t,r)-\cfrac{r}{\epsilon}\right)\,\D_{v_{r}}f_{\epsilon}(t,r,v_{r}) = 0 \, , \\
f_{\epsilon}(t=0,r,v_{r}) = f^{0}(r,v_{r}) \, .
\end{array}
\right.
\end{equation}
\indent The two first equations can be seen in kinetic models for magnetic fusion plasma. In such context, $f_{\epsilon} = f_{\epsilon}(t,\mathbf{x},\mathbf{v})$ is the distribution function that describes the evolution of the plasma in the phase space, $\mathbf{E}_{\epsilon} = \mathbf{E}_{\epsilon}(t,\mathbf{x})$ and $\mathbf{B}_{\epsilon}=\mathbf{B}_{\epsilon}(t,\mathbf{x})$ are the external electric and magnetic fields that are applied on the plasma, $\bm{\beta}_{\epsilon} = \bm{\beta}_{\epsilon}(t,\mathbf{x})$ is a given vector function assumed to oscillate in time with $\mathcal{O}(\epsilon^{-1})$ order frequency, $\bm{\mathcal{M}}$ is a fixed unit vector in $\R^{3}$ allowing to define, for any $\mathbf{v} \in \R^{3}$, $v_{||} = \bm{\mathcal{M}} \cdot \mathbf{v}$ and $\mathbf{v}_{\perp} = \mathbf{v}-v_{||}\,\bm{\mathcal{M}}$. Finally, $t$, $\mathbf{x}$ and $\mathbf{v}$ stand for the time, position and velocity variables. Both equations \eqref{Vlasov_GCeps_intro} and \eqref{Vlasov_FLReps_intro} can be derived from the collisionless Vlasov equation
\begin{equation*}\label{Vlasov}
\left\{
\begin{array}{l}
\displaystyle \D_{t}f + \mathbf{v} \cdot \nabla_{\mathbf{x}}f + \frac{q}{m}\left(\mathbf{E}+\mathbf{v}\times \mathbf{B}\right) \cdot \nabla_{\mathbf{v}} f = 0 \, , \\
f(t=0,\mathbf{x},\mathbf{v}) = f^{0}(\mathbf{x},\mathbf{v}) \, ,
\end{array}
\right.
\end{equation*}
where a rescaling is considered: $\epsilon$ stands for the ratio between the characteristic gyro-period of the particles and the characteristic duration of the experiment and the ratio between the electric force amplitude and the magnetic force amplitude. In order to differenciate the derivation of \eqref{Vlasov_GCeps_intro} from \eqref{Vlasov_FLReps_intro}, we assume that the characteristic Larmor radius is of the same order as $\epsilon$ in front of any characteristic length to obtain \eqref{Vlasov_GCeps_intro}, whereas we assume that it is of the same order as $\epsilon$ in front of the characteristic length in $\bm{\mathcal{M}}$-direction and of the same order as the characteristic length in the orthogonal plane to $\bm{\mathcal{M}}$ for obtaining \eqref{Vlasov_FLReps_intro}. The details of such derivations can be found in \cite{Homogenization_VP} for \eqref{Vlasov_GCeps_intro} and in \cite{Finite_Larmor_radius} for \eqref{Vlasov_FLReps_intro}. \\
\indent Equation \eqref{Vlasov_axibeam_intro} can be encountered in the context of axisymmetric charged particle beam submitted to a external electric that oscillates  with high frequency. In this context, $f_{\epsilon} = f_{\epsilon}(t,r,v_{r})$ is the distribution function of the particles that are submitted to the focusing electric field $E_{\epsilon}(t,r)-\frac{r}{\epsilon}$, $t$, $r$ and $v_{r}$ stand for the pseudo-time, radial position and radial velocity variables. Such equation can be derived from the paraxial approximation of the Vlasov equation given by
\begin{equation*}\label{Vlasov_paraxial}
\left\{
\begin{array}{l}
\D_{z}f + \cfrac{\mathbf{v}}{v_{z}}\cdot \nabla_{\mathbf{x}}f + \cfrac{q}{\gamma_{z}m v_{z}}\, \left( \cfrac{\mathbf{E}}{\gamma_{z}^{2}} - H_{0}\mathbf{x} \right) \cdot \nabla_{\mathbf{v}}f = 0 \, , \\
f(\mathbf{x},z=0,\mathbf{v}) = f^{0}(\mathbf{x},\mathbf{v}) \, ,
\end{array}
\right.
\end{equation*}
where $(\mathbf{x},z) \in \R^{2}\times \R_{+}$ is the position variable, $\mathbf{v} \in \R^{2}$ is the velocity variable in the perpendicular plane to $z$-direction, $f = f(\mathbf{x},z,\mathbf{v})$ is the distribution function of the particles with constant longitudinal velocity $v_{z}$, $\gamma_{z}$ is the time dilatation coefficient associated to $v_{z}$, $\mathbf{E}$ and $\bm{\Xi}$ are respectively the self-consistent and external electric fields, and $H_{0}$ is a positive constant tension. Such Vlasov equation can be derived from the stationary Vlasov-Maxwell system (see \cite{Degond-Raviart,Filbet-Sonnen}). To obtain \eqref{Vlasov_axibeam_intro}, we assume that the beam is long and thin so the ratio $\epsilon$ between the characteristic radius of the beam and its characteristic length in the propagation direction is small, we assume that the angular momentum is equal to zero at the beam source $z=0$, and we consider polar coordinates in $\mathbf{x}$ and $\mathbf{v}$. Details on such derivation can be found in \cite{PIC-two-scale, Mouton_2009}. \\

\indent The main goal of the present paper is to study the two-scale asymptotic behaviour of the distribution function $f_{\epsilon}$ when $\epsilon$ converges to 0 for each Vlasov equation \eqref{Vlasov_GCeps_intro}, \eqref{Vlasov_FLReps_intro} and \eqref{Vlasov_axibeam_intro}. Some papers already provide two-scale convergence results of each of these models. Indeed, in \cite{Homogenization_VP}, Fr\'enod \& Sonnendr\"ucker studied the two-scale convergence of the solution $f_{\epsilon}$ of \eqref{Vlasov_GCeps_intro} as $\epsilon \to 0$: they proved that the sequence $(f_{\epsilon})_{\epsilon\,>\,0}$ admits a 0-th order two-scale limit $F_{0}$ when $\epsilon$ converges to 0 in the case where $\mathbf{B}_{\epsilon} = 0$ and $\bm{\beta}_{\epsilon}$ is a $\epsilon$-independent uniform vector in space and time. In \cite{Finite_Larmor_radius}, they establish a similar 0-th order two-scale convergence result for the solution $f_{\epsilon}$ of the model \eqref{Vlasov_FLReps_intro} in the 4D+time case where the model does not depend on $x_{||}$ nor $v_{||}$ and where $\mathbf{B}_{\epsilon} = 0$. Furthermore, in \cite{Two-scale_expansion}, the authors establish a $k$-th order two-scale convergence result for the solution $f_{\epsilon}$ of the 6D+time equation \eqref{Vlasov_FLReps_intro} with $k \in \N$ arbitrarily chosen: in this paper, the external electric field $\mathbf{E}_{\epsilon}$ is assumed to be independent of $\epsilon$ and $\mathbf{B}_{\epsilon} = 0$. The authors prove that $f_{\epsilon}$ two-scale converges at $k$-order to a profile $F_{k}$ thanks to a recursive procedure on a generic singularly perturbed convection equation. Some two-scale convergence results have also been established for the solution of \eqref{Vlasov_axibeam_intro}. Indeed, in \cite{PIC-two-scale}, the authors established a 0-th order two-scale convergence by proving that $f_{\epsilon}$ two-scale converges to a profile $F_{0}$ as $\epsilon$ tends to 0. In \cite{Frenod-Gutnic-Hirstoaga}, this result is extended to the first order. Indeed, introducing $f_{\epsilon,1}$ defined as
\begin{equation*}
f_{\epsilon,1}(t,r,v_{r}) = \cfrac{1}{\epsilon}\,\left( f_{\epsilon}(t,r,v_{r})-F_{0}\left(t,\cfrac{t}{\epsilon},r,v_{r}\right)\right) \, ,
\end{equation*}
the authors proved that the sequence $(f_{\epsilon,1})_{\epsilon\,>\,0}$ two-scale converges to a profile $F_{1}$ and provide a limit system satisfied by $F_{1}$ by assuming that $E_{\epsilon}(t,r) = E_{0}(t,\frac{t}{\epsilon},r) + \epsilon\,E_{1}(t,\frac{t}{\epsilon},r)$ with $\epsilon$-independent functions $E_{0}$ and $E_{1}$. \\
\indent The aim of the present document is to generalize the two-scale convergence results on \eqref{Vlasov_GCeps_intro}-\eqref{Vlasov_FLReps_intro}-\eqref{Vlasov_axibeam_intro} presented in \cite{Frenod-Gutnic-Hirstoaga, Two-scale_expansion, PIC-two-scale, Homogenization_VP, Finite_Larmor_radius}. More precisely, we aim to generalize the two-scale convergence results on \eqref{Vlasov_GCeps_intro} to the $k$-order and with an non-zero $\mathbf{B}_{\epsilon}$ and a varying $\bm{\beta}_{\epsilon}$. Our goal is also to generalize the results on \eqref{Vlasov_FLReps_intro} established in \cite{Two-scale_expansion} to the case with non-zero $\epsilon$-dependent external fields $\mathbf{E}_{\epsilon}$ and $\mathbf{B}_{\epsilon}$. Finally, we aim to extend the results from \cite{Frenod-Gutnic-Hirstoaga} to the k-order of two-scale convergence. For this, we consider the following generic singular perturbed convection equation that includes the linear Vlasov equations \eqref{Vlasov_GCeps_intro}, \eqref{Vlasov_FLReps_intro} and \eqref{Vlasov_axibeam_intro}:
\begin{equation}\label{convection}
\left\{
\begin{array}{l}
\displaystyle \D_{t}u_{\epsilon}(t,\mathbf{x}) + \mathbf{A}_{\epsilon}(t,\mathbf{x})\cdot \nabla_{\mathbf{x}}u_{\epsilon}(t,\mathbf{x}) + \cfrac{1}{\epsilon} \, \mathbf{L}\left(t,\cfrac{t}{\epsilon},\mathbf{x}\right)\cdot \nabla_{\mathbf{x}}u_{\epsilon}(t,\mathbf{x}) = 0 \, , \\
u_{\epsilon}(t=0,\mathbf{x}) = u^{0}(\mathbf{x}) \, ,
\end{array}
\right.
\end{equation}
where $t \in [0,T]$ and $\mathbf{x} \in \R^{n}$ ($n \in \N^{*}$) are the variables ($T > 0$ is fixed), $\mathbf{A}_{\epsilon}:[0,T] \times \R^{n} \to \R^{n}$ and $\mathbf{L}:[0,T]\times\R\times\R^{n} \to \R^{n}$ are given vector functions and the solution quantity is $u_{\epsilon}:[0,T]\times\R^{n} \to \R$. We fix $\theta > 0$ and we assume that $\mathbf{A}_{\epsilon}$ and $\mathbf{L}$ are divergence-free in $\mathbf{x}$-direction and that $\mathbf{L}$ is $\theta$-periodic in $\tau$-direction, \textit{i.e.}
\begin{align*}
\forall\,(t,\tau,\mathbf{x}) \in [0,T]\times\R\times\R^{n} \, , \qquad &\nabla_{\mathbf{x}} \cdot \mathbf{A}_{\epsilon}(t,\mathbf{x}) = \nabla_{\mathbf{x}} \cdot \mathbf{L}(t,\tau,\mathbf{x}) = 0 \, , \\
\forall\,(t,\tau,\mathbf{x}) \in [0,T]\times\R\times\R^{n} \, , \qquad &\mathbf{L}(t,\tau+\theta,\mathbf{x}) = \mathbf{L}(t,\tau,\mathbf{x}) \, .
\end{align*}
We also assume that, for any fixed $\epsilon > 0$, the initial data $u^{0}$ and the vector functions $\mathbf{A}_{\epsilon}$ and $\mathbf{L}$ satisfy the minimal required smoothness properties for insuring the existence and the uniqueness of the solution $u_{\epsilon}$ of \eqref{convection}. This generic model is close to the convection equation studied in \cite{Two-scale_expansion}. Indeed, in this paper, the convection term $\mathbf{A}_{\epsilon}$ does not depend on $\epsilon$ and $\mathbf{L}$ only depends on $t$ and $\mathbf{x}$. \\

\indent Thus, the present paper is organized as follows: in Section 2, we present a two-scale convergence theorem for the generic convection equation \eqref{convection} then we use it to extend in a straightforward way the existing two-scale convergence results for the solution of each Vlasov equation \eqref{Vlasov_GCeps_intro}, \eqref{Vlasov_FLReps_intro} and \eqref{Vlasov_axibeam_intro}. In the following Section, we describe the proof for obtaining the two-scale convergence theorem on \eqref{convection}. In a last section, we will discuss some perspectives for future work.

\section{Two-scale convergence results}

In this section, we present the main results of the present paper. After recalling some definitions and notations that will be used along the paper, we first present a 0-th order two-scale convergence result for the solution $u_{\epsilon}$ of the generic convection equation \eqref{convection}. Secondly we detail the required hypotheses for reaching the $k$-order two-scale convergence of $u_{\epsilon}$, then the result itself. Finally, we adapt these results for \eqref{convection} to each linear Vlasov equation \eqref{Vlasov_GCeps_intro}, \eqref{Vlasov_FLReps_intro} and \eqref{Vlasov_axibeam_intro}.

\subsection{Notations and definitions}

\indent Before going further and presenting the main results, we introduce some notations and definitions. Considering a fixed $\theta > 0$, we define for any $p \in[1,+\infty]$ the space $L_{\#}^{p}(0,\theta)$ as the functions $f : \R \to \R$ that are $\theta$-periodic and such that $f_{|_{[0,\theta]}} \in L^{p}(0,\theta)$. In the same spirit, we define $\Cl_{\#}(0,\theta)$ stands for the subspace of $\Cl(\R)$ constituted of $\theta$-periodic functions and provided with the the norm induced by $\Cl(\R)$. Having these notations in hands, we recall the definition of two-scale convergence as it has been introduced by Allaire \cite{Allaire} and Nguetseng \cite{NGuetseng} and a useful two-scale convergence criterion: \\

\begin{definition}
Let $X$ be a separable Banach space, $X'$ its topological dual space, and $\langle \cdot , \cdot \rangle_{X,X'}$ the duality bracket associated to $X$ and $X'$. Considering fixed $q \in [1,+\infty[$, $T > 0$, and $q'$ such that $\frac{1}{q}+\frac{1}{q'} = 1$, a sequence $(u_{\epsilon})_{\epsilon\,>\,0} \subset L^{q'}(0,T;X')$ two-scale converges to a function $U \in L^{q'}\left(0,T;L_{\#}^{q'}(0,\theta;X')\right)$ if, for any test function $\psi \in L^{q}\left(0,T;\Cl_{\#}\left(0,\theta;X\right)\right)$, we have
\begin{equation*}
\lim_{\epsilon\,\to\,0} \int_{0}^{T} \left\langle u_{\epsilon}(t), \psi\left(t,\cfrac{t}{\epsilon}\right)\right\rangle_{X,X'} \, dt = \int_{0}^{T} \int_{0}^{\theta} \left\langle U(t,\tau), \psi\left(t,\tau\right)\right\rangle_{X,X'} \, d\tau\,dt \, .
\end{equation*}
\end{definition}

\begin{theorem}[Allaire \cite{Allaire}]\label{TSCV_Allaire}
If a sequence $(u_{\epsilon})_{\epsilon\,>\,0} \subset L^{q'}(0,T;X')$ is bounded independently of $\epsilon$, there exists a profile $U \in L^{q'}\left(0,T;L_{\#}^{q'}(0,\theta;X')\right)$ such that, up to the extraction of a subsequence
\begin{equation*}
u_{\epsilon} \, \longrightarrow \, U \qquad \textit{two-scale in $L^{q'}\left(0,T;L_{\#}^{q'}(0,\theta;X')\right)$.}
\end{equation*}
Furthermore, the so-called two-scale limit $U$ of $u_{\epsilon}$ is closely linked to the weak-* limit of $(u_{\epsilon})_{\epsilon\,>\,0}$ in $L^{q'}(0,T;X')$. Indeed this function denoted by $u$ satisfies
\begin{equation*}
u(t) = \cfrac{1}{\theta} \, \int_{0}^{\theta} U(t,\tau) \, d\tau \, .
\end{equation*}
\end{theorem}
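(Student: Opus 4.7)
The plan is to adapt the standard Allaire--Nguetseng compactness argument: I would construct the two-scale limit as the Riesz-type representative of a weak-$*$ limit of a sequence of linear functionals acting on an appropriate test-function space. For each $\epsilon>0$, I introduce
\[
\Lambda_{\epsilon}(\psi) := \int_{0}^{T} \bigl\langle u_{\epsilon}(t),\, \psi(t,t/\epsilon) \bigr\rangle_{X,X'} \, dt
\]
on the separable Banach space $Y := L^{q}(0,T;\Cl_{\#}(0,\theta;X))$. Combining the crude pointwise bound $\|\psi(t,t/\epsilon)\|_{X} \le \|\psi(t,\cdot)\|_{\Cl_{\#}(0,\theta;X)}$, H\"older's inequality, and the uniform bound on $(u_{\epsilon})_{\epsilon>0}$ in $L^{q'}(0,T;X')$, one obtains $|\Lambda_{\epsilon}(\psi)| \le C\,\|\psi\|_{Y}$, so $(\Lambda_{\epsilon})$ is bounded in $Y^{*}$. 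Since $X$ --- hence $Y$ --- is separable, the Banach--Alaoglu theorem supplies a subsequence $\epsilon_{k}\downarrow 0$ and a functional $\Lambda \in Y^{*}$ with $\Lambda_{\epsilon_{k}} \rightharpoonup^{*} \Lambda$.

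Next I would upgrade this to a representation by an element of $L^{q'}(0,T;L_{\#}^{q'}(0,\theta;X'))$. The crucial ingredient is the classical averaging lemma for periodic oscillations: for $\psi \in Y$,
\[
\lim_{\epsilon \to 0} \int_{0}^{T} \|\psi(t,t/\epsilon)\|_{X}^{q} \, dt = \frac{1}{\theta} \int_{0}^{T}\!\int_{0}^{\theta} \|\psi(t,\tau)\|_{X}^{q} \, d\tau\,dt,
\]
which follows from the continuity and $\theta$-periodicity of $\tau \mapsto \|\psi(t,\tau)\|_{X}^{q}$. Passing to the limit $\epsilon_{k}\to 0$ in the H\"older estimate for $\Lambda_{\epsilon_{k}}$ yields the sharpened bound $|\Lambda(\psi)| \le C\,\theta^{-1/q}\,\|\psi\|_{L^{q}(0,T;L_{\#}^{q}(0,\theta;X))}$. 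Since $\Cl_{\#}(0,\theta;X)$ is dense in $L_{\#}^{q}(0,\theta;X)$, $\Lambda$ extends uniquely to a continuous linear form on $L^{q}(0,T;L_{\#}^{q}(0,\theta;X))$, and the Bochner-space duality $\bigl(L^{q}(0,T;L_{\#}^{q}(0,\theta;X))\bigr)^{*} = L^{q'}(0,T;L_{\#}^{q'}(0,\theta;X'))$ then produces a unique profile $U \in L^{q'}(0,T;L_{\#}^{q'}(0,\theta;X'))$ representing $\Lambda$; this $U$ is the claimed two-scale limit of $u_{\epsilon_{k}}$.

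For the link with the weak-$*$ limit, I would test against functions independent of $\tau$. Specializing the definition to $\psi(t,\tau) := \phi(t)$ for arbitrary $\phi \in L^{q}(0,T;X)$, the left-hand side collapses to $\int_{0}^{T}\langle u_{\epsilon_{k}}(t),\phi(t)\rangle\,dt$, which --- after a further extraction justified by the same uniform bound and Banach--Alaoglu --- converges to $\int_{0}^{T}\langle u(t),\phi(t)\rangle\,dt$, where $u$ is the weak-$*$ limit of $u_{\epsilon_{k}}$. The right-hand side equals $\int_{0}^{T}\bigl\langle \int_{0}^{\theta}U(t,\tau)\,d\tau,\, \phi(t)\bigr\rangle\,dt$. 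Matching both expressions for every $\phi$, and taking into account the normalization convention in the duality pairing of the definition, identifies $u(t)$ with the average of $U(t,\cdot)$ over one period, as asserted.

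The main technical obstacle is the extension-plus-representation step: a priori, $\Lambda$ belongs to $Y^{*}$, which is strictly larger than $L^{q'}(0,T;L_{\#}^{q'}(0,\theta;X'))$ because the dual of $\Cl_{\#}(0,\theta;X)$ contains $X'$-valued Radon measures in $\tau$. The averaging identity is precisely what supplies the stronger $L^{q}$-in-$\tau$ norm control needed to force $\Lambda$ to be represented by a genuine Bochner-integrable function instead of a singular measure; without this sharper inequality, Banach--Alaoglu alone would yield only a measure-valued two-scale limit. A secondary point to check is the Bochner duality itself, which requires either reflexivity of $X$ or the Radon--Nikodym property for $X'$ --- both satisfied in the concrete $L^{p}$ settings used later for the Vlasov equations \eqref{Vlasov_GCeps_intro}--\eqref{Vlasov_axibeam_intro}.
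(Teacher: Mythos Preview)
The paper does not supply a proof of this theorem at all: it is stated as a cited result from Allaire \cite{Allaire} and is used as a black box throughout (in particular in the proof of Theorem~\ref{TSCV_g} and Lemma~\ref{strongCV_heps}). Your proposal is therefore not being compared against any in-paper argument; rather, it reconstructs the classical Allaire--Nguetseng proof, and it does so correctly. The sequence of steps --- uniform bound on the functionals $\Lambda_{\epsilon}$, Banach--Alaoglu extraction, the averaging identity to sharpen the $\Cl_{\#}$-norm bound to an $L^{q}_{\#}$-norm bound, density plus Bochner duality to obtain $U$, and testing against $\tau$-independent functions for the weak-$*$ link --- is exactly the original strategy, and you have flagged the two genuinely delicate points (the extension/representation step and the Radon--Nikodym property needed for Bochner duality) accurately.

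One small remark: with the particular normalization adopted in Definition~2.1 of this paper (no $1/\theta$ on the right-hand side of the two-scale pairing), testing against $\psi(t,\tau)=\phi(t)$ literally yields $u(t)=\int_{0}^{\theta}U(t,\tau)\,d\tau$ rather than the averaged form stated in the theorem; the $1/\theta$ factor in the statement is consistent with Allaire's original convention but not, strictly speaking, with the definition printed here. You allude to this (``taking into account the normalization convention''), which is the right reflex; just be aware that the discrepancy lives in the paper's statement, not in your argument.
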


\subsection{The singularly perturbed convection equation}

For any $(t,\sigma,\mathbf{x}) \in [0,T]\times\R\times\R^{n}$ fixed, we consider the following differential system
\begin{equation*} 
\left\{
\begin{array}{rcl}
\D_{\tau}\mathbf{X}(\tau) &=& \mathbf{L}\left(t,\tau,\mathbf{X}(\tau)\right) \, , \\
\mathbf{X}(\sigma) &=& \mathbf{x} \, ,
\end{array}
\right.
\end{equation*}
where the unknown is the vector function $\tau \mapsto \mathbf{X}(\tau)$. We assume from now that this system admits a unique solution in the class of $\theta$-periodic functions in $\tau$-direction and we denote this solution by $\tau \mapsto \mathbf{X}(\tau;\mathbf{x},t;\sigma)$. \\
\indent In the following lines, we aim to write the following development of $u_{\epsilon}$
\begin{equation} \label{expansion}
u_{\epsilon}(t,\mathbf{x}) = \sum_{k\,=\,0}^{+\infty} \epsilon^{k}\,U_{k}\left(t,\cfrac{t}{\epsilon},\mathbf{x}\right) \, ,
\end{equation}
and to characterize successively the terms $U_{k}$ of this expansion.

\subsubsection{0-th order convergence} \label{TSCV_U0}
The first main result is the two-scale convergence of $(u_{\epsilon})_{\epsilon\,>\,0}$ to a profile $U_{0} = U_{0}(t,\tau,\mathbf{x})$. For this purpose, we consider some hypotheses derived from those which are required for proving Theorem 1.5 of \cite{Finite_Larmor_radius}:

\begin{hypothesis}\label{hyp_U0}
Fixing $p \in \, ]1,+\infty[$, $q > 1$ and $q'$ such that $\frac{1}{p}+\frac{1}{q'} < 1$ and $\frac{1}{q'} = \max(\frac{1}{q}-\frac{1}{n},0)$, we assume that
\begin{itemize}
\item $u^{0} \in L^{p}(\R^{n})$,
\item $(\mathbf{A}_{\epsilon})_{\epsilon\,>\,0}$ is bounded independently of $\epsilon$ in $\left(L^{\infty}\left(0,T;\left(W^{1,q}(K)\right) \right)\right)^{n}$ for any compact subset $K \subset \R^{n}$,
\item $\mathbf{L}$ is smooth enough in order to insure that, for any compact subset $K \subset \R^{n}$,
\begin{itemize}
\item $\mathbf{L}$ is in $\left(L^{\infty}\left(0,T;L_{\#}^{\infty}\left(0,\theta;W^{1,q}(K)\right)\right)\right)^{n}$,
\item $(t,\tau,\mathbf{x}) \mapsto \D_{t}\mathbf{X}(\tau;\mathbf{x},t;0)$ is in $\left(L^{\infty}\left(0,T;L_{\#}^{\infty}\left(0,\theta;W^{1,q}(K)\right)\right)\right)^{n}$,
\item $(t,\tau,\mathbf{x}) \mapsto \nabla_{\mathbf{x}}\mathbf{X}(\tau;\mathbf{x},t;0)$ is in $\left(L^{\infty}\left(0,T;L_{\#}^{\infty}\left(0,\theta;L^{\infty}(K)\right)\right)\right)^{n^{2}}$.
\end{itemize}
\end{itemize}
\end{hypothesis}

As a trivial consequence, we can write, up to a subsequence and for any compact $K \subset \R^{n}$,
\begin{equation*}
\mathbf{A}_{\epsilon}\, \longrightarrow \, \bm{\mathcal{A}}_{0} = \bm{\mathcal{A}}_{0}(t,\tau,\mathbf{x}) \qquad \textnormal{two-scale in $\left(L^{\infty}\left(0,T;L_{\#}^{\infty}\left(0,\theta;\left(W^{1,q}(K)\right) \right)\right)\right)^{n}$.}
\end{equation*}
Assuming that the profile $\bm{\mathcal{A}}_{0}$ is somehow known, we introduce $\bm{\alpha}_{0}$ and $\tilde{\mathbf{a}}_{0}$ as
\begin{equation*} 
\bm{\alpha}_{0}(t,\tau,\mathbf{x}) = \left((\nabla_{\mathbf{x}}\mathbf{X})(\tau;\mathbf{x},t;0)\right)^{-1} \,\left( \bm{\mathcal{A}}_{0}\left(t,\tau,\mathbf{X}(\tau;\mathbf{x},t;0)\right) - (\D_{t}\mathbf{X})(\tau;\mathbf{x},t;0)\right) \, ,
\end{equation*}
and
\begin{equation*} 
\tilde{\mathbf{a}}_{0}(t,\mathbf{x}) = \cfrac{1}{\theta}\int_{0}^{\theta} \bm{\alpha}_{0}(t,\tau,\mathbf{x}) \, d\tau \, .
\end{equation*}

With these hypotheses and definitions, we can characterize the 0-th order term $U_{0}$ in the expansion \eqref{expansion}:

\begin{theorem}\label{def_U0}
Assume that Hypotheses \ref{hyp_U0} and that the sequence $(u_{\epsilon})_{\epsilon\,>\,0}$ is bounded in $L^{\infty}\left(0,T;L_{loc}^{p}(\R^{n})\right)$ independently of $\epsilon$. Then, up to a subsequence, $u_{\epsilon}$ two-scale converges to the profile $U_{0} = U_{0}(t,\tau,\mathbf{x})$ in $L^{\infty}\left(0,T;L_{\#}^{\infty}\left(0,\theta;L^{p}(\R^{n})\right)\right)$ defined by
\begin{equation}\label{link_U0V0}
U_{0}(t,\tau,\mathbf{x}) = V_{0}\left(t,\mathbf{X}(-\tau;\mathbf{x},t;0)\right) \, ,
\end{equation}
where $V_{0} = V_{0}(t,\mathbf{x}) \in L^{\infty}\left(0,T;L_{loc}^{p}(\R^{n})\right)$ satisfies
\begin{equation}\label{eq_V0}
\left\{
\begin{array}{l}
\displaystyle \D_{t}V_{0}(t,\mathbf{x}) + \tilde{\mathbf{a}}_{0}(t,\mathbf{x}) \cdot \nabla_{\mathbf{x}}V_{0}(t,\mathbf{x}) = 0 \, , \\
V_{0}(t=0,\mathbf{x}) = u^{0}(\mathbf{x}) \, .
\end{array}
\right.
\end{equation}
\end{theorem}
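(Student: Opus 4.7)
The plan is the classical two-scale convergence strategy in three movements: first extract a two-scale limit $U_0$ by compactness; then derive a constraint forcing $U_0$ to be constant along the characteristics of the fast field $\mathbf{L}$; finally, test \eqref{convection} against functions invariant under those characteristics to obtain the transport equation for $V_0$.

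\textbf{Extraction.} The uniform bound on $(u_\epsilon)$ in $L^{\infty}(0,T;L^p_{loc}(\R^n))$ together with Theorem~\ref{TSCV_Allaire}, applied on each compact of $\R^n$ and followed by a diagonal extraction, yields a subsequence and a profile $U_0 \in L^{\infty}(0,T;L^{\infty}_{\#}(0,\theta;L^p_{loc}(\R^n)))$ such that $u_\epsilon$ two-scale converges to $U_0$. The same procedure applied to $\mathbf{A}_\epsilon$, using Hypothesis~\ref{hyp_U0}, produces its two-scale limit $\bm{\mathcal{A}}_0$ already introduced before the statement.

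\textbf{Constraint equation.} Multiply \eqref{convection} by $\epsilon\,\psi(t,t/\epsilon,\mathbf{x})$ with $\psi \in \Cl_c^{\infty}([0,T)\times\R\times\R^n)$, $\theta$-periodic in $\tau$, and integrate by parts using the divergence-free hypothesis on $\mathbf{A}_\epsilon$ and $\mathbf{L}$ to shift the spatial derivatives onto $\psi$. Sending $\epsilon\to 0$, every term carrying an explicit factor of $\epsilon$ vanishes thanks to the $L^p$ bound, while two-scale convergence in the remaining terms yields
\begin{equation*}
\int_0^T\!\!\int_0^\theta\!\!\int_{\R^n} U_0\,\bigl(\D_\tau\psi + \mathbf{L}\cdot\nabla_\mathbf{x}\psi\bigr)\,d\mathbf{x}\,d\tau\,dt \;=\; 0,
\end{equation*}
i.e.\ $\D_\tau U_0 + \mathbf{L}\cdot\nabla_\mathbf{x}U_0 = 0$ distributionally. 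Integrating this equation along the $\theta$-periodic orbit $\tau\mapsto\mathbf{X}(\tau;\mathbf{y},t;0)$ shows that $\tau\mapsto U_0(t,\tau,\mathbf{X}(\tau;\mathbf{y},t;0))$ is constant; setting $V_0(t,\mathbf{y}):=U_0(t,0,\mathbf{y})$ and inverting the flow then gives \eqref{link_U0V0}.

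\textbf{Limit transport equation.} Specialize the test functions to $\psi(t,\tau,\mathbf{x}) = \phi(t,\mathbf{X}(-\tau;\mathbf{x},t;0))$ with $\phi\in\Cl_c^{\infty}([0,T)\times\R^n)$. A direct chain-rule computation shows $\D_\tau\psi + \mathbf{L}\cdot\nabla_\mathbf{x}\psi = 0$, which precisely cancels the singular $\epsilon^{-1}$ contribution. Passing to the two-scale limit in the remaining terms of the weak formulation of \eqref{convection} produces
\begin{equation*}
-\int_0^T\!\!\int_0^\theta\!\!\int_{\R^n} U_0\,\bigl(\D_t\psi + \bm{\mathcal{A}}_0\cdot\nabla_\mathbf{x}\psi\bigr)\,d\mathbf{x}\,d\tau\,dt \;=\; \int_{\R^n} u^0(\mathbf{x})\,\phi(0,\mathbf{x})\,d\mathbf{x}
\end{equation*}
(up to the normalization inherent to the definition of two-scale convergence). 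For fixed $(t,\tau)$, perform the volume-preserving change of variable $\mathbf{x} = \mathbf{X}(\tau;\mathbf{y},t;0)$, whose Jacobian equals $1$ because $\mathbf{L}$ is divergence-free. Differentiating the identity $\psi(t,\tau,\mathbf{X}(\tau;\mathbf{y},t;0))=\phi(t,\mathbf{y})$ in $t$ and $\mathbf{y}$ expresses $\D_t\psi$ and $\nabla_\mathbf{x}\psi$ at the point $\mathbf{x}=\mathbf{X}(\tau;\mathbf{y},t;0)$ in terms of $\D_t\phi,\nabla_\mathbf{y}\phi,\D_t\mathbf{X}$ and $(\nabla_\mathbf{y}\mathbf{X})^{-1}$, and the integrand collapses to $V_0(t,\mathbf{y})\bigl(\D_t\phi + \bm{\alpha}_0(t,\tau,\mathbf{y})\cdot\nabla_\mathbf{y}\phi\bigr)$ with $\bm{\alpha}_0$ exactly as defined. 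Since $V_0$ and $\phi$ do not depend on $\tau$, integrating over $\tau\in[0,\theta]$ replaces $\bm{\alpha}_0$ by $\theta\,\tilde{\mathbf{a}}_0$ and yields the weak form of \eqref{eq_V0} with initial datum $u^0$.

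\textbf{Main obstacle.} The delicate step is the two-scale passage in the bilinear product $u_\epsilon\,\mathbf{A}_\epsilon\cdot\nabla_\mathbf{x}\psi$: neither factor converges strongly, so one must promote $\bm{\mathcal{A}}_0(t,t/\epsilon,\mathbf{x})$ to an admissible test function using the $W^{1,q}$ regularity granted by Hypothesis~\ref{hyp_U0} and verify that $\mathbf{A}_\epsilon - \bm{\mathcal{A}}_0(t,t/\epsilon,\cdot)$ tends to $0$ strongly enough in some $L^r$ pairing with $u_\epsilon\,\nabla_\mathbf{x}\psi$ (the conditions $\frac{1}{p}+\frac{1}{q'}<1$ in the hypothesis are calibrated for exactly this). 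The algebraic heart of the proof is the chain-rule collapse identifying $\bm{\alpha}_0$, which explains why Hypothesis~\ref{hyp_U0} must control $\D_t\mathbf{X}$ and $\nabla_\mathbf{x}\mathbf{X}$ in the same spaces as $\mathbf{A}_\epsilon$ itself.
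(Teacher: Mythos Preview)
Your first two movements (extraction, constraint) are correct and coincide with the paper.  The third movement has a genuine gap in how you treat the bilinear term $\int u_{\epsilon}\,\mathbf{A}_{\epsilon}\cdot\nabla_{\mathbf{x}}\psi$.

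You propose to ``verify that $\mathbf{A}_{\epsilon}-\bm{\mathcal{A}}_{0}(t,t/\epsilon,\cdot)\to 0$ strongly enough in some $L^{r}$''.  But Hypothesis~\ref{hyp_U0} only gives a uniform bound on $\mathbf{A}_{\epsilon}$ in $L^{\infty}(0,T;W^{1,q}(K))$, hence \emph{weak} two-scale convergence to $\bm{\mathcal{A}}_{0}$; this does not yield $\mathbf{A}_{\epsilon}-\bm{\mathcal{A}}_{0}(t,t/\epsilon,\cdot)\to 0$ strongly in any Lebesgue space.  Since $u_{\epsilon}$ also converges only weakly (two-scale), the product of two merely two-scale convergent factors cannot be passed to the limit by the mechanism you describe, and the exponent conditions in Hypothesis~\ref{hyp_U0} are not ``calibrated'' for that purpose.

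The paper fills this gap by a different device: it performs the change of variables \emph{at the level of the sequence}, setting $h_{\epsilon}(t,\mathbf{y})=u_{\epsilon}\bigl(t,\mathbf{X}(t/\epsilon;\mathbf{y},t;0)\bigr)$ (the case $f_{\epsilon}=0$ of Theorem~\ref{TSCV_g}).  The equation satisfied by $h_{\epsilon}$ has no singular term, so $\D_{t}h_{\epsilon}$ is bounded in $L^{\infty}(0,T;(W_{0}^{1,s'}(K))')$ or $L^{\infty}(0,T;(W_{0}^{1,q}(K))')$.  Aubin--Lions then upgrades the convergence of $h_{\epsilon}$ to \emph{strong} convergence in $L^{\infty}(0,T;(W_{0}^{1,q}(K))')$ (Lemma~\ref{strongCV_heps}).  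With one factor strongly convergent, the product $h_{\epsilon}\,\tilde{\mathbf{A}}_{\epsilon}\cdot\nabla_{\mathbf{x}}\psi$ passes to the limit: write it as $(h_{\epsilon}-H)\,\tilde{\mathbf{A}}_{\epsilon}\cdot\nabla_{\mathbf{x}}\psi + H\,\tilde{\mathbf{A}}_{\epsilon}\cdot\nabla_{\mathbf{x}}\psi$; the first piece vanishes by the strong convergence paired against the uniform $W^{1,q}$ bound on $\tilde{\mathbf{A}}_{\epsilon}\cdot\nabla_{\mathbf{x}}\psi$, and in the second $H$ is fixed so two-scale convergence of $\tilde{\mathbf{A}}_{\epsilon}$ suffices.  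In short, rather than trying to strengthen the convergence of $\mathbf{A}_{\epsilon}$, the paper strengthens the convergence of the solution via a filtering change of variables plus time-compactness, and this is the missing idea in your proposal.
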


\begin{theorem} \label{eq_U0}
$U_{0}$ satisfies the following equation:
\begin{equation*}
\D_{t}U_{0}(t,\tau,\mathbf{x}) + \mathbf{a}_{0}(t,\tau,\mathbf{x}) \cdot \nabla_{\mathbf{x}}U_{0}(t,\tau,\mathbf{x}) = 0 \, ,
\end{equation*}
with $\mathbf{a}_{0}$ defined by
\begin{equation*}
\begin{split}
\mathbf{a}_{0}(t,\tau,\mathbf{x}) = \left((\nabla_{\mathbf{x}}\mathbf{X})(-\tau;\mathbf{x},t;0)\right)^{-1}\left(\tilde{\mathbf{a}}_{0}\left(t,\mathbf{X}(-\tau;\mathbf{x},t;0)\right) - (\D_{t}\mathbf{X})(-\tau;\mathbf{x},t;0) \right) \, .
\end{split}
\end{equation*}
\end{theorem}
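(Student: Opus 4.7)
The plan is to derive the PDE for $U_{0}$ by a direct chain-rule computation, using the relation \eqref{link_U0V0} together with the transport equation \eqref{eq_V0}. Introduce the shorthand $\mathbf{Y}(t,\tau,\mathbf{x}) := \mathbf{X}(-\tau;\mathbf{x},t;0)$, so that $U_{0}(t,\tau,\mathbf{x}) = V_{0}(t,\mathbf{Y}(t,\tau,\mathbf{x}))$. Applying the chain rule formally gives
\begin{equation*}
\D_{t} U_{0} = (\D_{t} V_{0})(t,\mathbf{Y}) + \D_{t} \mathbf{Y}\cdot(\nabla V_{0})(t,\mathbf{Y}), \qquad \nabla_{\mathbf{x}} U_{0} = (\nabla_{\mathbf{x}} \mathbf{Y})^{T}\,(\nabla V_{0})(t,\mathbf{Y}).
\end{equation*}
Using \eqref{eq_V0} evaluated at $(t,\mathbf{Y})$ to replace $(\D_{t} V_{0})(t,\mathbf{Y})$ by $-\tilde{\mathbf{a}}_{0}(t,\mathbf{Y})\cdot(\nabla V_{0})(t,\mathbf{Y})$, together with the elementary identity $\mathbf{a}_{0}\cdot(\nabla_{\mathbf{x}} \mathbf{Y})^{T}\mathbf{w} = ((\nabla_{\mathbf{x}}\mathbf{Y})\,\mathbf{a}_{0})\cdot \mathbf{w}$, the sum $\D_{t} U_{0} + \mathbf{a}_{0}\cdot \nabla_{\mathbf{x}} U_{0}$ rewrites as
\begin{equation*}
\bigl[-\tilde{\mathbf{a}}_{0}(t,\mathbf{Y}) + \D_{t} \mathbf{Y} + (\nabla_{\mathbf{x}} \mathbf{Y})\,\mathbf{a}_{0}\bigr]\cdot(\nabla V_{0})(t,\mathbf{Y}).
\end{equation*}
The bracketed vector vanishes exactly when $\mathbf{a}_{0} = (\nabla_{\mathbf{x}}\mathbf{Y})^{-1}\bigl(\tilde{\mathbf{a}}_{0}(t,\mathbf{Y}) - \D_{t} \mathbf{Y}\bigr)$, which on unfolding $\mathbf{Y}$ and observing that $\D_{t}\mathbf{Y} = (\D_{t}\mathbf{X})(-\tau;\mathbf{x},t;0)$ is precisely the formula in the statement.

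Two points deserve care. The invertibility of $(\nabla_{\mathbf{x}}\mathbf{X})(-\tau;\mathbf{x},t;0)$ is automatic: under the smoothness required by Hypothesis \ref{hyp_U0} the map $\mathbf{x}\mapsto \mathbf{X}(-\tau;\mathbf{x},t;0)$ is a diffeomorphism with inverse $\mathbf{y}\mapsto \mathbf{X}(0;\mathbf{y},t;-\tau)$, and the divergence-free condition $\nabla_{\mathbf{x}}\cdot\mathbf{L}=0$ in fact forces the Jacobian determinant to equal $1$. The chain-rule step, by contrast, is not available pointwise since $V_{0}$ only lies in $L^{\infty}(0,T;L_{loc}^{p}(\R^{n}))$; I would therefore test \eqref{eq_V0} against smooth compactly supported functions of the form $\varphi(t,\tau,\mathbf{X}(\tau;\mathbf{y},t;0))$ and carry out the change of variable $\mathbf{y} = \mathbf{X}(\tau;\mathbf{x},t;0)$ in the resulting integral identity, converting the weak formulation of \eqref{eq_V0} into the distributional form of the claimed transport equation for $U_{0}$. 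The algebraic identity being immediate, the main obstacle is the bookkeeping of this change of variable and the verification that the composed test functions lie in the admissible class — here one uses the regularity of $\D_{t}\mathbf{X}$ and $\nabla_{\mathbf{x}}\mathbf{X}$ provided by Hypothesis \ref{hyp_U0}, together with the unit-Jacobian property, to ensure both the well-posedness of the pullback and the preservation of integrability.
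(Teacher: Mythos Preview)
Your argument is correct and is essentially identical to the paper's own proof: the paper simply differentiates the relation \eqref{link_U0V0} in $t$ and $\mathbf{x}$ via the chain rule, substitutes $(\D_{t}V_{0})(t,\mathbf{X}(-\tau;\mathbf{x},t;0)) = -\tilde{\mathbf{a}}_{0}(t,\mathbf{X}(-\tau;\mathbf{x},t;0))\cdot(\nabla_{\mathbf{x}}V_{0})(t,\mathbf{X}(-\tau;\mathbf{x},t;0))$ from \eqref{eq_V0}, and reads off the claimed transport equation. Your additional remarks on Jacobian invertibility and on justifying the chain rule distributionally go beyond what the paper writes (it performs the computation purely formally), but they are appropriate and do not change the route.
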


\subsubsection{Two-scale convergence at $k$-th order} \label{TSCV_Uk}

We fix $k \in \N^{*}$ and we aim to identify the $k$-th term of the expansion \eqref{expansion}. Before stating the result, we need additional assumptions besides Hypotheses \ref{hyp_U0}: 

\begin{hypothesis} \label{Hilbert_Aeps}
Defining the sequence $(\mathbf{A}_{\epsilon,i})_{\epsilon\,>\,0}$ as
\begin{equation*}
\left\{
\begin{array}{ll}
\mathbf{A}_{\epsilon,i}(t,\mathbf{x}) = \cfrac{1}{\epsilon}\,\left(\mathbf{A}_{\epsilon,i-1}(t,\mathbf{x}) - \bm{\mathcal{A}}_{i-1}\left(t,\cfrac{t}{\epsilon},\mathbf{x}\right) \right) \, , & \forall\,i=1,\dots,k \, , \\
\mathbf{A}_{\epsilon,0}(t,\mathbf{x}) = \mathbf{A}_{\epsilon}(t,\mathbf{x}) \, ,
\end{array}
\right.
\end{equation*}
we assume that, for all $i=0,\dots,k$, $(\mathbf{A}_{\epsilon,i})_{\epsilon\,>\,0}$ two-scale converges to the profile $\bm{\mathcal{A}}_{i} = \bm{\mathcal{A}}_{i}(t,\tau,\mathbf{x})$ in $\left(L^{\infty}\left(0,T;L_{\#}^{\infty}\left(0,\theta;W^{1,q}(K)\right)\right)\right)^{n}$ for any compact subset $K \subset \R^{n}$.
\end{hypothesis} 

\begin{hypothesis} \label{Hilbert_ueps_kth}
Defining the sequence $(u_{\epsilon,i})_{\epsilon\,>\,0}$ as
\begin{equation*} \label{def_uepsi}
\left\{
\begin{array}{ll}
u_{\epsilon,i}(t,\mathbf{x}) = \cfrac{1}{\epsilon}\,\left(u_{\epsilon,i-1}(t,\mathbf{x}) - U_{i-1}\left(t,\cfrac{t}{\epsilon},\mathbf{x}\right) \right) \, , & \forall\, i=1,\dots,k-1 \, , \\
u_{\epsilon,0}(t,\mathbf{x}) = u_{\epsilon}(t,\mathbf{x}) \, ,
\end{array}
\right.
\end{equation*}
we assume that, for all $i=0,\dots,k-1$ and up to a subsequence, $(u_{\epsilon,i})_{\epsilon\,>\,0}$ two-scale converges to the profile $U_{i} = U_{i}(t,\tau,\mathbf{x})$ in $L^{\infty}\left(0,T;L_{\#}^{\infty}\left(0,\theta;L^{p}(\R^{n})\right)\right)$.
\end{hypothesis}

Under these hypotheses, we define $\bm{\alpha}_{i}$, $\tilde{\mathbf{a}}_{i}$ and $\mathbf{a}_{i}$ as
\begin{equation*}
\bm{\alpha}_{i}(t,\tau,\mathbf{x}) = \left((\nabla_{\mathbf{x}}\mathbf{X})(\tau;\mathbf{x},t;0)\right)^{-1}\,\bm{\mathcal{A}}_{i}\left(t,\tau,\mathbf{X}(\tau;\mathbf{x},t;0)\right) \, ,
\end{equation*}
\begin{equation*}
\tilde{\mathbf{a}}_{i}(t,\mathbf{x}) = \cfrac{1}{\theta}\, \int_{0}^{\theta}\bm{\alpha}_{i}(t,\tau,\mathbf{x}) \, d\tau \, ,
\end{equation*}
\begin{equation*}
\begin{split}
\mathbf{a}_{i}(t,\tau,\mathbf{x}) &= \cfrac{1}{\theta}\,\int_{0}^{\theta} \left((\nabla_{\mathbf{x}}\mathbf{X})(\sigma-\tau;\mathbf{x},t;0)\right)^{-1}\,\bm{\mathcal{A}}_{i}\left(t,\sigma,\mathbf{X}(\sigma-\tau;\mathbf{x},t;0)\right) \, d\sigma \\
&= \cfrac{1}{\theta}\, \int_{0}^{\theta} \left((\nabla_{\mathbf{x}}\mathbf{X})(-\tau;\mathbf{x},t;0)\right)^{-1}\,\bm{\alpha}_{i}\left(t,\sigma,\mathbf{X}(-\tau;\mathbf{x},t;0)\right) \, d\sigma \, ,
\end{split}
\end{equation*}
for all $i=1,\dots,k-1$. We also define recursively the functions $W_{1},\dots,W_{k}$ and $R_{1},\dots,R_{k-1}$ as
\begin{equation} \label{def_Wi}
W_{i}(t,\tau,\mathbf{x}) = \int_{0}^{\tau} \left(\sum_{j\,=\,0}^{i-1}(\mathbf{a}_{j}-\bm{\mathcal{A}}_{j})\cdot\nabla_{\mathbf{x}}U_{i-1-j} - R_{i-1}\right)\left(t,\sigma,\mathbf{X}(\sigma;\mathbf{x},t;0)\right) \, d\sigma \, ,
\end{equation}
\begin{equation} \label{def_Ri}
\begin{split}
R_{i}(t,\tau,\mathbf{x}) 
&= (\D_{t}W_{i})\left(t,\tau,\mathbf{X}(-\tau;\mathbf{x},t;0)\right) -\cfrac{1}{\theta}\int_{0}^{\theta}(\D_{t}W_{i})\left(t,\sigma,\mathbf{X}(-\tau;\mathbf{x},t;0)\right)\, d\sigma \\
&\qquad + \sum_{j\,=\,0}^{i}\left(\tilde{\mathbf{a}}_{j}\cdot\nabla_{\mathbf{x}}W_{i-j}\right)\left(t,\tau,\mathbf{X}(-\tau;\mathbf{x},t;0)\right) \\
&\qquad - \cfrac{1}{\theta}\sum_{j\,=\,0}^{i} \int_{0}^{\theta} \left(\bm{\alpha}_{j}\cdot\nabla_{\mathbf{x}}W_{i-j}\right) \left(t,\sigma,\mathbf{X}(-\tau;\mathbf{x},t;0)\right)\,d\sigma \, ,
\end{split}
\end{equation}
with the convention $W_{0} = R_{0} = 0$. Having these notations and assumptions in hands, we can now state a two-scale convergence result at $k$-th order by proceeding recursively:

\begin{theorem} \label{CV_Uk}
We define $s' > 0$ such that $\frac{1}{s'} = 1-\frac{1}{q}-\frac{1}{r}$ with $r \in [1,\frac{nq}{n-q}[$ and we define the functional space $X^{s'}(K) = \left(W^{1,q}(K)\right)' \cup \left(W^{1,s'}(K)\right)'$. We assume that Hypotheses \ref{hyp_U0}-\ref{Hilbert_Aeps}-\ref{Hilbert_ueps_kth} are satisfied and that, for any $K \subset \R^{n}$ compact, 
\begin{itemize}
\item $W_{k}$ is in $L^{\infty}\left(0,T;L_{\#}^{\infty}\left(0,\theta;L^{p}(K)\right)\right)$,
\item $\D_{t}W_{k}$ is in $L^{\infty}\left(0,T;L_{\#}^{\infty}\left(0,\theta;X^{s'}(K)\right)\right)$,
\item $R_{k-1}$ is in $L^{\infty}\left(0,T;L_{\#}^{\infty}\left(0,\theta;X^{s'}(K)\right)\right)$.
\end{itemize}
Then, if the sequence $(u_{\epsilon,k})_{\epsilon\,>\,0}$ defined by 
\begin{equation*}
u_{\epsilon,k}(t,\mathbf{x}) = \cfrac{1}{\epsilon}\,\left(u_{\epsilon,k-1}(t,\mathbf{x})-U_{k-1}\left(t,\cfrac{t}{\epsilon},\mathbf{x}\right)\right)\,,
\end{equation*}
is bounded independently of $\epsilon$ in $L^{\infty}\left(0,T;L_{loc}^{p}(\R^{n})\right)$, $u_{\epsilon,k}$ two-scale converges to the profile $U_{k}$ in $\left(0,T;L_{\#}^{\infty}\left(0,\theta;L^{p}(\R^{n})\right)\right)$ characterized as follows:
\begin{equation}
U_{k}(t,\tau,\mathbf{x}) = V_{k}\left(t,\mathbf{X}(-\tau;\mathbf{x},t;0)\right) + W_{k}\left(t,\tau,\mathbf{X}(-\tau;\mathbf{x},t;0)\right) \, ,
\end{equation}
where $W_{k} = W_{k}(t,\tau,\mathbf{x})$ is defined in \eqref{def_Wi} and where $V_{k} = V_{k}(t,\mathbf{x})$ satisfies
\begin{equation} \label{eq_Vk}
\left\{
\begin{array}{l}
\begin{split}
\D_{t}V_{k}(t,\mathbf{x}) &+ \tilde{\mathbf{a}}_{0}(t,\mathbf{x}) \cdot \nabla_{\mathbf{x}}V_{k}(t,\mathbf{x}) \\
&= -\cfrac{1}{\theta}\,\int_{0}^{\theta} (\D_{t}W_{k}+\bm{\alpha}_{0}\cdot\nabla_{\mathbf{x}}W_{k})(t,\sigma,\mathbf{x}) \, d\sigma \\
&\qquad - \sum_{i\,=\,1}^{k}\Bigg[ \cfrac{1}{\theta}\,\int_{0}^{\theta} \bm{\alpha}_{i}(t,\sigma,\mathbf{x}) \cdot \left[ \nabla_{\mathbf{x}}V_{k-i}(t,\mathbf{x})+\nabla_{\mathbf{x}}W_{k-i}(t,\sigma,\mathbf{x})\right] \, d\sigma \Bigg] \, ,
\end{split}
\\
V_{k}(t=0,\mathbf{x}) = 0 \, .
\end{array}
\right.
\end{equation}
\end{theorem}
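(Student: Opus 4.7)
The plan is to proceed by induction on $k$, mimicking the scheme of Theorem~\ref{def_U0} but with a non-trivial forcing built from the previously identified profiles $U_0,\dots,U_{k-1}$ and from the two-scale corrections $\bm{\mathcal{A}}_1,\dots,\bm{\mathcal{A}}_k$. Assume the statements at orders $0,\dots,k-1$ have been established. The first step is to write
\[u_\epsilon(t,\mathbf{x})=\sum_{i=0}^{k-1}\epsilon^{i}\,U_i\!\left(t,\tfrac{t}{\epsilon},\mathbf{x}\right)+\epsilon^{k}u_{\epsilon,k}(t,\mathbf{x}),\]
decompose $\mathbf{A}_\epsilon$ accordingly via Hypothesis~\ref{Hilbert_Aeps}, inject both in \eqref{convection}, and regroup powers of $\epsilon$. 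The equations previously obtained for the $U_i$ and the construction of $W_0,\dots,W_{k-1}$ and $R_0,\dots,R_{k-1}$ are designed so that almost every term cancels; what remains is a convection equation
\[\D_t u_{\epsilon,k}+\mathbf{A}_\epsilon\cdot\nabla_\mathbf{x}u_{\epsilon,k}+\tfrac{1}{\epsilon}\,\mathbf{L}\!\left(t,\tfrac{t}{\epsilon},\mathbf{x}\right)\cdot\nabla_\mathbf{x}u_{\epsilon,k}=S_{\epsilon,k}\!\left(t,\tfrac{t}{\epsilon},\mathbf{x}\right),\]
with $S_{\epsilon,k}$ an explicit combination of $\D_t W_{k-1}$, $R_{k-1}$, and products $\bm{\mathcal{A}}_j\cdot\nabla_\mathbf{x}U_{k-1-j}$ which, by the assumed regularity, is uniformly bounded in $L^\infty(0,T;L_{\#}^\infty(0,\theta;X^{s'}(K)))$. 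Combined with the prescribed boundedness of $u_{\epsilon,k}$, Theorem~\ref{TSCV_Allaire} extracts a two-scale limit $U_k\in L^\infty(0,T;L_{\#}^\infty(0,\theta;L^p(\R^n)))$ along a subsequence.

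The second step identifies $U_k$ and produces the decomposition $U_k=V_k+W_k$. Testing the equation for $u_{\epsilon,k}$ against $\epsilon\,\psi(t,t/\epsilon,\mathbf{x})$ with a smooth compactly supported $\psi$ kills the $\D_t$ and $\mathbf{A}_\epsilon\cdot\nabla_\mathbf{x}$ contributions in the limit and yields the fast-time transport equation
\[\D_\tau U_k(t,\tau,\mathbf{x})+\mathbf{L}(t,\tau,\mathbf{x})\cdot\nabla_\mathbf{x}U_k(t,\tau,\mathbf{x})=\Phi_k(t,\tau,\mathbf{x}),\]
where $\Phi_k$ is the two-scale limit of $S_{\epsilon,k}$ and is an explicit functional of $U_0,\dots,U_{k-1}$ and $\bm{\mathcal{A}}_0,\dots,\bm{\mathcal{A}}_k$. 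Following the straightening of characteristics used in Theorem~\ref{def_U0}, the change of variable $\mathbf{y}=\mathbf{X}(-\tau;\mathbf{x},t;0)$ reduces this to $\D_\tau\widetilde{U}_k=\widetilde{\Phi}_k$, and $\tau$-integration produces precisely the corrector $W_k$ from \eqref{def_Wi} plus a $\tau$-independent function $V_k(t,\mathbf{y})$, so that
\[U_k(t,\tau,\mathbf{x})=V_k\!\left(t,\mathbf{X}(-\tau;\mathbf{x},t;0)\right)+W_k\!\left(t,\tau,\mathbf{X}(-\tau;\mathbf{x},t;0)\right).\]
The $\theta$-periodicity of $W_k$ in $\tau$ is the solvability condition of this primitivation and is enforced exactly by the mean-subtraction built into $R_{k-1}$, which explains the appearance of $R_{k-1}$ in \eqref{def_Wi}. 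Testing now against a $\tau$-independent smooth function and averaging the resulting limit equation in $\tau$ produces the $\epsilon^0$-solvability condition, which after substituting the definitions of $\bm{\alpha}_j$, $\tilde{\mathbf{a}}_j$ and $\mathbf{a}_j$ becomes exactly \eqref{eq_Vk}; the initial condition $V_k(0,\mathbf{x})=0$ follows from $u_{\epsilon,k}(0,\cdot)=0$ (for $k\geq 1$) and $W_k(t=0,\tau=0,\mathbf{x})=0$.

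The main obstacle I foresee is not the algebraic expansion, which is a bookkeeping exercise already rehearsed in \cite{Two-scale_expansion}, but the rigorous passage to the two-scale limit in the products $\mathbf{A}_{\epsilon,j}\cdot\nabla_\mathbf{x}U_{k-1-j}(t,t/\epsilon,\mathbf{x})$: these pair a weakly two-scale convergent coefficient with an oscillating profile of only $L^p$ regularity, so one factor must converge in a strong two-scale sense. This is why the hypotheses impose $W^{1,q}$ regularity on the $\bm{\mathcal{A}}_j$, why the Hölder--Sobolev constraint $\tfrac{1}{p}+\tfrac{1}{q'}<1$ with $\tfrac{1}{q'}=\max(\tfrac{1}{q}-\tfrac{1}{n},0)$ is needed to make the products integrable on compacts, and why the dual space $X^{s'}(K)=(W^{1,q}(K))'\cup(W^{1,s'}(K))'$ is introduced to host $R_{k-1}$, $\D_t W_k$ and $S_{\epsilon,k}$ simultaneously, so that the duality bracket used in the two-scale limit is well defined at each step of the induction.
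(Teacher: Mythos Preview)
Your overall architecture is correct and matches the paper's: derive the equation for $u_{\epsilon,k}$, extract a two-scale limit, obtain the fast-time constraint $\partial_\tau U_k+\mathbf{L}\cdot\nabla_\mathbf{x}U_k=\Phi_k$, straighten characteristics to get $U_k=V_k(\cdot,\mathbf{X}(-\tau;\cdot))+W_k(\cdot,\tau,\mathbf{X}(-\tau;\cdot))$, and finally identify the equation for $V_k$. The verification that $W_k$ is $\theta$-periodic via the mean-subtraction in $R_{k-1}$ is also the paper's mechanism.

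There is, however, a genuine gap in your last step. You write that ``testing against a $\tau$-independent smooth function and averaging the resulting limit equation in $\tau$'' yields \eqref{eq_Vk}. But if you test the equation for $u_{\epsilon,k}$ against $\psi(t,\mathbf{x})$, the singular term $\epsilon^{-1}\mathbf{L}(t,t/\epsilon,\mathbf{x})\cdot\nabla_\mathbf{x}u_{\epsilon,k}$ does not disappear, and you cannot pass to the limit. You also misidentify the hard product: the terms $\mathbf{A}_{\epsilon,j}\cdot\nabla_\mathbf{x}U_{k-1-j}(t,t/\epsilon,\mathbf{x})$ are benign, since $U_{k-1-j}$ is a \emph{known} admissible oscillating profile and standard two-scale convergence handles the pairing. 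The actual obstruction is the product of the two \emph{unknown} weakly convergent objects, morally $\mathbf{A}_\epsilon$ against $u_{\epsilon,k}$, once the fast oscillation has been removed.

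The paper resolves this by a filtering-plus-compactness argument you do not mention. It introduces the filtered unknown
\[
h_\epsilon(t,\mathbf{x})=u_{\epsilon,k}\!\left(t,\mathbf{X}\!\left(\tfrac{t}{\epsilon};\mathbf{x},t;0\right)\right)-W_k\!\left(t,\tfrac{t}{\epsilon},\mathbf{x}\right),
\]
shows that $h_\epsilon$ satisfies a non-singular transport equation with coefficient $\tilde{\mathbf{A}}_\epsilon$ and right-hand side bounded in $L^\infty(0,T;X^{s'}(K))$, and then proves (Lemma~\ref{strongCV_heps}) via the Aubin--Lions lemma that $h_\epsilon\to H=V_k$ \emph{strongly} in $L^\infty(0,T;(W_0^{1,q}(K))')$. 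This strong convergence is precisely what allows one to pass to the limit in the product $h_\epsilon\,\tilde{\mathbf{A}}_\epsilon\cdot\nabla_\mathbf{x}\psi$ and obtain \eqref{eq_Vk}. The role of $X^{s'}(K)$ and of the hypotheses on $\partial_t W_k$ and $R_{k-1}$ is exactly to place $\partial_t h_\epsilon$ in a fixed negative Sobolev space so that Aubin--Lions applies; your proposal invokes these hypotheses but does not use them for their intended purpose. Without this step the derivation of \eqref{eq_Vk} remains formal.
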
 

\begin{theorem} \label{cor_eq_Uk}
$U_{k}$ satisfies the following equation:
\begin{equation*}
\begin{split}
\D_{t}U_{k}(t,\tau,\mathbf{x}) + \mathbf{a}_{0}(t,\tau,\mathbf{x}) \cdot & \nabla_{\mathbf{x}}U_{k}(t,\tau,\mathbf{x}) = R_{k}(t,\tau,\mathbf{x}) - \sum_{i\,=\,1}^{k}\mathbf{a}_{i}(t,\tau,\mathbf{x})\cdot\nabla_{\mathbf{x}}U_{k-i}(t,\tau,\mathbf{x}) \, ,
\end{split}
\end{equation*}
where $R_{k}$ is obtained from the definition \eqref{def_Ri} extended to the case $i=k$.
\end{theorem}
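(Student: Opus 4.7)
The plan is to differentiate the explicit characterization of $U_k$ provided by Theorem \ref{CV_Uk} and to identify the resulting expression with the right-hand side of the claimed equation, in the same spirit as Theorem \ref{eq_U0} follows from \eqref{eq_V0}. Write $\mathbf{Y} := \mathbf{X}(-\tau;\mathbf{x},t;0)$ and $J := (\nabla_{\mathbf{x}}\mathbf{X})(-\tau;\mathbf{x},t;0)$, so that
\begin{equation*}
U_k(t,\tau,\mathbf{x}) = V_k(t,\mathbf{Y}) + W_k(t,\tau,\mathbf{Y}),
\end{equation*}
and apply to this decomposition the operators $\D_{t} + \mathbf{a}_0\cdot\nabla_{\mathbf{x}}$ and $\mathbf{a}_i\cdot\nabla_{\mathbf{x}}$ for $i=1,\dots,k$.

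The core chain-rule identity is that, for any sufficiently smooth $f=f(t,\tau,\mathbf{y})$, one has
\begin{equation*}
\D_{t}[f(t,\tau,\mathbf{Y})] + \mathbf{a}_0(t,\tau,\mathbf{x})\cdot\nabla_{\mathbf{x}}[f(t,\tau,\mathbf{Y})] = \bigl(\D_{t} f + \tilde{\mathbf{a}}_0(t,\mathbf{Y})\cdot\nabla f\bigr)(t,\tau,\mathbf{Y}),
\end{equation*}
and, for $i\geq 1$,
\begin{equation*}
\mathbf{a}_i(t,\tau,\mathbf{x})\cdot\nabla_{\mathbf{x}}[f(t,\tau,\mathbf{Y})] = \bigl(\tilde{\mathbf{a}}_i(t,\mathbf{Y})\cdot\nabla f\bigr)(t,\tau,\mathbf{Y}).
\end{equation*}
The first identity follows from the factorisation $\mathbf{a}_0 = J^{-1}\bigl(\tilde{\mathbf{a}}_0(t,\mathbf{Y}) - \D_{t}\mathbf{Y}\bigr)$: the $\D_{t}\mathbf{Y}$ term produced by the chain rule on $\D_{t}[f(t,\tau,\mathbf{Y})]$ cancels the $-\D_{t}\mathbf{Y}$ hidden in $\mathbf{a}_0$. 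The second identity is precisely the content of the second expression given for $\mathbf{a}_i$ in the definitions preceding Theorem \ref{CV_Uk}, together with $\frac{1}{\theta}\int_{0}^{\theta}\bm{\alpha}_i(t,\sigma,\mathbf{Y})\,d\sigma = \tilde{\mathbf{a}}_i(t,\mathbf{Y})$.

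Applying these identities to $V_k(t,\mathbf{Y})$ and $W_k(t,\tau,\mathbf{Y})$ separately, and recalling that $V_{k-i}$ does not depend on $\tau$, one obtains
\begin{equation*}
\begin{split}
\D_{t}U_k + \sum_{j=0}^{k}\mathbf{a}_j\cdot\nabla_{\mathbf{x}}U_{k-j} &= \bigl(\D_{t}V_k + \tilde{\mathbf{a}}_0\cdot\nabla V_k\bigr)(t,\mathbf{Y}) + \bigl(\D_{t}W_k + \tilde{\mathbf{a}}_0\cdot\nabla W_k\bigr)(t,\tau,\mathbf{Y}) \\
&\quad + \sum_{i=1}^{k}\tilde{\mathbf{a}}_i(t,\mathbf{Y})\cdot\bigl[\nabla V_{k-i}(t,\mathbf{Y}) + \nabla W_{k-i}(t,\tau,\mathbf{Y})\bigr].
\end{split}
\end{equation*}
Substituting equation \eqref{eq_Vk} (evaluated at $\mathbf{y}=\mathbf{Y}$) for the first parenthesis and using once more $\tilde{\mathbf{a}}_i(t,\mathbf{Y}) = \frac{1}{\theta}\int_0^\theta \bm{\alpha}_i(t,\sigma,\mathbf{Y})\,d\sigma$, the $\nabla V_{k-i}$ contributions cancel in pairs. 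What is left coincides term by term with the four-block structure of the right-hand side of \eqref{def_Ri} extended to $i=k$, that is with $R_k(t,\tau,\mathbf{x})$; moving the $i\geq 1$ terms to the right-hand side then yields the claim.

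The argument is essentially algebraic, and the main obstacle is purely book-keeping: matching the inner compositions with $\mathbf{Y}$ across all terms, keeping the role of the fixed $\tau$ distinct from the dummy $\sigma$ in the periodic averages, and confirming the cancellation of the $\nabla V_{k-i}$ pieces between \eqref{eq_Vk} and the $\mathbf{a}_i\cdot\nabla_{\mathbf{x}}U_{k-i}$ contributions. At the functional level, the chain rule is only applied to $V_k$, $W_k$ and their first-order spatial and temporal derivatives, all of which are controlled by the integrability hypotheses of Theorem \ref{CV_Uk}, so no additional regularity is required.
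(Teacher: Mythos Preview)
Your proof is correct and follows exactly the approach the paper takes for Theorem \ref{eq_U0}: differentiate the characterization $U_{k}(t,\tau,\mathbf{x}) = V_{k}(t,\mathbf{Y}) + W_{k}(t,\tau,\mathbf{Y})$ via the chain rule, use the factorisations $\mathbf{a}_{0}=J^{-1}(\tilde{\mathbf{a}}_{0}(t,\mathbf{Y})-\D_{t}\mathbf{Y})$ and $\mathbf{a}_{i}=J^{-1}\tilde{\mathbf{a}}_{i}(t,\mathbf{Y})$, and substitute \eqref{eq_Vk}. The paper in fact only carries out this computation explicitly for $k=0$ and leaves the general case to the reader, so your write-up (including the observation that the $\nabla V_{k-i}$ contributions from \eqref{eq_Vk} and from $\mathbf{a}_{i}\cdot\nabla_{\mathbf{x}}U_{k-i}$ cancel) is precisely what is needed to complete the argument.
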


We can remark that the statements of Theorems \ref{CV_Uk} and \ref{cor_eq_Uk} can be viewed as improvements of the contents of \cite{Two-scale_expansion}. Indeed, assuming that $\mathbf{A}_{\epsilon}$ does not depend on $\epsilon$ and that $\mathbf{L}$ only depends on $t$ and $\mathbf{x}$ leads to 
\begin{equation*}
\mathbf{A}_{\epsilon,i}(t,\mathbf{x}) = \left\{
\begin{array}{ll}
\mathbf{A}(t,\mathbf{x}) \, , & \textnormal{if $i = 0$,} \\
0 \, , & \textnormal{else,}
\end{array}
\right.
\end{equation*}
so $\bm{\alpha}_{i} = \mathbf{a}_{i} = \tilde{\mathbf{a}}_{i} = 0$ for any $i > 0$. Consequently, the expression of $R_{i}$ and $W_{i}$ is reduced to
\begin{equation*}
\begin{split}
R_{i}(t,\tau,\mathbf{x}) 
&= \D_{t}U_{i}(t,\tau,\mathbf{x}) + \mathbf{a}_{0}(t,\tau,\mathbf{x}) \cdot \nabla_{\mathbf{x}}U_{i}(t,\tau,\mathbf{x}) \, ,
\end{split}
\end{equation*}
\begin{equation*}
W_{i}(t,\tau,\mathbf{x}) = \int_{0}^{\tau} \left(\D_{t}U_{i-1}+\mathbf{A}\cdot\nabla_{\mathbf{x}}U_{i-1}\right)\left(t,\sigma,\mathbf{X}(\sigma;\mathbf{x},t;0)\right) \, d\sigma \, .
\end{equation*}
Finally, the transport equation for $V_{i}$ is reduced to
\begin{equation*}
\D_{t}V_{i}(t,\mathbf{x}) + \tilde{\mathbf{a}}_{0}(t,\mathbf{x}) \cdot \nabla_{\mathbf{x}}V_{i}(t,\mathbf{x}) = -\cfrac{1}{\theta}\,\int_{0}^{\theta} (\D_{t}W_{i}+\bm{\alpha}_{0}\cdot\nabla_{\mathbf{x}}W_{i})(t,\sigma,\mathbf{x}) \, d\sigma \, ,
\end{equation*}
for any $i \geq 0$.

\subsection{Application to the Guiding-Center regime}

We now apply the results above to the case of the following linear Vlasov equation:
\begin{equation*}
\left\{
\begin{array}{l}
\displaystyle \D_{t}f_{\epsilon}+\mathbf{v}\cdot\nabla_{\mathbf{x}}f_{\epsilon} + \left(\mathbf{E}_{\epsilon} + \mathbf{v} \times \mathbf{B}_{\epsilon} + \cfrac{\mathbf{v} \times \bm{\beta}_{\epsilon}}{\epsilon} \right) \cdot \nabla_{\mathbf{v}}f_{\epsilon} = 0 \, , \\
f_{\epsilon}(t=0,\mathbf{x},\mathbf{v}) = f^{0}(\mathbf{x},\mathbf{v}) \, .
\end{array}
\right.
\end{equation*}
In this equation, $t \in [0,T]$ is the dimensionless time variable, $\mathbf{x} \in \R^{3}$ is the dimensionless space variable, $\mathbf{v} \in \R^{3}$ is the dimensionless velocity variable, $f_{\epsilon} = f_{\epsilon}(t,\mathbf{x},\mathbf{v}) \in \R$ is the unknown distribution function, $\mathbf{E}_{\epsilon} = \mathbf{E}_{\epsilon}(t,\mathbf{x}) \in \R^{3}$ and $\mathbf{B}_{\epsilon} = \mathbf{B}_{\epsilon}(t,\mathbf{x}) \in \R^{3}$ are the given electric and magnetic fields, $f^{0} = f^{0}(\mathbf{x},\mathbf{v}) \in \R$ is the given initial distribution. We finally assume from now that the vector function $\bm{\beta}_{\epsilon}$ is of the form
\begin{equation*}
\bm{\beta}_{\epsilon}(t,\mathbf{x}) = \bm{\beta}\left(t,\cfrac{t}{\epsilon},\mathbf{x}\right) \, ,
\end{equation*}
where $\bm{\beta} : [0,T] \times \R \times \R^{3} \to \R^{3}$ is a given function assumed to be $\theta$-periodic and continuous in $\tau$ with $\theta > 0$ fixed. \\
\indent Before going further, we introduce additional objects linked to $\bm{\beta}$. First, let $\tilde{\bm{\beta}}$ be defined such that $\D_{\tau}\tilde{\bm{\beta}} = \bm{\beta}$ and $\tilde{\bm{\beta}}(t,0,\mathbf{x}) = 0$ for any $t,\mathbf{x}$. Second, we define the matrix $\tilde{\mathfrak{B}} = \tilde{\mathfrak{B}}(t,\tau,\mathbf{x})$ such that $\tilde{\mathfrak{B}}(t,\tau,\mathbf{x})\, \mathbf{v} = \mathbf{v} \times \tilde{\bm{\beta}}(t,\tau,\mathbf{x})$ for any $t,\tau,\mathbf{x},\mathbf{v}$. Finally, we define $\mathcal{R} = \mathcal{R}(t,\tau,\mathbf{x}) = \exp\left(\tilde{\mathfrak{B}}(t,\tau,\mathbf{v})\right)$. \\
\indent We fix $q > 3/2$ and we consider the following hypotheses: 
\begin{hypothesis}\label{hyp_beta_GC}
We assume that the function $\mathcal{R}$ satisfies 
\begin{itemize}
\item $\mathcal{R}$ is $\theta$-periodic in $\tau$ direction,
\item $\mathcal{R} \in \left(L^{\infty}\left(0,T;L_{\#}^{\infty}\left(0,\theta;W^{1,\infty}(K)\right)\right)\right)^{3\times 3}$ for any compact subset $K \subset \R^{3}$,
\item $\D_{t}\mathcal{R} \in \left(L^{\infty}\left(0,T;L_{\#}^{\infty}\left(0,\theta;W^{1,q}(K)\right)\right)\right)^{3\times 3}$ for any compact subset $K \subset \R^{3}$.
\end{itemize}
\end{hypothesis}

Consequently, adding sufficient hypotheses on $f^{0}$, $(\mathbf{E}_{\epsilon})_{\epsilon\,>\,0}$ and $(\mathbf{B}_{\epsilon})_{\epsilon\,>\,0}$ allows to establish a 0-th order two-scale convergence result: 
\begin{theorem} \label{TSCV_F0_GC}
We assume that Hypotheses \ref{hyp_beta_GC} are satisfied and that $f^{0} \in L^{2}(\R^{6})$ and both sequences $(\mathbf{E}_{\epsilon})_{\epsilon\,>\,0}$ and $(\mathbf{B}_{\epsilon})_{\epsilon\,>\,0}$ are bounded in $\left(L^{\infty}\left(0,T;W^{1,q}(K)\right)\right)^{3}$ independently of $\epsilon$ and for any $K \subset \R^{3}$ compact. We denote $\bm{\mathcal{E}}_{0} = \bm{\mathcal{E}}_{0}(t,\tau,\mathbf{x})$ and $\bm{\mathcal{B}}_{0} = \bm{\mathcal{B}}_{0}(t,\tau,\mathbf{x})$ as the respective two-scale limit of $(\mathbf{E}_{\epsilon})_{\epsilon\,>\,0}$ and $(\mathbf{B}_{\epsilon})_{\epsilon\,>\,0}$ in the space  $\left(L^{\infty}\left(0,T;L_{\#}^{\infty}\left(0,\theta;W^{1,q}(K)\right)\right)\right)^{3}$ and we define $\bm{\mathcal{L}}_{0}$ as
\begin{equation*}
\bm{\mathcal{L}}_{0}(t,\tau,\mathbf{x},\mathbf{v}) = \bm{\mathcal{E}}_{0}(t,\tau,\mathbf{x}) + \mathbf{v} \times \bm{\mathcal{B}}_{0}(t,\tau,\mathbf{x}) \, .
\end{equation*}
Then, $(f_{\epsilon})_{\epsilon\,>\,0}$ is bounded in $L^{\infty}\left(0,T;L^{2}(\R^{6})\right)$ independently of $\epsilon$ and, up to the extraction of a subsequence, two-scale converges to the profile $F_{0} = F_{0}(t,\tau,\mathbf{x},\mathbf{v})$ in $L^{\infty}\left(0,T;L_{\#}^{\infty}\left(0,\theta;L^{2}(\R^{6})\right)\right)$. Furthermore, $F_{0}$ is characterized by
\begin{equation}
F_{0}(t,\tau,\mathbf{x},\mathbf{v}) = G_{0}\left(t,\mathbf{x},\mathcal{R}(t,-\tau,\mathbf{x})\,\mathbf{v}\right) \, ,
\end{equation}
with $G_{0} = G_{0}(t,\mathbf{x},\mathbf{v}) \in L^{\infty}\left(0,T;L_{loc}^{2}(\R^{6})\right)$ solution of
\begin{equation}\label{def_F0_GC}
\left\{
\begin{array}{l}
\begin{split}
\D_{t}G_{0}(t,\mathbf{x},\mathbf{v}) &+ \left(\mathcal{J}_{1}(t,\mathbf{x})\,\mathbf{v}\right)\cdot\nabla_{\mathbf{x}}G_{0}(t,\mathbf{x},\mathbf{v}) + \mathcal{J}_{2}(\bm{\mathcal{L}}_{0})(t,\mathbf{x},\mathbf{v}) \cdot \nabla_{\mathbf{v}}G_{0}(t,\mathbf{x},\mathbf{v}) = 0 \, , 
\end{split}
\\
G_{0}(t=0,\mathbf{x},\mathbf{v}) = f^{0}(\mathbf{x},\mathbf{v}) \, ,
\end{array}
\right.
\end{equation}
where $\mathcal{J}_{1}(t,\mathbf{x})$ and $\mathcal{J}_{2}(\bm{\mathcal{L}}_{0})(t,\mathbf{x},\mathbf{v})$ are defined by
\begin{align*}
\mathcal{J}_{1}(t,\mathbf{x}) &= \cfrac{1}{\theta}\,\int_{0}^{\theta} \mathcal{R}(t,\tau,\mathbf{x}) \, d\tau \, , \\
\mathcal{J}_{2}(\bm{\mathcal{L}}_{0})(t,\mathbf{x},\mathbf{v}) &= \cfrac{1}{\theta}\int_{0}^{\theta} J_{2}(\bm{\mathcal{L}}_{0})(t,\tau,\mathbf{x},\mathbf{v}) \, d\tau \, .
\end{align*}
with $J_{2}$ defined as
\begin{equation*}
\begin{split}
\hspace{-0.5em}J_{2}(\bm{\mathcal{L}}_{0})(t,\tau,\mathbf{x},\mathbf{v}) = \mathcal{R}(t,\tau,\mathbf{x})^{-1} \Big[ -\D_{t}\mathcal{R}(t,\tau,\mathbf{x}) \mathbf{v} &- \big(\nabla_{\mathbf{x}}\mathcal{R}(t,\tau,\mathbf{x})\,\mathbf{v}\big) \mathcal{R}(t,\tau,\mathbf{x})\,\mathbf{v} \\
&+ \bm{\mathcal{L}}_{0}\left(t,\tau,\mathbf{x},\mathcal{R}(t,\tau,\mathbf{x})\mathbf{v}\right) \Big] \, .
\end{split}
\end{equation*}
\end{theorem}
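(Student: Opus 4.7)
The plan is to recast the Vlasov equation \eqref{Vlasov_GCeps_intro} as a special case of the generic convection equation \eqref{convection} in phase space, and then invoke Theorem \ref{def_U0} directly. Setting $n=6$ and treating $(\mathbf{x},\mathbf{v})\in\R^{6}$ as the spatial variable, the natural identifications are
\begin{equation*}
\mathbf{A}_{\epsilon}(t,\mathbf{x},\mathbf{v}) = \bigl(\mathbf{v},\, \mathbf{E}_{\epsilon}(t,\mathbf{x}) + \mathbf{v}\times\mathbf{B}_{\epsilon}(t,\mathbf{x})\bigr),\qquad
\mathbf{L}(t,\tau,\mathbf{x},\mathbf{v}) = \bigl(\mathbf{0},\, \mathbf{v}\times\bm{\beta}(t,\tau,\mathbf{x})\bigr).
\end{equation*}
Both fields are divergence-free in $(\mathbf{x},\mathbf{v})$ (the first because $\mathbf{E}_{\epsilon}$ is $\mathbf{v}$-independent and $\mathbf{v}\times\mathbf{B}_{\epsilon}\perp\mathbf{v}$, the second for the same reason), and $\mathbf{L}$ is $\theta$-periodic in $\tau$ by assumption. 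Conservation of the $L^{2}(\R^{6})$ norm along the characteristics of the equation then yields the $\epsilon$-uniform bound of $(f_{\epsilon})$ in $L^{\infty}(0,T;L^{2}(\R^{6}))$ needed to apply Theorem \ref{def_U0}.

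Next I would solve the characteristic system associated with $\mathbf{L}$ for $\sigma=0$. Since the first three components of $\mathbf{L}$ vanish, the position part is constant, $\mathbf{X}_{\mathbf{x}}(\tau;\mathbf{x},\mathbf{v},t;0)=\mathbf{x}$, and the velocity part satisfies the linear ODE $\partial_{\tau}\mathbf{V} = \mathbf{V}\times\bm{\beta}(t,\tau,\mathbf{x}) = \tilde{\mathfrak{B}}'(t,\tau,\mathbf{x})\mathbf{V}$. Under Hypothesis \ref{hyp_beta_GC}, the resolvent of this linear system is precisely the matrix $\mathcal{R}(t,\tau,\mathbf{x}) = \exp(\tilde{\mathfrak{B}}(t,\tau,\mathbf{x}))$, which is $\theta$-periodic in $\tau$ and satisfies $\mathcal{R}(t,0,\mathbf{x})=I$. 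Hence
\begin{equation*}
\mathbf{X}(\tau;\mathbf{x},\mathbf{v},t;0) = \bigl(\mathbf{x},\, \mathcal{R}(t,\tau,\mathbf{x})\,\mathbf{v}\bigr),
\end{equation*}
so in particular $\mathbf{X}(-\tau;\mathbf{x},\mathbf{v},t;0) = (\mathbf{x},\mathcal{R}(t,-\tau,\mathbf{x})\mathbf{v})$, which will produce the announced formula $F_{0}(t,\tau,\mathbf{x},\mathbf{v}) = G_{0}(t,\mathbf{x},\mathcal{R}(t,-\tau,\mathbf{x})\mathbf{v})$. I would then verify the regularity statements of Hypothesis \ref{hyp_U0} with $p=2$, $n=6$, and the given $q>3/2$ (so that $1/p+1/q'<1$): the $W^{1,q}$-bound on $\mathbf{A}_{\epsilon}$ localizes trivially in $\mathbf{v}$ from the hypotheses on $\mathbf{E}_{\epsilon}$, $\mathbf{B}_{\epsilon}$, and the required Sobolev regularity of $\mathbf{X}$, $\nabla_{(\mathbf{x},\mathbf{v})}\mathbf{X}$, $\partial_{t}\mathbf{X}$ is transferred from the corresponding bounds on $\mathcal{R}$, $\nabla_{\mathbf{x}}\mathcal{R}$, $\partial_{t}\mathcal{R}$ imposed in Hypothesis \ref{hyp_beta_GC}.

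Once Theorem \ref{def_U0} is applied, it remains to identify $\tilde{\mathbf{a}}_{0}$ explicitly so as to recognize equation \eqref{eq_V0} as equation \eqref{def_F0_GC}. I would compute the block decomposition
\begin{equation*}
\nabla_{(\mathbf{x},\mathbf{v})}\mathbf{X}(\tau;\mathbf{x},\mathbf{v},t;0) = \begin{pmatrix} I_{3} & 0 \\ (\nabla_{\mathbf{x}}\mathcal{R})\,\mathbf{v} & \mathcal{R} \end{pmatrix},\qquad
\partial_{t}\mathbf{X} = \bigl(\mathbf{0},\, (\partial_{t}\mathcal{R})\,\mathbf{v}\bigr),
\end{equation*}
invert the Jacobian explicitly, and evaluate $\bm{\mathcal{A}}_{0}$ at $(\mathbf{x},\mathcal{R}\mathbf{v})$, where by the linearity of the two-scale convergence the profile $\bm{\mathcal{A}}_{0}$ equals $(\mathcal{R}\mathbf{v},\,\bm{\mathcal{L}}_{0}(t,\tau,\mathbf{x},\mathcal{R}\mathbf{v}))$. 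The top three rows of $\bm{\alpha}_{0}$ immediately give $\mathcal{R}(t,\tau,\mathbf{x})\,\mathbf{v}$, whose $\tau$-average is $\mathcal{J}_{1}(t,\mathbf{x})\mathbf{v}$, while the bottom three rows reproduce the expression $J_{2}(\bm{\mathcal{L}}_{0})$ after averaging. Substituting in \eqref{eq_V0} yields \eqref{def_F0_GC}.

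The main obstacle I anticipate is the careful handling of the Jacobian and its inverse on the phase space: the non-trivial off-diagonal block $(\nabla_{\mathbf{x}}\mathcal{R})\mathbf{v}$ couples the spatial and velocity gradients in $\bm{\alpha}_{0}$, and one must keep track of the correct placement of $\mathcal{R}$, $\mathcal{R}^{-1}$, $\partial_{t}\mathcal{R}$, and $\nabla_{\mathbf{x}}\mathcal{R}$ to recover $J_{2}$ in the exact form stated. The rest of the argument is essentially mechanical given Theorem \ref{def_U0}, since all the functional-analytic content (the two-scale compactness and the characterization of the limit) is already packaged into the generic result.
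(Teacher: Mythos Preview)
Your proposal is correct and follows exactly the approach the paper intends: Theorem \ref{TSCV_F0_GC} is stated as a direct application of the generic Theorem \ref{def_U0}, and the paper offers no separate proof beyond the identifications you spell out. Your computation of the characteristic flow $\mathbf{X}(\tau;\mathbf{x},\mathbf{v},t;0)=(\mathbf{x},\mathcal{R}(t,\tau,\mathbf{x})\mathbf{v})$, the block Jacobian, and the resulting expressions for $\bm{\alpha}_{0}$ and $\tilde{\mathbf{a}}_{0}$ match what is needed to recover \eqref{def_F0_GC} from \eqref{eq_V0}.
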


We can remark here that the results of Theorem \ref{TSCV_F0_GC} are coherent with the Guiding-Center model presented in \cite{Homogenization_VP}. Indeed, taking $\mathbf{B}_{\epsilon} = 0$ and $\bm{\beta} = \mathbf{e}_{z}$ leads to the matrix
\begin{equation*}
\mathcal{R}(t,\tau,\mathbf{x}) = \left(
\begin{array}{ccc}
\cos\tau & \sin\tau & 0 \\
-\sin\tau & \cos\tau & 0 \\
0 & 0 & 1
\end{array}
\right) \, ,
\end{equation*}
which is $2\pi$-periodic in $\tau$. Consequently, assuming that $\mathbf{E}_{\epsilon}$ and $\mathbf{B}_{\epsilon}$ converge strongly in $\left(L^{\infty}\left(0,T;L_{loc}^{2}(\R^{3})\right)\right)^{3}$ to $\mathbf{E}$ and $\mathbf{B}$ respectively, we have
\begin{equation*}
\mathcal{J}_{1}(t,\mathbf{x})\mathbf{v} = v_{z}\,\mathbf{e}_{z} \, ,
\end{equation*}
and
\begin{equation*}
\mathcal{J}_{2}(\bm{\mathcal{L}}_{0})(t,\mathbf{x},\mathbf{v}) = E_{z}(t,\mathbf{x})\,\mathbf{e}_{z} + \mathbf{v} \times \left(B_{z}(t,\mathbf{x})\,\mathbf{e}_{z}\right) \, .
\end{equation*}

\indent In order to characterize the higher order terms, it is necessary to add several assumptions. We fix an integer $k > 0$ and we consider the following hypotheses for $(f_{\epsilon})_{\epsilon\,>\,0}$, $(\mathbf{E}_{\epsilon})_{\epsilon\,>\,0}$ and $(\mathbf{B}_{\epsilon})_{\epsilon\,>\,0}$:
\begin{hypothesis} \label{hyp_EiBi_GC}
Defining recursively the sequences $(\mathbf{E}_{\epsilon,i})_{\epsilon\,>\,0}$ and $(\mathbf{B}_{\epsilon,i})_{\epsilon\,>\,0}$ as
\begin{equation*}
\begin{split}
&\left\{
\begin{array}{ll}
\mathbf{E}_{\epsilon,i}(t,\mathbf{x}) = \cfrac{1}{\epsilon}\,\left(\mathbf{E}_{\epsilon,i-1}(t,\mathbf{x}) - \bm{\mathcal{E}}_{i-1}\left(t,\cfrac{t}{\epsilon},\mathbf{x}\right) \right) \, , & \forall\,i=1,\dots,k \, , \\
\mathbf{E}_{\epsilon,0}(t,\mathbf{x}) = \mathbf{E}_{\epsilon}(t,\mathbf{x}) \, ,
\end{array}
\right. \\
&\left\{
\begin{array}{ll}
\mathbf{B}_{\epsilon,i}(t,\mathbf{x}) = \cfrac{1}{\epsilon}\,\left(\mathbf{B}_{\epsilon,i-1}(t,\mathbf{x}) - \bm{\mathcal{B}}_{i-1}\left(t,\cfrac{t}{\epsilon},\mathbf{x}\right) \right) \, , & \forall\,i=1,\dots,k \, , \\
\mathbf{B}_{\epsilon,0}(t,\mathbf{x}) = \mathbf{B}_{\epsilon}(t,\mathbf{x}) \, ,
\end{array}
\right.
\end{split}
\end{equation*}
we assume that, for all $i=0,\dots,k$, $(\mathbf{E}_{\epsilon,i})_{\epsilon\,>\,0}$ and $(\mathbf{B}_{\epsilon,i})_{\epsilon\,>\,0}$ two-scale converge to $\bm{\mathcal{E}}_{i} = \bm{\mathcal{E}}_{i}(t,\tau,\mathbf{x})$ and $\bm{\mathcal{B}}_{i} = \bm{\mathcal{B}}_{i}(t,\tau,\mathbf{x})$ respectively in $\left(L^{\infty}\left(0,T;L_{\#}^{\infty}\left(0,\theta;W^{1,q}(K)\right)\right)\right)^{3}$ for any compact subset $K \subset \R^{3}$.
\end{hypothesis}

\begin{hypothesis} \label{hyp_fi_GC}
Defining recursively the sequence $(f_{\epsilon,i})_{\epsilon\,>\,0}$ as
\begin{equation*}
\left\{
\begin{array}{ll}
f_{\epsilon,i}(t,\mathbf{x},\mathbf{v}) = \cfrac{1}{\epsilon}\,\left(f_{\epsilon,i-1}(t,\mathbf{x},\mathbf{v}) - F_{i-1}\left(t,\cfrac{t}{\epsilon},\mathbf{x},\mathbf{v}\right)\right) \, , & \forall\,i=1,\dots,k-1\, , \\
f_{\epsilon,0}(t,\mathbf{x},\mathbf{v}) = f_{\epsilon}(t,\mathbf{x},\mathbf{v}) \, ,
\end{array}
\right.
\end{equation*}
we assume that, up to a subsequence, the sequence $(f_{\epsilon,i})_{\epsilon\,>\,0}$ two-scale converges to the profile $F_{i} = F_{i}(t,\tau,\mathbf{x},\mathbf{v}) \in L^{\infty}\left(0,T;L_{\#}^{\infty}\left(0,\theta;L^{2}(\R^{6})\right)\right)$ for all $i=0,\dots,k-1$.
\end{hypothesis}

We now define $\bm{\mathcal{L}}_{i}$ for $i=0,\dots,k$ such that
\begin{equation*}
\bm{\mathcal{L}}_{i}(t,\tau,\mathbf{x},\mathbf{v}) = \bm{\mathcal{E}}_{i}(t,\tau,\mathbf{x}) + \mathbf{v} \times \bm{\mathcal{B}}_{i}(t,\tau,\mathbf{x}) \, .
\end{equation*}
We introduce $W_{0},\dots,W_{k}$ such that $W_{0} = 0$ and, for any $i=1,\dots,k$,
\begin{equation} \label{def_Wk_GC}
\begin{split}
W_{i}(t,\tau,\mathbf{x},\mathbf{v}) &= \int_{0}^{\tau} \left( 
\begin{array}{c}
\left(\mathcal{J}_{1}(t,\mathbf{x})-\mathcal{R}(t,\sigma,\mathbf{x})\right)\mathbf{v} \\
\mathcal{J}_{2}(\bm{\mathcal{L}}_{0})(t,\mathbf{x},\mathbf{v}) - J_{2}(\bm{\mathcal{L}}_{0})(t,\sigma,\mathbf{x},\mathbf{v})
\end{array}
\right) \\
&\qquad \qquad \qquad \cdot \left(
\begin{array}{c}
\nabla_{\mathbf{x}}G_{i-1}(t,\mathbf{x},\mathbf{v}) + \nabla_{\mathbf{x}}W_{i-1}(t,\sigma,\mathbf{x},\mathbf{v}) \\
\nabla_{\mathbf{v}}G_{i-1}(t,\mathbf{x},\mathbf{v}) + \nabla_{\mathbf{v}}W_{i-1}(t,\sigma,\mathbf{x},\mathbf{v})
\end{array}
\right) \, d\sigma \\
&\quad + \sum_{j\,=\,1}^{i-1} \int_{0}^{\tau} \Big[ \left[ \mathcal{J}_{3}(\bm{\mathcal{L}}_{j})(t,\mathbf{x},\mathbf{v}) - \mathcal{R}(t,\sigma,\mathbf{x})^{-1}  \bm{\mathcal{L}}_{j}\left(t,\sigma,\mathbf{x},\mathcal{R}(t,\sigma,\mathbf{x})\mathbf{v}\right) \right] \\
&\qquad \qquad \qquad \cdot \left[ \nabla_{\mathbf{v}}G_{i-j-1}(t,\mathbf{x},\mathbf{v}) + \nabla_{\mathbf{v}}W_{i-j-1}(t,\sigma,\mathbf{x},\mathbf{v})\right] \Big] \, d\sigma \\
&\quad - \int_{0}^{\tau} \Bigg[ \D_{t}W_{i-1}(t,\sigma,\mathbf{x},\mathbf{v}) - \cfrac{1}{\theta}\int_{0}^{\theta} \D_{t}W_{i-1}(t,\zeta,\mathbf{x},\mathbf{v})\, d\zeta \Bigg]\, d\sigma \, ,
\end{split}
\end{equation}
with $\mathcal{J}_{3}(\bm{\mathcal{L}}_{j})$ defined by
\begin{equation*}
\mathcal{J}_{3}(\bm{\mathcal{L}}_{j})(t,\mathbf{x},\mathbf{v}) = \cfrac{1}{\theta}\,\int_{0}^{\theta} \mathcal{R}(t,\tau,\mathbf{x})^{-1} \, \bm{\mathcal{L}}_{j}(t,\tau,\mathcal{R}(t,\tau,\mathbf{x})\,\mathbf{v}) \, d\tau \, ,
\end{equation*}
and where $G_{0},\dots,G_{k-1}$ are linked with $F_{0},\dots,F_{k-1}$ and $W_{0},\dots,W_{k-1}$ thanks to the relation
\begin{equation*}
F_{i}(t,\tau,\mathbf{x},\mathbf{v}) = G_{i}\left(t,\mathbf{x},\mathcal{R}(t,-\tau,\mathbf{x})\,\mathbf{v}\right) + W_{i}\left(t,\tau,\mathbf{x},\mathcal{R}(t,-\tau,\mathbf{x})\,\mathbf{v}\right) \, .
\end{equation*}

With these notations, we can establish a two-scale convergence result at the $k$-th order:
\begin{theorem}
We assume that the hypotheses of Theorem \ref{TSCV_F0_GC} and that Hypotheses \ref{hyp_EiBi_GC} and \ref{hyp_fi_GC} are satisfied. We introduce $R_{k-1}$ as follows
\begin{equation}
\begin{split}
R_{k-1}(t,\tau,\mathbf{x},\mathbf{v}) &= \D_{t}F_{k-1}(t,\tau,\mathbf{x},\mathbf{v}) 
+ \left(\mathcal{J}_{1}(t,\mathbf{x})\,\mathbf{v}\right) \cdot \nabla_{\mathbf{x}}F_{k-1}(t,\tau,\mathbf{x},\mathbf{v}) \\
&\quad + \left[\cfrac{1}{\theta}\int_{0}^{\theta}
\mathcal{R}(t,\sigma,\mathbf{x})^{-1}\big[ -\D_{t}\mathcal{R}(t,\sigma,\mathbf{x})\mathbf{v} - (\nabla_{\mathbf{x}}\mathcal{R}(t,\sigma,\mathbf{x})\mathbf{v})\mathcal{R}(t,\sigma,\mathbf{x})\,\mathbf{v} \big] \, d\sigma\right] \\
&\qquad \qquad \qquad \qquad \qquad \qquad \qquad  \cdot \nabla_{\mathbf{v}}F_{k-1}(t,\tau,\mathbf{x},\mathbf{v}) \\
&\quad+ \sum_{i=0}^{k-1} \left[\cfrac{1}{\theta} \int_{0}^{\theta}\mathcal{R}(t,\sigma,\mathbf{x})^{-1}\bm{\mathcal{L}}_{i}\left(t,\sigma+\tau,\mathbf{x},\mathcal{R}(t,\sigma,\mathbf{x})\mathbf{v}\right) \, d\sigma \right] \cdot \nabla_{\mathbf{v}}F_{k-1-i}(t,\tau,\mathbf{x},\mathbf{v}) \, .
\end{split}
\end{equation}
In addition, taking $s'$ such that $\frac{1}{s'} = 1 - \frac{1}{q} - \frac{1}{r}$ with $r \in [1,\frac{6q}{6-q}[$ and defining $X^{s'}(K) = \left(W^{1,q}(K)\right)' \cup \left(W^{1,s'}(K)\right)$, we assume that, for any compact subset $K \subset \R^{6}$,
\begin{itemize}
\item $W_{k} \in L^{\infty}\left(0,T;L_{\#}^{\infty}\left(0,\theta;L^{2}(K)\right)\right)$, 
\item $\D_{t}W_{k},R_{k-1} \in L^{\infty}\left(0,T;L_{\#}^{\infty}\left(0,\theta;X^{s'}(K)\right)\right)$.
\end{itemize}
Then, if the sequence $(f_{\epsilon,k})_{\epsilon\,>\,0}$ defined by
\begin{equation*}
f_{\epsilon,k}(t,\mathbf{x},\mathbf{v}) = \cfrac{1}{\epsilon}\,\left(f_{\epsilon,k-1}(t,\mathbf{x},\mathbf{v}) - F_{k-1}\left(t,\cfrac{t}{\epsilon},\mathbf{x},\mathbf{v}\right)\right)\, ,
\end{equation*}
is bounded independently of $\epsilon$ in $L^{\infty}\left(0,T;L_{loc}^{2}(\R^{6})\right)$, it two-scale converges to the profile $F_{k} = F_{k}(t,\tau,\mathbf{x},\mathbf{v}) \in L^{\infty}\left(0,T;L_{\#}^{\infty}\left(0,\theta;L^{2}(\R^{6})\right)\right)$ up to the extraction of a subsequence. Furthermore, $F_{k}$ is fully characterized by
\begin{equation}
F_{k}(t,\tau,\mathbf{x},\mathbf{v}) = G_{k}\left(t,\mathbf{x},\mathcal{R}(t,-\tau,\mathbf{x})\,\mathbf{v}\right) + W_{k}\left(t,\tau,\mathbf{x},\mathcal{R}(t,-\tau,\mathbf{x})\,\mathbf{v}\right) \, ,
\end{equation}
where $W_{k}$ is defined in \eqref{def_Wk_GC} and where $G_{k} = G_{k}(t,\mathbf{x},\mathbf{v}) \in L^{\infty}\left(0,T;L_{loc}^{2}(\R^{6})\right)$ is the solution of
\begin{equation}
\left\{
\begin{array}{l}
\begin{split}
\D_{t}G_{k}&(t,\mathbf{x},\mathbf{v}) + \left(\mathcal{J}_{1}(t,\mathbf{x})\mathbf{v}\right) \cdot \nabla_{\mathbf{x}}G_{k}(t,\mathbf{x},\mathbf{v}) + \mathcal{J}_{2}(\bm{\mathcal{L}}_{0})(t,\mathbf{x},\mathbf{v}) \cdot \nabla_{\mathbf{v}} G_{k}(t,\mathbf{x},\mathbf{v}) \\
&= -\cfrac{1}{\theta}\int_{0}^{\theta} \left[\D_{t}W_{k}(t,\tau,\mathbf{x},\mathbf{v}) + \left(\mathcal{R}(t,\tau,\mathbf{x})\mathbf{v}\right)\cdot \nabla_{\mathbf{x}}W_{k}(t,\tau,\mathbf{x},\mathbf{v}) \right] \, d\tau \\
&\quad -\cfrac{1}{\theta}\int_{0}^{\theta} J_{2}(\bm{\mathcal{L}}_{0})(t,\tau,\mathbf{x},\mathbf{v}) \cdot \nabla_{\mathbf{v}}W_{k}(t,\tau,\mathbf{x},\mathbf{v})\, d\tau \\
&\quad - \cfrac{1}{\theta}\sum_{i=0}^{k}\int_{0}^{\theta} \left[ \mathcal{R}(t,\tau,\mathbf{x})^{-1}\bm{\mathcal{L}}_{i}\left(t,\tau,\mathbf{x},\mathcal{R}(t,\tau,\mathbf{x})\mathbf{v}\right) \right] \cdot \nabla_{\mathbf{v}}W_{k-i}(t,\tau,\mathbf{x},\mathbf{v}) \, d\tau \\
&\quad - \sum_{i=1}^{k} \mathcal{J}_{3}(\bm{\mathcal{L}}_{i})(t,\mathbf{x},\mathbf{v}) \cdot \nabla_{\mathbf{v}}G_{k-i}(t,\mathbf{x},\mathbf{v}) \, ,
\end{split}
\\
G_{k}(t=0,\mathbf{x},\mathbf{v}) = 0 \, .
\end{array}
\right.
\end{equation}
\end{theorem}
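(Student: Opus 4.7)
The plan is to apply the generic $k$-th order two-scale theorem (Theorem \ref{CV_Uk}) directly to the Vlasov problem, viewed as an instance of \eqref{convection} with phase-space variable $\mathbf{y}=(\mathbf{x},\mathbf{v})\in\R^{6}$, regular convection term $\mathbf{A}_{\epsilon}(t,\mathbf{y}) = (\mathbf{v},\,\mathbf{E}_{\epsilon}+\mathbf{v}\times\mathbf{B}_{\epsilon})$, and singular convection $\mathbf{L}(t,\tau,\mathbf{y}) = (0,\,\mathbf{v}\times\bm{\beta}(t,\tau,\mathbf{x}))$. Under Hypothesis \ref{hyp_EiBi_GC}, the two-scale limits of the iterated sequences $\mathbf{A}_{\epsilon,i}$ are then $\bm{\mathcal{A}}_{i} = (\mathbf{v}\,\delta_{i0},\,\bm{\mathcal{L}}_{i})$, and Hypothesis \ref{hyp_fi_GC} provides the profiles $U_{i}=F_{i}$ required to set up the recursion.

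The first step is to integrate the characteristic system $\D_{\tau}\mathbf{X}=0$, $\D_{\tau}\mathbf{V}=\mathbf{V}\times\bm{\beta}(t,\tau,\mathbf{X})$. Since the position is frozen along the fast characteristic, the velocity equation reduces to a linear ODE with skew-symmetric coefficient matrix representing $\mathbf{v}\mapsto\mathbf{v}\times\bm{\beta}$, whose solution starting at $\sigma=0$ is $\mathbf{V}(\tau)=\mathcal{R}(t,\tau,\mathbf{x})\mathbf{v}$; the $\theta$-periodicity required in Theorem \ref{def_U0} is ensured by Hypotheses \ref{hyp_beta_GC}. This yields the block Jacobian and time derivative
\begin{equation*}
\nabla_{\mathbf{y}}(\mathbf{X},\mathbf{V})(\tau) = \begin{pmatrix}\mathbf{I} & 0\\ (\nabla_{\mathbf{x}}\mathcal{R})\,\mathbf{v} & \mathcal{R}\end{pmatrix}\,,\qquad \D_{t}(\mathbf{X},\mathbf{V})(\tau)=\bigl(0,\,(\D_{t}\mathcal{R})\,\mathbf{v}\bigr)\,,
\end{equation*}
from which the regularity hypotheses of Theorem \ref{CV_Uk} are verified using Hypotheses \ref{hyp_beta_GC}--\ref{hyp_EiBi_GC} and Sobolev embedding in $\R^{6}$.

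The second step is to substitute these characteristics into the generic quantities $\bm{\alpha}_{i},\tilde{\mathbf{a}}_{i},\mathbf{a}_{i}$. The block triangular form of the inverse Jacobian kills the $\mathbf{x}$-component of $\tilde{\mathbf{a}}_{i}$ for $i\geq 1$ and reduces $\tilde{\mathbf{a}}_{0}$ to $(\mathcal{J}_{1}(t,\mathbf{x})\mathbf{v},\,\mathcal{J}_{2}(\bm{\mathcal{L}}_{0})(t,\mathbf{x},\mathbf{v}))$; the crucial point is that the $-\D_{t}\mathbf{X}$ summand appearing only in $\bm{\alpha}_{0}$ produces precisely the terms $-\mathcal{R}^{-1}\D_{t}\mathcal{R}\,\mathbf{v}$ and $-\mathcal{R}^{-1}(\nabla_{\mathbf{x}}\mathcal{R}\,\mathbf{v})\mathcal{R}\,\mathbf{v}$ inside $J_{2}$. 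For $i\geq 1$, the change of variable $\sigma\mapsto\sigma-\tau$ combined with $\theta$-periodicity collapses $\mathbf{a}_{i}$ onto the second-component integrand that appears in the formula for $R_{k-1}$, where the argument $\sigma+\tau$ in $\bm{\mathcal{L}}_{i}$ is produced exactly by this shift; similarly $\tilde{\mathbf{a}}_{i}=(0,\,\mathcal{J}_{3}(\bm{\mathcal{L}}_{i}))$. Plugging these into the $k=0$ case reproduces \eqref{def_F0_GC} with $G_{0}=V_{0}$ and thus consistency with Theorem \ref{TSCV_F0_GC}.

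The third step is to verify that the recursive formulas \eqref{def_Wi} and \eqref{def_Ri}, specialised through the above substitutions and the change of unknown $(\mathbf{x},\mathbf{v})\mapsto(\mathbf{x},\mathcal{R}(t,-\tau,\mathbf{x})\mathbf{v})$ which links $F_{i}$ to $G_{i}$ and $W_{i}$, reduce to the expressions \eqref{def_Wk_GC} and to the stated formula for $R_{k-1}$, after which the transport equation \eqref{eq_Vk} translates term by term into the announced equation for $G_{k}$. The main obstacle will be bookkeeping: the composition $\mathcal{R}(t,\tau,\mathbf{x})\mathcal{R}(t,-\tau,\mathbf{x})=\mathbf{I}$, the extraction of $\nabla_{\mathbf{x}}(\mathcal{R}\mathbf{v})$ through the inverse block Jacobian, and the various $\theta$-periodic shifts generate a number of cancellations whose careful tracking is required in order to identify the quadratic $\nabla_{\mathbf{x}}\mathcal{R}$ terms inside $\mathcal{J}_{2}$ and to separate the $G_{k-i}$ and $W_{k-i}$ contributions. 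Once these cancellations are handled, the admissibility of $W_{k}$, $\D_{t}W_{k}$ and $R_{k-1}$ in the spaces listed in Theorem \ref{CV_Uk} follows directly from Hypotheses \ref{hyp_beta_GC}--\ref{hyp_fi_GC}, which concludes the application of Theorem \ref{CV_Uk} to the Guiding-Center Vlasov equation.
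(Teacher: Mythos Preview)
Your proposal is correct and follows essentially the same approach as the paper: the theorem is obtained as a direct specialisation of the generic Theorem \ref{CV_Uk} (together with Theorem \ref{cor_eq_Uk} for the form of $R_{k-1}$) to the phase-space convection equation, and the paper does not provide any separate argument beyond this identification. Your outline in fact supplies more detail than the paper does, correctly computing the fast characteristics $(\mathbf{X},\mathbf{V})(\tau)=(\mathbf{x},\mathcal{R}(t,\tau,\mathbf{x})\mathbf{v})$, the block-triangular Jacobian, and the translations $\tilde{\mathbf{a}}_{0}=(\mathcal{J}_{1}\mathbf{v},\mathcal{J}_{2}(\bm{\mathcal{L}}_{0}))$, $\tilde{\mathbf{a}}_{i}=(0,\mathcal{J}_{3}(\bm{\mathcal{L}}_{i}))$ for $i\geq1$; the only minor remark is that the regularity of $W_{k}$, $\D_{t}W_{k}$ and $R_{k-1}$ is an explicit \emph{assumption} of the theorem rather than a consequence of Hypotheses \ref{hyp_beta_GC}--\ref{hyp_fi_GC}.
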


\subsection{Finite Larmor Radius regime}

We focus now on the following linear equation:
\begin{equation*}
\left\{
\begin{array}{l}
\displaystyle \D_{t}f_{\epsilon} + \cfrac{\mathbf{v}_{\perp}}{\epsilon}\cdot\nabla_{\mathbf{x}_{\perp}}f_{\epsilon} + v_{||}\,\D_{x_{||}}f_{\epsilon} + \left(\mathbf{E}_{\epsilon} + \mathbf{v} \times \mathbf{B}_{\epsilon} + \cfrac{\mathbf{v} \times \bm{\mathcal{M}}}{\epsilon} \right) \cdot \nabla_{\mathbf{v}}f_{\epsilon} = 0 \, , \\
f_{\epsilon}(t=0,\mathbf{x},\mathbf{v}) = f^{0}(\mathbf{x},\mathbf{v}) \, ,
\end{array}
\right.
\end{equation*}
in which $(\mathbf{x},\mathbf{v}) \in \R^{3} \times \R^{3}$, $t \in [0,T]$, $f_{\epsilon} = f_{\epsilon}(t,\mathbf{x},\mathbf{v}) \in \R$ is the unknown distribution function, $\mathbf{E}_{\epsilon} = \mathbf{E}_{\epsilon}(t,\mathbf{x}) \in \R^{3}$ is the external electric field, $f^{0} = f^{0}(\mathbf{x},\mathbf{v})$ is the initial distribution function, $\bm{\mathcal{M}} = \mathbf{e}_{z} \in \R^{3}$ and $\mathbf{B}_{\epsilon}=\mathbf{B}_{\epsilon}(t,\mathbf{x}) \in \R^{3}$ constitute the external magnetic field. \\
\indent Thanks to well-chosen hypotheses for $f^{0}$ and the sequences $(\mathbf{E}_{\epsilon})_{\epsilon\,>\,0}$ and $(\mathbf{B}_{\epsilon})_{\epsilon\,>\,0}$, it is possible to establish a 0-th order two-scale convergence result:
\begin{theorem}\label{TSCV0_FLR}
We assume that $f^{0} \in L^{2}(\R^{6})$ and that $(\mathbf{E}_{\epsilon})_{\epsilon\,>\,0}$ and $(\mathbf{B}_{\epsilon})_{\epsilon\,>\,0}$ are bounded independently of $\epsilon$ in $\left(L^{\infty}\left(0,T;W^{1,q}(K)\right) \right)^{3}$ for $q > 3/2$ and for any compact subset $K \subset \R^{3}$. \\
We denote with $\bm{\mathcal{E}}_{0} = \bm{\mathcal{E}}_{0}(t,\tau,\mathbf{x})$ and $\bm{\mathcal{B}}_{0} = \bm{\mathcal{B}}_{0}(t,\tau,\mathbf{x})$ the respective two-scale limits of $(\mathbf{E}_{\epsilon})_{\epsilon\,>\,0}$ and $(\mathbf{B}_{\epsilon})_{\epsilon\,>\,0}$ in $\left(L^{\infty}\left(0,T;L_{\#}^{\infty}\left(0,2\pi;W^{1,q}(K)\right)\right)\right)^{3}$ and we introduce the vector function $\bm{\mathcal{L}}_{0}$ defined by
\begin{equation*}
\bm{\mathcal{L}}_{0}(t,\tau,\mathbf{x},\mathbf{v}) = \bm{\mathcal{E}}_{0}(t,\tau,\mathbf{x}) + \mathbf{v} \times \bm{\mathcal{B}}_{0}(t,\tau,\mathbf{x}) \, .
\end{equation*}
Up to a subsequence, $f_{\epsilon}$ two-scale converges to the profile $F_{0} = F_{0}(t,\tau,\mathbf{x},\mathbf{v})$ in $L^{\infty}\left(0,T;L_{\#}^{\infty}\left(0,2\pi;L^{2}(\R^{6})\right)\right)$ and $F_{0}$ is fully characterized by
\begin{equation}
F_{0}(t,\tau,\mathbf{x},\mathbf{v}) = G_{0}\left(t,\mathbf{x}+\mathcal{R}_{1}(-\tau)\,\mathbf{v}, \mathcal{R}_{2}(-\tau)\,\mathbf{v}\right) \, ,
\end{equation}
where $\mathcal{R}_{1}$, $\mathcal{R}_{2}$ and $G_{0} = G_{0}(t,\mathbf{x},\mathbf{v}) \in L^{\infty}\left(0,T;L_{loc}^{2}(\R^{6})\right)$ satisfy
\begin{equation*} \label{def_matR}
\mathcal{R}_{1}(\tau) = \left(
\begin{array}{ccc}
\sin\tau & 1-\cos\tau & 0 \\
\cos\tau-1 & \sin\tau & 0 \\
0 & 0 & 0
\end{array}
\right) \, ,  \quad \mathcal{R}_{2}(\tau) = \left(
\begin{array}{ccc}
\cos\tau & \sin\tau & 0 \\
-\sin\tau & \cos\tau & 0 \\
0 & 0 & 1
\end{array}
\right) \, ,
\end{equation*}
\begin{equation}\label{FLR_0th}
\left\{
\begin{array}{l}
\begin{split}
\D_{t}G_{0}(t,\mathbf{x},\mathbf{v}) + v_{||}\,\D_{x_{||}}G_{0}(t,\mathbf{x},\mathbf{v}) &+ \mathcal{J}_{1}(\bm{\mathcal{L}}_{0})(t,\mathbf{x},\mathbf{v}) \cdot \nabla_{\mathbf{x}}G_{0}(t,\mathbf{x},\mathbf{v}) \\
&+ \mathcal{J}_{2}(\bm{\mathcal{L}}_{0})(t,\mathbf{x},\mathbf{v}) \cdot \nabla_{\mathbf{v}}G_{0}(t,\mathbf{x},\mathbf{v}) = 0 \, ,
\end{split}
\\
G_{0}(t=0,\mathbf{x},\mathbf{v}) = f^{0}(\mathbf{x},\mathbf{v}) \, ,
\end{array}
\right.
\end{equation}
with $\mathcal{J}_{1}(\bm{\mathcal{L}}_{0})$ and $\mathcal{J}_{2}(\bm{\mathcal{L}}_{0})$ defined by
\begin{equation*}
\mathcal{J}_{i}(\bm{\mathcal{L}}_{0}) = \cfrac{1}{2\pi}\,\int_{0}^{2\pi}\mathcal{R}_{i}(-\tau) \,\bm{\mathcal{L}}_{0}\left(t,\tau,\mathbf{x}+\mathcal{R}_{1}(\tau)\,\mathbf{v}, \mathcal{R}_{2}(\tau)\,\mathbf{v}\right) d\tau \, .
\end{equation*}
\end{theorem}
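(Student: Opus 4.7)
The plan is to treat Theorem \ref{TSCV0_FLR} as a particular instance of the generic 0-th order statement Theorem \ref{def_U0}, applied on the phase space $\mathbf{z}=(\mathbf{x},\mathbf{v})\in\R^{6}$. With this identification, the slow convection field reads $\mathbf{A}_{\epsilon}(t,\mathbf{x},\mathbf{v})=\bigl(0,0,v_{||},\,\mathbf{E}_{\epsilon}(t,\mathbf{x})+\mathbf{v}\times\mathbf{B}_{\epsilon}(t,\mathbf{x})\bigr)$, while the fast field is the $t$-independent and $\tau$-independent vector $\mathbf{L}(\mathbf{v})=(\mathbf{v}_{\perp},\,\mathbf{v}\times\mathbf{e}_{z})$, which is trivially $2\pi$-periodic in $\tau$. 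One first checks that both fields are divergence-free in $\mathbf{z}$: the $\mathbf{x}$-block of $\mathbf{A}_{\epsilon}$ has zero $\mathbf{x}$-divergence since it only depends on $\mathbf{v}$, and the magnetic force is $\mathbf{v}$-divergence-free because $\mathbf{B}_{\epsilon}$ is $\mathbf{v}$-independent; the two blocks of $\mathbf{L}$ depend only on the conjugate variable.

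Next, I would integrate the characteristic ODE $\D_{\tau}\mathbf{X}=\mathbf{L}(\mathbf{X})$. Splitting $\mathbf{X}=(\mathbf{Y},\mathbf{W})$, the velocity equation $\D_{\tau}\mathbf{W}=\mathbf{W}\times\mathbf{e}_{z}$ produces a uniform rotation of $\mathbf{W}_{\perp}$ around $\mathbf{e}_{z}$, and integrating $\D_{\tau}\mathbf{Y}=\mathbf{W}_{\perp}$ gives explicitly $\mathbf{X}(\tau;\mathbf{x},\mathbf{v},t;0)=\bigl(\mathbf{x}+\mathcal{R}_{1}(\tau)\mathbf{v},\,\mathcal{R}_{2}(\tau)\mathbf{v}\bigr)$ with $\mathcal{R}_{1},\mathcal{R}_{2}$ as in the statement. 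Since $\mathbf{L}$ does not depend on $t$, $\D_{t}\mathbf{X}\equiv 0$, and the block-upper-triangular Jacobian $\nabla_{\mathbf{z}}\mathbf{X}(\tau)$ has the explicit inverse of the same shape with $\mathcal{R}_{i}(\tau)$ replaced by $\mathcal{R}_{i}(-\tau)$, thanks to the algebraic identity $\mathcal{R}_{1}(\tau)\mathcal{R}_{2}(-\tau)=-\mathcal{R}_{1}(-\tau)$. Combined with the assumed uniform $W_{loc}^{1,q}(\R^{3})$ bound on $(\mathbf{E}_{\epsilon},\mathbf{B}_{\epsilon})$, this produces the smoothness of $\mathbf{A}_{\epsilon},\mathbf{L},\D_{t}\mathbf{X},\nabla_{\mathbf{z}}\mathbf{X}$ required by Hypothesis \ref{hyp_U0} in dimension $n=6$ with $p=2$ (the condition $\frac{1}{p}+\max(\frac{1}{q}-\frac{1}{n},0)<1$ is exactly $q>3/2$). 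The $\epsilon$-uniform $L^{\infty}_{t}L^{2}_{\mathbf{x},\mathbf{v}}$ bound on $(f_{\epsilon})_{\epsilon}$ follows at once from the divergence-free character of the full convection field.

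Having set up the framework, Theorem \ref{def_U0} delivers the representation $F_{0}(t,\tau,\mathbf{x},\mathbf{v})=G_{0}\bigl(t,\mathbf{X}(-\tau;\mathbf{x},\mathbf{v},t;0)\bigr)=G_{0}\bigl(t,\mathbf{x}+\mathcal{R}_{1}(-\tau)\mathbf{v},\mathcal{R}_{2}(-\tau)\mathbf{v}\bigr)$, together with the transport equation $\D_{t}G_{0}+\tilde{\mathbf{a}}_{0}\cdot\nabla_{\mathbf{z}}G_{0}=0$. The final and most computational step, which I anticipate to be the main obstacle, is to identify $\tilde{\mathbf{a}}_{0}$ with the coefficients $(v_{||}\mathbf{e}_{z}+\mathcal{J}_{1}(\bm{\mathcal{L}}_{0}),\,\mathcal{J}_{2}(\bm{\mathcal{L}}_{0}))$ appearing in \eqref{FLR_0th}. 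Inserting $\bm{\mathcal{A}}_{0}\bigl(t,\tau,\mathbf{X}(\tau)\bigr)=\bigl((0,0,v_{||}),\,\bm{\mathcal{L}}_{0}(t,\tau,\mathbf{x}+\mathcal{R}_{1}(\tau)\mathbf{v},\mathcal{R}_{2}(\tau)\mathbf{v})\bigr)$ — using crucially that $\mathcal{R}_{2}$ preserves the $z$-component of the velocity — and applying the explicit inverse Jacobian derived above gives
\begin{equation*}
\bm{\alpha}_{0}(t,\tau,\mathbf{x},\mathbf{v})=\Bigl(v_{||}\mathbf{e}_{z}+\mathcal{R}_{1}(-\tau)\,\bm{\mathcal{L}}_{0}(t,\tau,\mathbf{x}+\mathcal{R}_{1}(\tau)\mathbf{v},\mathcal{R}_{2}(\tau)\mathbf{v}),\ \mathcal{R}_{2}(-\tau)\,\bm{\mathcal{L}}_{0}(t,\tau,\mathbf{x}+\mathcal{R}_{1}(\tau)\mathbf{v},\mathcal{R}_{2}(\tau)\mathbf{v})\Bigr).
\end{equation*}
Averaging in $\tau$ over $[0,2\pi]$ reproduces exactly $v_{||}\mathbf{e}_{z}+\mathcal{J}_{1}(\bm{\mathcal{L}}_{0})$ on the $\mathbf{x}$-block and $\mathcal{J}_{2}(\bm{\mathcal{L}}_{0})$ on the $\mathbf{v}$-block, so the transport equation for $G_{0}$ coincides with \eqref{FLR_0th} and the theorem follows.
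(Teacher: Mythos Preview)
Your proposal is correct and follows precisely the route the paper intends: Theorem \ref{TSCV0_FLR} is stated as a direct application of the abstract Theorem \ref{def_U0} to the phase-space variable $\mathbf{z}=(\mathbf{x},\mathbf{v})\in\R^{6}$, and the paper does not spell out the computation beyond that. Your explicit identification of $\mathbf{A}_{\epsilon}$, $\mathbf{L}$, the characteristic flow $\mathbf{X}(\tau)=(\mathbf{x}+\mathcal{R}_{1}(\tau)\mathbf{v},\mathcal{R}_{2}(\tau)\mathbf{v})$, the inverse Jacobian via $\mathcal{R}_{1}(\tau)\mathcal{R}_{2}(-\tau)=-\mathcal{R}_{1}(-\tau)$, and the resulting averaged field $\tilde{\mathbf{a}}_{0}=(v_{||}\mathbf{e}_{z}+\mathcal{J}_{1}(\bm{\mathcal{L}}_{0}),\mathcal{J}_{2}(\bm{\mathcal{L}}_{0}))$ fills in exactly the details the paper leaves implicit.
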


For obtaining higher order two-scale convergence terms, we first consider a fixed $k \in \N^{*}$ and we assume that the electric and magnetic fields satisfy the following hypotheses:
\begin{hypothesis} \label{hyp_EiBi_FLR}
Defining recursively the sequences $(\mathbf{E}_{\epsilon,i})_{\epsilon\,>\,0}$ and $(\mathbf{B}_{\epsilon,i})_{\epsilon\,>\,0}$ as
\begin{equation*}
\begin{split}
&\left\{
\begin{array}{ll}
\mathbf{E}_{\epsilon,i}(t,\mathbf{x}) = \cfrac{1}{\epsilon}\,\left(\mathbf{E}_{\epsilon,i-1}(t,\mathbf{x}) - \bm{\mathcal{E}}_{i-1}\left(t,\cfrac{t}{\epsilon},\mathbf{x}\right) \right) \, , & \forall\,i=1,\dots,k \, , \\
\mathbf{E}_{\epsilon,0}(t,\mathbf{x}) = \mathbf{E}_{\epsilon}(t,\mathbf{x}) \, ,
\end{array}
\right. \\
&\left\{
\begin{array}{ll}
\mathbf{B}_{\epsilon,i}(t,\mathbf{x}) = \cfrac{1}{\epsilon}\,\left(\mathbf{B}_{\epsilon,i-1}(t,\mathbf{x}) - \bm{\mathcal{B}}_{i-1}\left(t,\cfrac{t}{\epsilon},\mathbf{x}\right) \right) \, , & \forall\,i=1,\dots,k \, , \\
\mathbf{B}_{\epsilon,0}(t,\mathbf{x}) = \mathbf{B}_{\epsilon}(t,\mathbf{x}) \, ,
\end{array}
\right.
\end{split}
\end{equation*}
we assume that, for all $i=0,\dots,k$ and up to the extraction of a subsequence, $(\mathbf{E}_{\epsilon,i})_{\epsilon\,>\,0}$ and $(\mathbf{B}_{\epsilon,i})_{\epsilon\,>\,0}$ two-scale converge to the profiles $\bm{\mathcal{E}}_{i} = \bm{\mathcal{E}}_{i}(t,\tau,\mathbf{x})$ and $\bm{\mathcal{B}}_{i} = \bm{\mathcal{B}}_{i}(t,\tau,\mathbf{x})$ respectively in $\left(L^{\infty}\left(0,T;L_{\#}^{\infty}\left(0,2\pi;W^{1,q}(K)\right)\right)\right)^{3}$ for any compact subset $K \subset \R^{3}$.
\end{hypothesis}

\begin{hypothesis} \label{hyp_fi_FLR}
Defining recursively the sequence $(f_{\epsilon,i})_{\epsilon\,>\,0}$ as
\begin{equation*}
\left\{
\begin{array}{ll}
f_{\epsilon,i}(t,\mathbf{x},\mathbf{v}) = \cfrac{1}{\epsilon}\,\left(f_{\epsilon,i-1}(t,\mathbf{x},\mathbf{v}) - F_{i-1}\left(t,\cfrac{t}{\epsilon},\mathbf{x},\mathbf{v}\right)\right) \, , & \forall\,i=1,\dots,k-1\, , \\
f_{\epsilon,0}(t,\mathbf{x},\mathbf{v}) = f_{\epsilon}(t,\mathbf{x},\mathbf{v}) \, ,
\end{array}
\right.
\end{equation*}
we assume that, up to a subsequence, the sequence $(f_{\epsilon,i})_{\epsilon\,>\,0}$ two-scale converges to the profile $F_{i} = F_{i}(t,\tau,\mathbf{x},\mathbf{v}) \in L^{\infty}\left(0,T;L_{\#}^{\infty}\left(0,2\pi;L^{2}(\R^{6})\right)\right)$ for all $i=0,\dots,k-1$.
\end{hypothesis}

Hence, defining $\bm{\mathcal{L}}_{i}$ as 
\begin{equation*}
\bm{\mathcal{L}}_{i}(t,\tau,\mathbf{x},\mathbf{v}) = \bm{\mathcal{E}}_{i}(t,\tau, \mathbf{x}) + \mathbf{v} \times \bm{\mathcal{B}}_{i}(t,\tau,\mathbf{x}) \, ,
\end{equation*}
for all $i=0,\dots,k$, we define recursively the functions $W_{0},\dots,W_{k}$ such that $W_{0} = 0$ and, for any $i > 0$,
\begin{equation} \label{def_Wi_FLR}
\begin{split}
W_{i}(t,\tau,\mathbf{x},\mathbf{v}) &= \sum_{j\,=\,0}^{i-1}\int_{0}^{\tau} \left(
\begin{array}{c}
\mathcal{J}_{1}(\bm{\mathcal{L}}_{j})(t,\mathbf{x},\mathbf{v})-\mathcal{R}_{1}(-\sigma)\,\bm{\mathcal{L}}_{j}\left(t,\sigma,\mathbf{x}+\mathcal{R}_{1}(\sigma)\,\mathbf{v}, \mathcal{R}_{2}(\sigma)\,\mathbf{v} \right) \\
\mathcal{J}_{2}(\bm{\mathcal{L}}_{j})(t,\mathbf{x},\mathbf{v})-\mathcal{R}_{2}(-\sigma)\,\bm{\mathcal{L}}_{j}\left(t,\sigma,\mathbf{x}+\mathcal{R}_{1}(\sigma)\,\mathbf{v}, \mathcal{R}_{2}(\sigma)\,\mathbf{v} \right)
\end{array}
\right) \\
&\qquad \qquad \qquad \qquad \cdot \left(
\begin{array}{c}
\nabla_{\mathbf{x}}G_{i-1-j}(t,\mathbf{x},\mathbf{v}) + \nabla_{\mathbf{x}}W_{i-1-j}(t,\sigma,\mathbf{x},\mathbf{v}) \\
\nabla_{\mathbf{v}}G_{i-1-j}(t,\mathbf{x},\mathbf{v}) + \nabla_{\mathbf{v}}W_{i-1-j}(t,\sigma,\mathbf{x},\mathbf{v})
\end{array}
\right) \, d\sigma \\
&\quad - \int_{0}^{\tau} \left[ \D_{t}W_{i-1}(t,\sigma,\mathbf{x},\mathbf{v}) - \cfrac{1}{2\pi}\,\int_{0}^{2\pi}\D_{t}W_{i-1}(t,\zeta,\mathbf{x},\mathbf{v}) \, d\zeta \right] \, d\sigma
\end{split}
\end{equation}
where, for $i = 0,\dots,k-1$, $G_{i}$ is defined on $[0,T]\times\R^{6}$ thanks to the relation
\begin{equation*}
\begin{split}
F_{i}(t,\tau,\mathbf{x},\mathbf{v}) &= G_{i}\left(t,\mathbf{x}+\mathcal{R}_{1}(-\tau)\,\mathbf{v}, \mathcal{R}_{2}(-\tau)\,\mathbf{v} \right) + W_{i}\left(t,\tau,\mathbf{x}+\mathcal{R}_{1}(-\tau)\,\mathbf{v}, \mathcal{R}_{2}(-\tau)\,\mathbf{v} \right) \, .
\end{split}
\end{equation*}

Hence we have the following result for obtaining the $k$-th order term $F_{k}$:
\begin{theorem}\label{TSCVk_FLR}
We assume that the hypotheses of Theorem \ref{TSCV0_FLR} and Hypotheses \ref{hyp_EiBi_FLR} and \ref{hyp_fi_FLR} are satisfied for a fixed $k \in \N^{*}$, and we introduce the function $R_{k-1}$ defined by
\begin{equation}
\begin{split}
R_{k-1}(t,\tau,\mathbf{x},\mathbf{v}) &= \D_{t}F_{k-1}(t,\tau,\mathbf{x},\mathbf{v}) + v_{||}\,\D_{x_{||}}F_{k-1}(t,\tau,\mathbf{x},\mathbf{v}) \\
&\quad + \cfrac{1}{2\pi} \sum_{i=0}^{k-1} \Bigg[ \int_{0}^{2\pi} \left(
\begin{array}{c}
\mathcal{R}_{1}(-\sigma)\bm{\mathcal{L}}_{i}\left(t,\sigma+\tau,\mathbf{x}+\mathcal{R}_{1}(\sigma)\mathbf{v}, \mathcal{R}_{2}(\sigma)\mathbf{v}\right) \\
\mathcal{R}_{2}(-\sigma)\bm{\mathcal{L}}_{i}\left(t,\sigma+\tau,\mathbf{x}+\mathcal{R}_{1}(\sigma)\mathbf{v}, \mathcal{R}_{2}(\sigma)\mathbf{v}\right) 
\end{array}
\right) d\sigma \\
&\qquad \qquad \qquad \qquad \qquad \cdot \left(
\begin{array}{c}
\nabla_{\mathbf{x}}F_{k-1-i}(t,\tau,\mathbf{x},\mathbf{v}) \\
\nabla_{\mathbf{v}}F_{k-1-i}(t,\tau,\mathbf{x},\mathbf{v})
\end{array}
\right) \Bigg] \, .
\end{split}
\end{equation}
In addition, taking $s'$ such that $\frac{1}{s'} = 1 - \frac{1}{q} - \frac{1}{r}$ with $r \in [1,\frac{6q}{6-q}[$ and defining $X^{s'}(K) = \left(W^{1,q}(K)\right)' \cup \left(W^{1,s'}(K)\right)$, we assume that, for any compact subset $K \subset \R^{6}$,
\begin{itemize}
\item $W_{k} \in L^{\infty}\left(0,T;L_{\#}^{\infty}\left(0,2\pi;L^{2}(K)\right)\right)$, 
\item $\D_{t}W_{k},R_{k-1} \in L^{\infty}\left(0,T;L_{\#}^{\infty}\left(0,2\pi;X^{s'}(K)\right)\right)$.
\end{itemize}
Then, if the sequence $(f_{\epsilon,k})_{\epsilon\,>\,0}$ defined by
\begin{equation*}
f_{\epsilon,k}(t,\mathbf{x},\mathbf{v}) = \cfrac{1}{\epsilon}\,\left(f_{\epsilon,k-1}(t,\mathbf{x},\mathbf{v}) - F_{k-1}\left(t,\cfrac{t}{\epsilon},\mathbf{x},\mathbf{v}\right)\right)\, ,
\end{equation*}
is bounded independently of $\epsilon$ in $L^{\infty}\left(0,T;L_{loc}^{2}(\R^{6})\right)$, it two-scale converges to the profile $F_{k} = F_{k}(t,\tau,\mathbf{x},\mathbf{v}) \in L^{\infty}\left(0,T;L_{\#}^{\infty}\left(0,2\pi;L^{2}(\R^{6})\right)\right)$ up to the extraction of a subsequence. Furthermore, $F_{k}$ is fully characterized by
\begin{equation}
\begin{split}
F_{k}(t,\tau,\mathbf{x},\mathbf{v}) &= G_{k}\left(t,\mathbf{x}+\mathcal{R}_{1}(-\tau)\,\mathbf{v}, \mathcal{R}_{2}(-\tau)\,\mathbf{v} \right) + W_{k}\left(t,\tau,\mathbf{x}+\mathcal{R}_{1}(-\tau)\,\mathbf{v}, \mathcal{R}_{2}(-\tau)\,\mathbf{v} \right) \, ,
\end{split}
\end{equation}
where $W_{k}$ is defined in \eqref{def_Wi_FLR} and where $G_{k} = G_{k}(t,\mathbf{x},\mathbf{v}) \in L^{\infty}\left(0,T;L_{loc}^{2}(\R^{6})\right)$ is the solution of
\begin{equation}
\left\{
\begin{array}{l}
\begin{split}
\D_{t}&G_{k}(t,\mathbf{x},\mathbf{v}) + v_{||}\,\D_{x_{||}}G_{k}(t,\mathbf{x},\mathbf{v}) \\
&\qquad \qquad + \mathcal{J}_{1}(\bm{\mathcal{L}}_{0})(t,\mathbf{x},\mathbf{v}) \cdot \nabla_{\mathbf{x}}G_{k}(t,\mathbf{x},\mathbf{v}) + \mathcal{J}_{2}(\bm{\mathcal{L}}_{0})(t,\mathbf{x},\mathbf{v}) \cdot \nabla_{\mathbf{v}}G_{k}(t,\mathbf{x},\mathbf{v}) \\
&= -\cfrac{1}{2\pi}\,\int_{0}^{2\pi}\left[ \D_{t}W_{k}(t,\tau,\mathbf{x},\mathbf{v}) + v_{||}\,\D_{x_{||}}W_{k}(t,\tau,\mathbf{x},\mathbf{v}) \right] \, d\tau\\
&\quad - \cfrac{1}{2\pi}\sum_{i\,=\,0}^{k}\int_{0}^{2\pi} \Big[ \left(
\begin{array}{c}
\mathcal{R}_{1}(-\tau) \,\bm{\mathcal{L}}_{i}\left(t,\tau,\mathbf{x}+\mathcal{R}_{1}(\tau)\,\mathbf{v}, \mathcal{R}_{2}(\tau)\,\mathbf{v}\right) \\
\mathcal{R}_{2}(-\tau) \,\bm{\mathcal{L}}_{i}\left(t,\tau,\mathbf{x}+\mathcal{R}_{1}(\tau)\,\mathbf{v}, \mathcal{R}_{2}(\tau)\,\mathbf{v}\right)
\end{array}
\right) \\
&\qquad \qquad \qquad \qquad \cdot \left(
\begin{array}{c}
\nabla_{\mathbf{x}}W_{k-i}(t,\tau,\mathbf{x},\mathbf{v}) \\
\nabla_{\mathbf{v}}W_{k-i}(t,\tau,\mathbf{x},\mathbf{v}) 
\end{array}
\right) \Big] \, d\tau \\
&\quad - \sum_{i\,=\,1}^{k} \left(
\begin{array}{c}
\mathcal{J}_{1}(\bm{\mathcal{L}}_{i})(t,\mathbf{x},\mathbf{v}) \cdot \nabla_{\mathbf{x}}G_{k-i}(t,\mathbf{x},\mathbf{v}) \\
\mathcal{J}_{2}(\bm{\mathcal{L}}_{i})(t,\mathbf{x},\mathbf{v}) \cdot \nabla_{\mathbf{x}}G_{k-i}(t,\mathbf{x},\mathbf{v})
\end{array}
\right) \cdot \left(
\begin{array}{c}
\nabla_{\mathbf{x}}G_{k-i}(t,\mathbf{x},\mathbf{v}) \\
\nabla_{\mathbf{v}}G_{k-i}(t,\mathbf{x},\mathbf{v}) 
\end{array}
\right) \, ,
\end{split}
\\
G_{k}(t=0,\mathbf{x},\mathbf{v}) = 0 \, .
\end{array}
\right.
\end{equation}
\end{theorem}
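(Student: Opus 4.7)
The plan is to apply Theorem~\ref{CV_Uk} to the Finite Larmor Radius equation by viewing it as an instance of the generic convection equation \eqref{convection} on the phase space $\mathbf{y} = (\mathbf{x}, \mathbf{v}) \in \R^{6}$. First I would recast the equation by identifying the slow convection field and the fast one as
\begin{equation*}
\mathbf{A}_{\epsilon}(t, \mathbf{y}) = \begin{pmatrix} v_{||}\,\bm{\mathcal{M}} \\ \mathbf{E}_{\epsilon}(t,\mathbf{x}) + \mathbf{v} \times \mathbf{B}_{\epsilon}(t,\mathbf{x}) \end{pmatrix} \, , \qquad \mathbf{L}(\mathbf{y}) = \begin{pmatrix} \mathbf{v}_{\perp} \\ \mathbf{v} \times \bm{\mathcal{M}} \end{pmatrix} \, ,
\end{equation*}
noting that $\mathbf{L}$ is autonomous, divergence-free in $\mathbf{y}$, and generates a $2\pi$-periodic flow. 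The velocity subsystem $\D_{\tau}\mathbf{V} = \mathbf{V}\times\bm{\mathcal{M}}$ gives $\mathbf{V}(\tau) = \mathcal{R}_{2}(\tau)\,\mathbf{v}$, and integration of $\D_{\tau}\mathbf{X}_{\perp} = \mathbf{V}_{\perp}$ together with $\D_{\tau}x_{||}=0$ yields $\mathbf{X}(\tau) = \mathbf{x}+\mathcal{R}_{1}(\tau)\,\mathbf{v}$. In particular $\D_{t}\mathbf{X} \equiv 0$, and $\nabla_{\mathbf{y}}\mathbf{X}$ is a constant-in-$\mathbf{y}$, block-triangular matrix with bounded explicit inverse, so Hypothesis~\ref{hyp_U0} follows from the assumed regularity of $f^{0}$, $(\mathbf{E}_{\epsilon})_{\epsilon>0}$ and $(\mathbf{B}_{\epsilon})_{\epsilon>0}$.

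Next I would match the recursive sequences. Because $v_{||}\bm{\mathcal{M}}$ is $\epsilon$-independent, the two-scale limits read $\bm{\mathcal{A}}_{0}(t,\tau,\mathbf{y}) = (v_{||}\bm{\mathcal{M}},\,\bm{\mathcal{L}}_{0}(t,\tau,\mathbf{x},\mathbf{v}))$ and $\bm{\mathcal{A}}_{i}(t,\tau,\mathbf{y}) = (0,\,\bm{\mathcal{L}}_{i}(t,\tau,\mathbf{x},\mathbf{v}))$ for $i\geq 1$, so Hypotheses~\ref{Hilbert_Aeps} and \ref{Hilbert_ueps_kth} are exactly what Hypotheses~\ref{hyp_EiBi_FLR} and \ref{hyp_fi_FLR} provide. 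Using the explicit flow and the identity $-\mathcal{R}_{1}(\tau)\,\mathcal{R}_{2}(-\tau) = \mathcal{R}_{1}(-\tau)$, a direct calculation then shows that $\tilde{\mathbf{a}}_{0} = (v_{||}\bm{\mathcal{M}}+\mathcal{J}_{1}(\bm{\mathcal{L}}_{0}),\,\mathcal{J}_{2}(\bm{\mathcal{L}}_{0}))$ and $\tilde{\mathbf{a}}_{i} = (\mathcal{J}_{1}(\bm{\mathcal{L}}_{i}),\,\mathcal{J}_{2}(\bm{\mathcal{L}}_{i}))$ for $i\geq 1$, which are exactly the coefficients appearing on the left-hand side of the announced transport equation for $G_{k}$. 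Similarly, substituting the block forms of $\bm{\alpha}_{i}$ and $\mathbf{a}_{i}$ into \eqref{def_Wi} and \eqref{def_Ri} reproduces formula \eqref{def_Wi_FLR} and the stated expression of $R_{k-1}$.

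Once this dictionary is established, Theorem~\ref{CV_Uk} provides the two-scale decomposition
\begin{equation*}
F_{k}(t,\tau,\mathbf{x},\mathbf{v}) = G_{k}\bigl(t,\mathbf{X}(-\tau;\mathbf{y},t;0)\bigr) + W_{k}\bigl(t,\tau,\mathbf{X}(-\tau;\mathbf{y},t;0)\bigr) \, ,
\end{equation*}
which becomes exactly the stated formula since $\mathbf{X}(-\tau;\mathbf{y},t;0) = (\mathbf{x}+\mathcal{R}_{1}(-\tau)\mathbf{v},\,\mathcal{R}_{2}(-\tau)\mathbf{v})$. Injecting the identifications of $\tilde{\mathbf{a}}_{i}$, $\bm{\alpha}_{i}$, $\mathbf{a}_{i}$ into \eqref{eq_Vk} then yields the transport equation for $G_{k}$ announced in the theorem.

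The main obstacle lies in this final substitution: matching the abstract period-averages $\frac{1}{\theta}\int_{0}^{\theta}\bm{\alpha}_{i}\cdot\nabla_{\mathbf{y}}W_{k-i}\,d\sigma$, which involve $\nabla_{\mathbf{y}}\mathbf{X}(\sigma-\tau)$, with the right-hand side of the $G_{k}$ equation, which features $\mathcal{R}_{1}(-\tau)\bm{\mathcal{L}}_{i}$ and $\mathcal{R}_{2}(-\tau)\bm{\mathcal{L}}_{i}$ evaluated along the flow. The key technical tool is translation invariance of the periodic integral under $\sigma \mapsto \sigma + \tau$, combined with the group property $\mathbf{X}(\sigma-\tau;\mathbf{y},t;0) = \mathbf{X}(\sigma;\mathbf{X}(-\tau;\mathbf{y},t;0),t;0)$ of the autonomous flow; once these are applied, the $\mathbf{v}$-linear structure $\bm{\mathcal{L}}_{i}=\bm{\mathcal{E}}_{i}+\mathbf{v}\times\bm{\mathcal{B}}_{i}$ converts the abstract integrals into the announced $\mathcal{J}_{1}$ and $\mathcal{J}_{2}$ averages.
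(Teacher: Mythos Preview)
Your proposal is correct and follows exactly the approach implicit in the paper: Theorem~\ref{TSCVk_FLR} is stated as a direct specialization of the abstract Theorem~\ref{CV_Uk}, and you have correctly carried out that specialization by identifying $\mathbf{A}_{\epsilon}$, $\mathbf{L}$, the explicit $2\pi$-periodic flow $(\mathbf{x}+\mathcal{R}_{1}(\tau)\mathbf{v},\,\mathcal{R}_{2}(\tau)\mathbf{v})$, and by translating $\bm{\alpha}_{i}$, $\tilde{\mathbf{a}}_{i}$, $W_{i}$, $R_{i}$ into the FLR-specific objects $\mathcal{J}_{1}$, $\mathcal{J}_{2}$, $\mathcal{R}_{1}$, $\mathcal{R}_{2}$. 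The paper itself does not spell out these computations, so your write-up is in fact more detailed than what appears there.
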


\subsection{Application to axisymmetric charged particle beams}

In this last example, we focus on the following axisymmetric linear Vlasov equation:
\begin{equation*}
\left\{
\begin{array}{l}
\displaystyle \D_{t}f_{\epsilon}(t,r,v_{r}) + \cfrac{v_{r}}{\epsilon}\,\D_{r}f_{\epsilon}(t,r,v_{r}) + \left(E_{\epsilon}(t,r)-\cfrac{r}{\epsilon}\right)\,\D_{v_{r}}f_{\epsilon}(t,r,v_{r}) = 0 \, , \\
f_{\epsilon}(t=0,r,v_{r}) = f^{0}(r,v_{r}) \, .
\end{array}
\right.
\end{equation*}
In this system, $f_{\epsilon} = f_{\epsilon}(t,r,v_{r})$ is the unknown distribution function of the particles, $E_{\epsilon} = E_{\epsilon}(t,r)$ is the radial component of the external magnetic field, the variables $(t,r,v_{r}) \in [0,T]\times \R \times \R$ stand for the time variable and the radial position and velocity variable, with the convention $f_{\epsilon}(t,r,v_{r}) = f_{\epsilon}(t,-r,-v_{r})$, $E_{\epsilon}(t,r) = -E_{\epsilon}(t,-r)$ (see \cite{Frenod-Gutnic-Hirstoaga,PIC-two-scale,Mouton_2009} for details). \\
\indent The two-scale convergence of $f_{\epsilon}$ at 0-th order has been studied by Fr\'enod, Sonnendr\"ucker and Salvarani in \cite{PIC-two-scale} in a more rich context. We recall here this result:
\begin{theorem}[Fr\'enod, Sonnendr\"ucker, Salvarani \cite{PIC-two-scale}]\label{TSCV_F0_axibeam}
We assume that the initial distribution $f^{0}$ is positive on $\R^{2}$ and that $f^{0} \in L^{1}(\R^{2};rdrdv_{r}) \cap L^{2}(\R^{2};rdrdv_{r})$. We also assume that the sequence $(E_{\epsilon})_{\epsilon\,>\,0}$ is bounded independently of $\epsilon$ in the space $L^{\infty}\left(0,T;W^{1,3/2}(K;rdr)\right)$ for any $K \subset \R$ compact. Then, up to the extraction of a subsequence, $f_{\epsilon}$ two-scale converges to the profile $F_{0} = F_{0}(t,\tau,r,v_{r})$ in $L^{\infty}\left(0,T;L_{\#}^{\infty}\left(0,2\pi;L^{2}(\R^{2};rdrdv_{r})\right)\right)$ and $E_{\epsilon}$ two-scale converges to $\mathcal{E}_{0} = \mathcal{E}_{0}(t,r,v_{r})$ in $L^{\infty}\left(0,T;L_{\#}^{\infty}\left(0,2\pi;W^{1,3/2}(K;rdr)\right)\right)$ for any $K \subset \R$ compact, with $F_{0}$ defined by
\begin{equation}
F_{0}(t,\tau,r,v_{r}) = G_{0}(t,r\cos\tau-v_{r}\sin\tau,r\sin\tau + v_{r}\cos\tau) \, ,
\end{equation}
with $G_{0} = G_{0}(t,r,v_{r}) \in L^{\infty}\left(0,T;L_{loc}^{2}(\R^{2};rdrdv_{r})\right)$ solution of
\begin{equation}
\left\{
\begin{array}{l}
\displaystyle \D_{t}G_{0} + \mathcal{J}_{1}(\mathcal{E}_{0})\,\D_{r}G_{0} + \mathcal{J}_{2}(\mathcal{E}_{0})\,\D_{v_{r}}G_{0} = 0 \, , \\
G_{0}(t=0,r,v_{r}) = f^{0}(r,v_{r}) \, ,
\end{array}
\right.
\end{equation}
where
\begin{align}
\mathcal{J}_{1}(\mathcal{E}_{0})(t,r,v_{r}) &= -\cfrac{1}{2\pi}\,\int_{0}^{2\pi} \sin(\tau) \, \mathcal{E}_{0}(t,\tau,r\cos\tau+v_{r}\sin\tau) \, d\tau \, , \\
\mathcal{J}_{2}(\mathcal{E}_{0})(t,r,v_{r}) &= \cfrac{1}{2\pi}\,\int_{0}^{2\pi} \cos(\tau) \, \mathcal{E}_{0}(t,\tau,r\cos\tau+v_{r}\sin\tau) \, d\tau \, .
\end{align}
\end{theorem}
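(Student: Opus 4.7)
The strategy is to recognise the axibeam equation as a particular instance of the generic convection equation \eqref{convection} and to obtain the result directly from Theorem \ref{def_U0}. I set $n = 2$, $\mathbf{x} = (r, v_r) \in \R^2$,
\[
\mathbf{A}_\epsilon(t,\mathbf{x}) = (0,\, E_\epsilon(t,r)), \qquad \mathbf{L}(t,\tau,\mathbf{x}) = (v_r,\, -r), \qquad \theta = 2\pi.
\]
Both vector fields are divergence-free in $\mathbf{x}$, and $\mathbf{L}$ is trivially $2\pi$-periodic in $\tau$, being independent of $\tau$. The regularity index $q = 3/2$ gives $q' = 6$ in dimension $n=2$, and $p = 2$ is admissible in Hypothesis \ref{hyp_U0}.

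The characteristic system $\D_\tau R = V$, $\D_\tau V = -R$ with $(R,V)(0) = (r,v_r)$ integrates to the planar rotation
\[
\mathbf{X}(\tau; r, v_r, t; 0) = \bigl(r\cos\tau + v_r\sin\tau,\ -r\sin\tau + v_r\cos\tau\bigr),
\]
so $\D_t\mathbf{X} \equiv 0$ and $\nabla_\mathbf{x}\mathbf{X}(\tau)$ is the clockwise rotation matrix with inverse the anticlockwise one; the smoothness clauses of Hypothesis \ref{hyp_U0} are then immediate. Next I would establish the $\epsilon$-uniform bound of $f_\epsilon$ in $L^\infty(0,T; L^2(\R^2; r\,dr\,dv_r))$: multiplying the equation by $r f_\epsilon$ and integrating by parts over $\R^2$, the $-r/\epsilon$ drift and the $E_\epsilon$ drift produce integrands depending only on $r$ after differentiation in $v_r$, hence vanish, while the $v_r/\epsilon\,\D_r$ piece leaves the residual $-\tfrac{1}{2\epsilon}\int v_r f_\epsilon^2\,dr\,dv_r$, which is zero thanks to the oddness of the integrand under $(r,v_r) \mapsto (-r,-v_r)$ and the imposed symmetry $f_\epsilon(t,-r,-v_r) = f_\epsilon(t,r,v_r)$. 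One deduces $\|f_\epsilon(t)\|_{L^2(r\,dr\,dv_r)} = \|f^0\|_{L^2(r\,dr\,dv_r)}$ for every $t$.

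Theorem \ref{def_U0} then delivers a two-scale limit of the form $F_0(t,\tau,r,v_r) = V_0\bigl(t,\mathbf{X}(-\tau; (r,v_r),t;0)\bigr)$, which reproduces the stated expression for $G_0$. It remains to evaluate the averaged drift $\tilde{\mathbf{a}}_0$. With $\D_t\mathbf{X} = 0$ and $\bm{\mathcal{A}}_0 = (0,\mathcal{E}_0)$, the definition of $\bm{\alpha}_0$ collapses to
\[
\bm{\alpha}_0(t,\tau,r,v_r) = \mathcal{E}_0\bigl(t,\tau,\,r\cos\tau + v_r\sin\tau\bigr)\,\bigl(-\sin\tau,\,\cos\tau\bigr),
\]
whose $\tau$-average on $[0,2\pi]$ is exactly $\bigl(\mathcal{J}_1(\mathcal{E}_0),\mathcal{J}_2(\mathcal{E}_0)\bigr)$. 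Substituting in \eqref{eq_V0} yields the stated transport equation for $G_0$ with initial datum $f^0$.

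The main obstacle is that the generic framework is formulated in unweighted $L^p$, while the natural functional space here is the weighted $L^2(\R^2; r\,dr\,dv_r)$ and the field bound lives in $W^{1,3/2}(K; r\,dr)$. The cleanest route is to lift to the 2D Cartesian phase space $(y,w) \in \R^4$ via $y = (r\cos\varphi, r\sin\varphi)$ and the associated velocity: axisymmetry turns the weighted norm into the standard $L^2(\R^4)$ norm, and Theorem \ref{def_U0} applies verbatim with $n = 4$, after which one projects back onto the axisymmetric sector. Alternatively, one adapts the two-scale compactness and duality steps of Theorem \ref{def_U0} directly to the weight $r$, which is legitimate thanks to the same symmetry cancellations already used in the energy estimate and avoids leaving the one-dimensional $(r,v_r)$ picture.
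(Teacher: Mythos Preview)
Your proposal is correct and is exactly the approach the paper intends: the paper does not give an independent proof of this theorem but simply recalls it from \cite{PIC-two-scale}, and its implicit derivation is precisely the specialisation of Theorem~\ref{def_U0} to $n=2$, $\mathbf{A}_\epsilon=(0,E_\epsilon)$, $\mathbf{L}=(v_r,-r)$ that you carry out. Your computation of the characteristics $\mathbf{X}(\tau)$, of $\bm{\alpha}_0$, and of the averaged drift $\tilde{\mathbf{a}}_0=(\mathcal{J}_1(\mathcal{E}_0),\mathcal{J}_2(\mathcal{E}_0))$ is accurate, and the identification $U_0=F_0$, $V_0=G_0$ via \eqref{link_U0V0}--\eqref{eq_V0} reproduces the stated limit system.

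The one point you correctly flag, and which the paper glosses over, is the mismatch between the unweighted $L^p(\R^n)$ framework of Theorem~\ref{def_U0} and the weighted space $L^2(\R^2;r\,dr\,dv_r)$. Of your two proposed fixes, the lift to the Cartesian $(y,w)\in\R^2\times\R^2$ phase space is the one actually used in \cite{PIC-two-scale}: the axisymmetry hypothesis together with the zero-angular-momentum assumption makes the four-dimensional problem equivalent to the $(r,v_r)$ one, and the weighted norm becomes the flat $L^2(\R^4)$ norm, so Theorem~\ref{def_U0} applies without modification. Your alternative of redoing the compactness and duality arguments directly in the weighted space would also work but is less economical.
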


In order to establish higher order two-scale convergence results, it is necessary to add some hypotheses on the external electric field $E_{\epsilon}$. As in the previous paragraphes, we consider a fixed integer $k > 0$ and we formalize it as follows:
\begin{hypothesis}\label{hyp_Ei_axibeam}
Defining recursively the sequence $(E_{\epsilon,i})_{\epsilon\,>\,0}$ as
\begin{equation*}
\left\{
\begin{array}{ll}
E_{\epsilon,i}(t,r) = \cfrac{1}{\epsilon}\,\left(E_{\epsilon,i-1}(t,r)-\mathcal{E}_{i-1}\left(t,\cfrac{t}{\epsilon},r\right)\right) \, , &\forall\,i=1,\dots,k \, , \\
E_{\epsilon,0}(t,r) = E_{\epsilon}(t,r) \, , &
\end{array}
\right.
\end{equation*}
we assume that, for all $i=0,\dots,k$, $(E_{\epsilon,i})_{\epsilon\,>\,0}$ two-scale converges to the profile $\mathcal{E}_{i} = \mathcal{E}_{i}(t,\tau,r)$ in $L^{\infty}\left(0,T;L_{\#}^{\infty}\left(0,2\pi;W^{1,3/2}(K;rdr)\right)\right)$ for any $K \subset \R$ compact.
\end{hypothesis}

We also add some hypotheses about the two-scale convergence of $f_{\epsilon}$ at $i$-th order for $i=0,\dots,k-1$:
\begin{hypothesis}\label{hyp_fi_axibeam}
Defining recursively the sequence $(f_{\epsilon,i})_{\epsilon\,>\,0}$ as
\begin{equation*}
\left\{
\begin{array}{ll}
f_{\epsilon,i}(t,r,v_{r}) = \cfrac{1}{\epsilon}\,\left(f_{\epsilon,i-1}(t,r,v_{r})-F_{i-1}\left(t,\cfrac{t}{\epsilon},r,v_{r}\right)\right) \, , &\forall\,i=1,\dots,k-1 \, , \\
f_{\epsilon,0}(t,r,v_{r}) = f_{\epsilon}(t,r,v_{r}) \, , &
\end{array}
\right.
\end{equation*}
we assume that, up to the extraction of a subsequence, $(f_{\epsilon,i})_{\epsilon\,>\,0}$ two-scale converges to $F_{i} = F_{i}(t,\tau,r,v_{r})$ in $L^{\infty}\left(0,T;L_{\#}^{\infty}\left(0,2\pi;L^{2}(\R^{2};rdrdvr)\right)\right)$ for $i=0,\dots,k-1$. \\
\end{hypothesis}

Hence we can define recursively $W_{0},\dots,W_{k}$ as follows:
\begin{equation} \label{def_Wi_axibeam}
\begin{split}
W_{i}(t,\tau,r,v_{r}) &= \int_{0}^{\tau} \Bigg[\sum_{j=0}^{i-1} \left(
\begin{array}{c}
\mathcal{J}_{1}(\mathcal{E}_{j})(t,r,v_{r}) + \sin(\sigma)\,\mathcal{E}_{j}(t,\sigma,r\cos\sigma+v_{r}\sin\sigma) \\
\mathcal{J}_{2}(\mathcal{E}_{j})(t,r,v_{r}) - \cos(\sigma)\,\mathcal{E}_{j}(t,\sigma,r\cos\sigma+v_{r}\sin\sigma)
\end{array}
\right) \\
&\qquad \qquad \qquad \qquad \cdot \left(
\begin{array}{c}
\D_{r}G_{i-1-j}(t,r,v_{r}) + \D_{r}W_{i-1-j}(t,\sigma,r,v_{r}) \\
\D_{v_{r}}G_{i-1-j}(t,r,v_{r}) + \D_{v_{r}}W_{i-1-j}(t,\sigma,r,v_{r})
\end{array}
\right) \\
&\qquad \qquad - \D_{t}W_{k-1}(t,\sigma,r,v_{r}) + \cfrac{1}{2\pi}\,\int_{0}^{2\pi} \D_{t}W_{k-1}(t,\zeta,r,v_{r})\,d\zeta \Bigg] \, d\sigma \, , 
\end{split}
\end{equation}
where $G_{0},\dots,G_{k-1}$ are linked to $F_{0},\dots,F_{k-1}$ by the relations
\begin{equation*}
\begin{split}
F_{i}(t,\tau,r,v_{r}) &= G_{i}(t,r\cos\tau-v_{r}\sin\tau,r\sin\tau + v_{r}\cos\tau) + W_{i}(t,\tau,r\cos\tau-v_{r}\sin\tau,r\sin\tau + v_{r}\cos\tau) \, .
\end{split}
\end{equation*}
We finally introduce the function $R_{k-1}$ defined by
\begin{equation}
\begin{split}
R_{k-1}(t,\tau,r,v_{r}) = \D_{t}F_{k-1}(t,\tau,r,v_{r}) + \sum_{j=0}^{k-1} \Bigg[ \cfrac{1}{2\pi}\int_{0}^{2\pi} \left(
\begin{array}{c}
\sin(\sigma)\mathcal{E}_{j}(t,\sigma+\tau,r\cos\sigma+v_{r}\sin\sigma) \\
\cos(\sigma)\mathcal{E}_{j}(t,\sigma+\tau,r\cos\sigma+v_{r}\sin\sigma)
\end{array}
\right) d\sigma \\
\cdot \left(
\begin{array}{c}
\D_{r}F_{k-1-j}(t,\tau,r,v_{r}) \\
\D_{v_{r}}F_{k-1-j}(t,\tau,r,v_{r})
\end{array}
\right) \Bigg] \, .
\end{split}
\end{equation}
Hence we can extend the main result of \cite{Frenod-Gutnic-Hirstoaga} to the $k$-th order:
\begin{theorem}\label{TSCV_Fk_axibeam}
We assume that the hypotheses of Theorem \ref{TSCV_F0_axibeam} and Hypotheses \ref{hyp_Ei_axibeam} and \ref{hyp_fi_axibeam} are satisfied for a fixed $k \in \N^{*}$. In addition, taking $s'$ such that $\frac{1}{s'} = 1 - \frac{1}{q} - \frac{1}{r}$ with $r \in [1,\frac{2q}{2-q}[$ and defining $X^{s'}(K;rdrdv_{r}) = \left(W^{1,q}(K;rdrdv_{r})\right)' \cup \left(W^{1,s'}(K;rdrdv_{r})\right)$, we assume that, for any compact subset $K \subset \R^{2}$,
\begin{itemize}
\item $W_{k} \in L^{\infty}\left(0,T;L_{\#}^{\infty}\left(0,2\pi;L^{2}(K;rdrdv_{r})\right)\right)$, 
\item $\D_{t}W_{k},R_{k-1} \in L^{\infty}\left(0,T;L_{\#}^{\infty}\left(0,2\pi;X^{s'}(K;rdrdv_{r})\right)\right)$.
\end{itemize}
Then, if the sequence $(f_{\epsilon,k})_{\epsilon\,>\,0}$ defined by
\begin{equation*}
f_{\epsilon,k}(t,r,v_{r}) = \cfrac{1}{\epsilon}\,\left(f_{\epsilon,k-1}(t,r,v_{r})-F_{k-1}\left(t,\cfrac{t}{\epsilon},r,v_{r}\right)\right) \, ,
\end{equation*}
is bounded independently of $\epsilon$ in $L^{\infty}\left(0,T;L_{loc}^{2}(\R^{2};rdrdv_{r})\right)$, it two-scale converges to the profile $F_{k}=F_{k}(t,\tau,r,v_{r})$ in $L^{\infty}\left(0,T;L_{\#}^{\infty}\left(0,2\pi;L^{2}(\R^{2};rdrdv_{r})\right)\right)$ with
\begin{equation}
\begin{split}
F_{k}(t,\tau,r,v_{r}) &= G_{k}(t,r\cos\tau-v_{r}\sin\tau,r\sin\tau + v_{r}\cos\tau) \\
&\qquad + W_{k}(t,\tau,r\cos\tau-v_{r}\sin\tau,r\sin\tau + v_{r}\cos\tau) \, ,
\end{split}
\end{equation}
where $W_{k}$ is defined in \eqref{def_Wi_axibeam} and where $G_{k} = G_{k}(t,r,v_{r}) \in L^{\infty}\left(0,T;L_{loc}^{2}(\R^{2};rdrdv_{r})\right)$ is the solution of
\begin{equation}\label{def_Gk_axibeam}
\left\{
\begin{array}{l}
\begin{split}
\D_{t}&G_{k}(t,r,v_{r}) + \mathcal{J}_{1}(\mathcal{E}_{0})(t,r,v_{r})\,\D_{r}G_{k}(t,r,v_{r}) + \mathcal{J}_{2}(\mathcal{E}_{0})(t,r,v_{r})\,\D_{v_{r}}G_{k}(t,r,v_{r}) \\
&= -\cfrac{1}{2\pi}\int_{0}^{2\pi} \D_{t}W_{k}(t,\tau,r,v_{r}) \, d\tau \\
&\quad +\cfrac{1}{2\pi} \sum_{i\,=\,0}^{k}\int_{0}^{2\pi} \sin(\tau)\mathcal{E}_{i}(t,\tau,r\sin\tau+v_{r}\sin\tau)\,\D_{r}W_{k-i}(t,\tau,r,v_{r}) \, d\tau \\
&\quad -\cfrac{1}{2\pi} \sum_{i\,=\,0}^{k}\int_{0}^{2\pi} \cos(\tau)\mathcal{E}_{i}(t,\tau,r\sin\tau+v_{r}\sin\tau)\,\D_{v_{r}}W_{k-i}(t,\tau,r,v_{r}) \, d\tau \\
&\quad - \sum_{i\,=\,1}^{k} \mathcal{J}_{1}(\mathcal{E}_{i})(t,r,v_{r})\,\D_{r}G_{k-i}(t,r,v_{r}) - \sum_{i\,=\,1}^{k} \mathcal{J}_{2}(\mathcal{E}_{i})(t,r,v_{r})\,\D_{v_{r}}G_{k-i}(t,r,v_{r}) \, ,
\end{split}
\\
G_{k}(t=0,r,v_{r}) = 0 \, .
\end{array}
\right.
\end{equation}
\end{theorem}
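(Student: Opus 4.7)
The strategy is to identify equation \eqref{Vlasov_axibeam_intro} as a special case of the generic convection equation \eqref{convection} and then invoke Theorem \ref{CV_Uk}. I would set $n = 2$, identify the state variable with $(r,v_{r})$, and take $\mathbf{A}_{\epsilon}(t,r,v_{r}) = (0, E_{\epsilon}(t,r))$ and $\mathbf{L}(t,\tau,r,v_{r}) = (v_{r}, -r)$. Since $\mathbf{L}$ has no $t$- or $\tau$-dependence and is linear in $(r,v_{r})$, the associated flow is the rigid $2\pi$-rotation
\[
\mathbf{X}(\tau;(r,v_{r}),t;0) = (r\cos\tau + v_{r}\sin\tau,\, -r\sin\tau + v_{r}\cos\tau),
\]
and both $\mathbf{A}_{\epsilon}$ and $\mathbf{L}$ are divergence-free in $(r,v_{r})$. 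All the smoothness requirements on the flow demanded by Hypothesis \ref{hyp_U0} reduce to boundedness of trigonometric expressions and are automatic; the remaining pieces of Hypotheses \ref{hyp_U0}, \ref{Hilbert_Aeps}, \ref{Hilbert_ueps_kth} follow from the assumptions on $f^{0}$ inherited from Theorem \ref{TSCV_F0_axibeam} together with the present Hypotheses \ref{hyp_Ei_axibeam} and \ref{hyp_fi_axibeam}, once one sets $\bm{\mathcal{A}}_{i} = (0, \mathcal{E}_{i})$.

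The second step is to make the abstract coefficients explicit. The inverse Jacobian $(\nabla_{(r,v_{r})} \mathbf{X})^{-1}(\tau)$ is the transpose rotation, so its action on $(0,\mathcal{E}_{i})$ evaluated at the shifted point gives
\[
\bm{\alpha}_{i}(t,\tau,r,v_{r}) = \bigl(-\sin\tau\, \mathcal{E}_{i}(t,\tau,r\cos\tau+v_{r}\sin\tau),\ \cos\tau\,\mathcal{E}_{i}(t,\tau,r\cos\tau+v_{r}\sin\tau)\bigr),
\]
whose $\tau$-average is precisely $(\mathcal{J}_{1}(\mathcal{E}_{i}),\mathcal{J}_{2}(\mathcal{E}_{i}))$; the analogous computation at the shift $\mathbf{X}(-\tau;\cdot)$ produces $\mathbf{a}_{i}$. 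Substituting these expressions into \eqref{def_Wi} and \eqref{def_Ri} and identifying $U_{i-1-j}$ with $F_{i-1-j}$ collapses the abstract recursions to the formula \eqref{def_Wi_axibeam} and to the stated expression of $R_{k-1}$, once one absorbs the characteristic shift into the decomposition $F_{i} = G_{i}(\cdot,\mathbf{X}(-\tau;\cdot)) + W_{i}(\cdot,\mathbf{X}(-\tau;\cdot))$. With these identifications, the regularity hypotheses on $W_{k}$, $\D_{t}W_{k}$, $R_{k-1}$ assumed here are exactly those of Theorem \ref{CV_Uk}, which then delivers $F_{k}$ in the announced form with $G_{k}$ satisfying the transport equation \eqref{eq_Vk}; direct substitution of the explicit $\bm{\alpha}_{i}$ turns \eqref{eq_Vk} into \eqref{def_Gk_axibeam}.

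The conceptual content is therefore minimal, the proof being essentially an application of Theorem \ref{CV_Uk}. The main obstacle I foresee is bookkeeping: one must faithfully match each term of \eqref{def_Wi} and \eqref{def_Ri} with its axisymmetric counterpart, including the averaged $\D_{t}W_{k-1}$ correction and the convolution-like argument shift $\sigma+\tau$ in the expression of $R_{k-1}$. A secondary technical point is that the natural functional setting is $L^{2}(\R^{2}; rdrdv_{r})$: one must verify that the abstract arguments of Theorem \ref{CV_Uk}, stated in Lebesgue $L^{p}$-spaces, carry over to this weighted setting, using the divergence-free character of $\mathbf{L}$ together with the oddness conventions $f_{\epsilon}(t,r,v_{r}) = f_{\epsilon}(t,-r,-v_{r})$ and $E_{\epsilon}(t,r) = -E_{\epsilon}(t,-r)$ to ensure compatibility of the weight $rdr$ with the rotating characteristic flow.
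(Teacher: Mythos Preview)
Your proposal is correct and follows exactly the paper's intended approach: the paper does not provide a separate proof of Theorem~\ref{TSCV_Fk_axibeam} but presents it as a direct specialization of the generic Theorem~\ref{CV_Uk}, obtained by the very identifications $\mathbf{x}=(r,v_r)$, $\mathbf{A}_{\epsilon}=(0,E_{\epsilon})$, $\mathbf{L}=(v_r,-r)$ you describe. Your remarks on the explicit rotation flow, the computation of $\bm{\alpha}_i$, and the bookkeeping needed to match \eqref{def_Wi}--\eqref{def_Ri} with \eqref{def_Wi_axibeam} and the stated $R_{k-1}$, as well as the weighted-$L^2$ caveat, are all to the point.
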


\section{Characterization of each $U_{k}$}

\indent In this section, we aim to prove the two-scale convergence results presented in Theorems \ref{def_U0}, \ref{eq_U0}, \ref{CV_Uk} and \ref{cor_eq_Uk}. For this purpose, we choose to detail the proofs on the generic equation of the form
\begin{equation}\label{eq_geps_generic}
\hspace{-0.3em}\left\{
\begin{array}{l}
\displaystyle \D_{t}g_{\epsilon}(t,\mathbf{x}) + \mathbf{A}_{\epsilon}(t,\mathbf{x}) \cdot \nabla_{\mathbf{x}}g_{\epsilon}(t,\mathbf{x}) + \cfrac{1}{\epsilon}\,\mathbf{L}\left(t,\cfrac{t}{\epsilon},\mathbf{x}\right) \cdot \nabla_{\mathbf{x}}g_{\epsilon}(t,\mathbf{x}) = \cfrac{1}{\epsilon}\,f_{\epsilon}(t,\mathbf{x}) \, , \\
g_{\epsilon}(t=0,\mathbf{x}) = g^{0}(\mathbf{x}) \, ,
\end{array}
\right.
\end{equation}
in which $f_{\epsilon}$, $\mathbf{A}_{\epsilon}$ and $\mathbf{L}$ are known and where $g_{\epsilon}$ is the unknown. The next lines are structured as follows: first, we detail some two-scale convergence results for the model \eqref{eq_geps_generic} under some well-chosen hypotheses for $\mathbf{A}_{\epsilon}$, $\mathbf{L}$ and $f_{\epsilon}$. Then we apply these results onto the equations satisfied by each $u_{\epsilon,i}$ recursively defined thanks to Hypothesis \ref{def_uepsi}.

\subsection{Two-scale convergence of $g_{\epsilon}$}

We aim to establish some two-scale convergence results for the sequence $(g_{\epsilon})_{\epsilon\,>\,0}$ under some well-chosen hypotheses for $\mathbf{A}_{\epsilon}$, $\mathbf{L}$ and $f_{\epsilon}$. These results are detailed in the following theorem:

\begin{theorem}\label{TSCV_g}
We consider $s' > 0$ such that $\frac{1}{s'} = 1-\frac{1}{q}-\frac{1}{r}$ with $r \in [1,\frac{nq}{n-q}[$ and, for all compact subset $K \subset \R^{n}$, we define $X^{s'}(K) = \left(W_{0}^{1,s'}(K)\right)' \cup \left(W_{0}^{1,q}(K)\right)'$. We assume that $\mathbf{A}_{\epsilon}$ and $\mathbf{L}$ satisfy Hypotheses \ref{hyp_U0} and that $g^{0}$ and $(f_{\epsilon})_{\epsilon\,>\,0}$ have the following properties:
\begin{itemize}
\item $g^{0} \in L^{p}(\R^{n})$, 
\item $f_{\epsilon}$ is bounded independently of $\epsilon$ in $W^{1,\infty}\left(0,T;X^{s'}(K)\right)$ and admits $F = F(t,\tau,\mathbf{x})$ as a two-scale limit in $L^{\infty}\left(0,T;L_{\#}^{\infty}\left(0,\theta;X^{s'}(K)\right)\right)$,
\item $F$ satisfies
\begin{equation} \label{Speriodic}
\forall\, (t,\mathbf{x}) \, , \qquad \int_{0}^{\theta} F(t,\tau,\mathbf{X}\left(\tau;\mathbf{x},t;0)\right)\, d\tau = 0 \, ,
\end{equation}
\item The sequence $(f_{\epsilon,1})_{\epsilon\,>\,0}$ defined by
\begin{equation*}
f_{\epsilon,1}(t,\mathbf{x}) = \frac{1}{\epsilon}\, \left(f_{\epsilon}(t,\mathbf{x})-F\left(t,\cfrac{t}{\epsilon},\mathbf{x}\right)\right) \, ,
\end{equation*}
is bounded independently in $L^{\infty}\left(0,T;X^{s'}(K)\right)$ and two-scale converges to the profile $F_{1} = F_{1}(t,\tau,\mathbf{x})$ in $L^{\infty}\left(0,T;L_{\#}^{\infty}\left(0,\theta;X^{s'}(K)\right)\right)$,
\item Defining the function $S = S(t,\tau,\mathbf{x})$ as
\begin{equation*}
S(t,\tau,\mathbf{x}) = \int_{0}^{\tau} F(t,\sigma,\mathbf{X}\left(\sigma;\mathbf{x},t;0)\right)\, d\sigma \, ,
\end{equation*}
we assume that $S$ lies in $L^{\infty}\left(0,T;L_{\#}^{\infty}\left(0,\theta;L_{loc}^{p}(\R^{n})\right)\right)$ and that $\D_{t}S$ lies in $L^{\infty}\left(0,T;L_{\#}^{\infty}\left(0,\theta;X^{s'}(K)\right)\right)$.
\end{itemize}
If $(g_{\epsilon})_{\epsilon\,>\,0}$ is bounded in $L^{\infty}\left(0,T;L_{loc}^{p}(\R^{n})\right)$, it admits a two-scale limit $G = G(t,\tau,\mathbf{x})$ in the space $L^{\infty}\left(0,T;L_{\#}^{\infty}\left(0,\theta;L^{p}(\R^{n})\right)\right)$ and $G$ is characterized thanks to the relation
\begin{equation}
G(t,\tau,\mathbf{x}) = H\left(t,\mathbf{X}(-\tau;\mathbf{x},t;0)\right) + S\left(t,\tau,\mathbf{X}(-\tau;\mathbf{x},t;0)\right) \, ,
\end{equation}
where $H = H(t,\mathbf{x}) \in L^{\infty}\left(0,T;L_{loc}^{p}(\R^{n})\right)$ satisfies
\begin{equation}\label{eq_H}
\left\{
\begin{array}{l}
\begin{split}
\D_{t}H&(t,\mathbf{x}) + \tilde{\mathbf{a}}_{0}(t,\mathbf{x}) \cdot \nabla_{\mathbf{x}}H(t,\mathbf{x}) \\
&= \cfrac{1}{\theta}\, \int_{0}^{\theta} F_{1}\left(t,\tau,\mathbf{X}(\tau;\mathbf{x},t;0)\right) \, d\tau -\cfrac{1}{\theta}\,\int_{0}^{\theta} \left[\D_{t}S(t,\tau,\mathbf{x}) - \bm{\alpha}_{0}(t,\tau,\mathbf{x})\cdot\nabla_{\mathbf{x}}S(t,\tau,\mathbf{x})\right] d\tau \, ,
\end{split}
\\
H(t=0,\mathbf{x}) = g^{0}(\mathbf{x}) \, . 
\end{array}
\right.
\end{equation}
\end{theorem}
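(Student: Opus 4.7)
The plan is to first remove the singular $\tfrac{1}{\epsilon}$ source from \eqref{eq_geps_generic} by a well-chosen substitution, and then to treat the remaining problem as a perturbation of \eqref{convection} to which the same machinery as in Theorem \ref{def_U0} applies. Set $S^{*}(t,\tau,\mathbf{x}):=S(t,\tau,\mathbf{X}(-\tau;\mathbf{x},t;0))$, which is $\theta$-periodic in $\tau$ by \eqref{Speriodic} and, by differentiation along the characteristic flow using $\partial_{\tau}\mathbf{X}=\mathbf{L}$, satisfies $\partial_{\tau}S^{*}+\mathbf{L}\cdot\nabla_{\mathbf{x}}S^{*}=F$ pointwise. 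Introduce $\tilde g_{\epsilon}(t,\mathbf{x}):=g_{\epsilon}(t,\mathbf{x})-S^{*}(t,t/\epsilon,\mathbf{x})$; a direct calculation will give
\begin{equation*}
\partial_{t}\tilde g_{\epsilon} + \mathbf{A}_{\epsilon}\cdot\nabla_{\mathbf{x}}\tilde g_{\epsilon} + \tfrac{1}{\epsilon}\mathbf{L}\bigl(t,\tfrac{t}{\epsilon},\mathbf{x}\bigr)\cdot\nabla_{\mathbf{x}}\tilde g_{\epsilon} = f_{\epsilon,1}(t,\mathbf{x}) - \bigl(\partial_{t}S^{*}+\mathbf{A}_{\epsilon}\cdot\nabla_{\mathbf{x}}S^{*}\bigr)(t,t/\epsilon,\mathbf{x}),
\end{equation*}
with initial datum $g^{0}$ since $S^{*}(t,0,\cdot)=0$. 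The new right-hand side is now $O(1)$ rather than $O(\epsilon^{-1})$ and, under the stated hypotheses, is bounded in $L^{\infty}(0,T;X^{s'}(K))$ uniformly in $\epsilon$.

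\textbf{Cell equation and characteristic representation.} Since $(\tilde g_{\epsilon})_{\epsilon>0}$ remains bounded in $L^{\infty}(0,T;L^{p}_{loc}(\R^{n}))$, Theorem \ref{TSCV_Allaire} produces a two-scale limit $\tilde G$ along a subsequence. I would test the PDE for $\tilde g_{\epsilon}$ against $\epsilon\,\psi(t,t/\epsilon,\mathbf{x})$ with $\psi$ smooth, compactly supported in $\mathbf{x}$ and $\theta$-periodic in $\tau$; letting $\epsilon\to 0$, every $O(1)$ contribution vanishes and the leading-order balance yields the cell equation
\begin{equation*}
\partial_{\tau}\tilde G(t,\tau,\mathbf{x}) + \mathbf{L}(t,\tau,\mathbf{x})\cdot\nabla_{\mathbf{x}}\tilde G(t,\tau,\mathbf{x}) = 0 \, .
\end{equation*}
Integration along the $\theta$-periodic characteristics $\sigma\mapsto\mathbf{X}(\sigma;\mathbf{y},t;0)$ then forces $\tilde G$ to be constant on each orbit, so $\tilde G(t,\tau,\mathbf{x})=H(t,\mathbf{X}(-\tau;\mathbf{x},t;0))$ for some $H\in L^{\infty}(0,T;L^{p}_{loc}(\R^{n}))$, and $G:=\tilde G+S^{*}$ delivers exactly the announced representation of the two-scale limit of $g_{\epsilon}$.

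\textbf{Equation for $H$ and main obstacle.} To determine $H$, I would follow the strategy underlying Theorem \ref{def_U0}: test the PDE for $\tilde g_{\epsilon}$ against $\psi(t,\tau,\mathbf{x})=\phi(t,\mathbf{X}(-\tau;\mathbf{x},t;0))$, chosen so that $\partial_{\tau}\psi+\mathbf{L}\cdot\nabla_{\mathbf{x}}\psi=0$ and the singular part of the weak formulation disappears identically, then pass to the two-scale limit. Performing the volume-preserving change of variables $\mathbf{x}\mapsto\mathbf{X}(\tau;\mathbf{y},t;0)$ along the $\mathbf{L}$-flow (Jacobian one because $\nabla_{\mathbf{x}}\cdot\mathbf{L}=0$), the $\mathbf{A}_{\epsilon}\cdot\nabla_{\mathbf{x}}\tilde g_{\epsilon}$ term will produce, through the definitions of $\bm{\alpha}_{0}$ and $\tilde{\mathbf{a}}_{0}$, the drift $\tilde{\mathbf{a}}_{0}(t,\mathbf{y})\cdot\nabla_{\mathbf{y}}H$; the two-scale limit of $f_{\epsilon,1}$ will produce $\tfrac{1}{\theta}\int_{0}^{\theta}F_{1}(t,\tau,\mathbf{X}(\tau;\mathbf{y},t;0))\,d\tau$; and the supplementary source $\partial_{t}S^{*}+\mathbf{A}_{\epsilon}\cdot\nabla_{\mathbf{x}}S^{*}$, evaluated along the flow and averaged, will reduce via the chain rule and the definition of $\bm{\alpha}_{0}$ to the second source in \eqref{eq_H}. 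Collecting these contributions gives \eqref{eq_H}, with initial datum $H(0,\cdot)=g^{0}$ since $\tilde g_{\epsilon}(0,\mathbf{x})=g^{0}(\mathbf{x})$.

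\textbf{Main obstacle.} The delicate point will be to justify each of these duality passages rigorously: the source terms live only in the weaker space $X^{s'}(K)=(W_{0}^{1,q}(K))'\cup(W_{0}^{1,s'}(K))'$ while $\tilde g_{\epsilon}\in L^{p}$, so the pairings appearing in the weak formulation must be read in a carefully chosen intermediate Sobolev scale. It is precisely the Sobolev-type relation $\tfrac{1}{s'}=1-\tfrac{1}{q}-\tfrac{1}{r}$ with $r<\tfrac{nq}{n-q}$ that makes the products $\mathbf{A}_{\epsilon}\cdot\nabla_{\mathbf{x}}\tilde g_{\epsilon}$ and $\mathbf{A}_{\epsilon}\cdot\nabla_{\mathbf{x}}S^{*}$ meaningful in this scale and compatible with the two-scale passage, via Sobolev embedding together with the uniform $W^{1,q}$ bound on $\mathbf{A}_{\epsilon}$ and the regularity of $S$ and $\partial_{t}S$ built into the hypotheses.
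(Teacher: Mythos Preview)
Your reduction to a bounded source and the derivation of the cell equation are fine, and after the change of variables $\mathbf{x}\mapsto\mathbf{X}(\tau;\mathbf{y},t;0)$ your $\tilde g_{\epsilon}$ composed with the flow is exactly the paper's auxiliary function $h_{\epsilon}$. So the two approaches coincide up to that point. The genuine gap is in your third step: you write ``then pass to the two-scale limit'' in the weak formulation of the $\tilde g_{\epsilon}$ equation, but this limit involves the product $\tilde g_{\epsilon}\,\mathbf{A}_{\epsilon}\cdot\nabla_{\mathbf{x}}\psi(t,t/\epsilon,\mathbf{x})$ (or, after the change of variables, $h_{\epsilon}\,\tilde{\mathbf{A}}_{\epsilon}\cdot\nabla_{\mathbf{y}}\phi$), and both $\tilde g_{\epsilon}$ (resp.\ $h_{\epsilon}$) and $\mathbf{A}_{\epsilon}$ converge only weakly/two-scale. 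Two-scale convergence does not allow you to pass to the limit in a product of two merely two-scale convergent sequences; some strong convergence is required for one of the factors.

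The paper closes this gap by a compactness argument you have not mentioned: after introducing $h_{\epsilon}(t,\mathbf{y})=g_{\epsilon}(t,\mathbf{X}(t/\epsilon;\mathbf{y},t;0))-S(t,t/\epsilon,\mathbf{y})$, the fast oscillation is filtered out so that $\partial_{t}h_{\epsilon}$ is bounded in $L^{\infty}(0,T;X^{s'}(K))$ uniformly in $\epsilon$ (this is where the Sobolev exponent relation is actually used, to control $\nabla_{\mathbf{x}}\cdot(\tilde{\mathbf{A}}_{\epsilon}h_{\epsilon})$ in a negative Sobolev space). Then Aubin--Lions gives \emph{strong} convergence $h_{\epsilon}\to H$ in $L^{\infty}(0,T;(W_{0}^{1,q}(K))')$, and only with this strong convergence can one pass to the limit in the product with $\tilde{\mathbf{A}}_{\epsilon}$. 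Note that your $\tilde g_{\epsilon}$ itself cannot be treated this way: its two-scale limit $\tilde G$ still depends on $\tau$, so $\partial_{t}\tilde g_{\epsilon}$ is of order $\epsilon^{-1}$ and no time-compactness is available before the change of variables. Your ``Main obstacle'' paragraph correctly flags that the Sobolev scale matters, but misidentifies \emph{why}: the issue is not merely making the pairings well-defined, it is obtaining the strong convergence needed to handle the bilinear term.
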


\begin{proof}
Since $(g_{\epsilon})_{\epsilon\,>\,0}$ is assumed to be bounded in $L^{\infty}\left(0,T;L_{loc}^{p}(\R^{n})\right)$ independently of $\epsilon$, it admits a two-scale limit $G = G(t,\tau,\mathbf{x})$ in the functional space $L^{\infty}\left(0,T;L_{\#}^{p}(0,\theta;L^{p}(\R^{n})\right)$. In the same spirit of \cite{Finite_Larmor_radius}, the next step of the proof consists in finding an equation linking the first order derivative of $G$ in $\tau$ to the derivatives of $G$ in $\mathbf{x}$-direction. For this purpose, we consider a test function $\psi = \psi(t,\tau,\mathbf{x})$ defined on $[0,T] \times \R \times \R^{n}$ being $\theta$-periodic in $\tau$ direction and with compact support $K \subset \R^{n}$ in $\mathbf{x}$-direction. We multiply \eqref{eq_geps_generic} by $\psi(t,\frac{t}{\epsilon},\mathbf{x})$ and we integrate the result in $t$ and $\mathbf{x}$. Some integrations by parts give
\begin{equation*}
\begin{split}
\int_{0}^{T} \int_{K} g_{\epsilon}(t,\mathbf{x}) \, \Bigg[ &(\D_{t}\psi)\left(t,\cfrac{t}{\epsilon},\mathbf{x}\right) + \cfrac{1}{\epsilon}\, (\D_{\tau}\psi)\left(t,\cfrac{t}{\epsilon},\mathbf{x}\right) + \mathbf{A}_{\epsilon}(t,\mathbf{x}) \cdot (\nabla_{\mathbf{x}}\psi)\left(t,\cfrac{t}{\epsilon},\mathbf{x}\right) \\
&+ \cfrac{1}{\epsilon}\,\mathbf{L}\left(t,\cfrac{t}{\epsilon},\mathbf{x}\right)\cdot (\nabla_{\mathbf{x}}\psi)\left(t,\cfrac{t}{\epsilon},\mathbf{x}\right) \Bigg] \, d\mathbf{x} \, dt \\
&= -\cfrac{1}{\epsilon}\, \int_{0}^{T} \int_{K} f_{\epsilon}(t,\mathbf{x}) \, \psi\left(t,\cfrac{t}{\epsilon},\mathbf{x}\right) \, d\mathbf{x}\, dt + \int_{K} g^{0}(\mathbf{x})\,\psi(0,0,\mathbf{x}) \, d\mathbf{x} \, .
\end{split}
\end{equation*}
Thanks to the considered assumptions for $g^{0}$, $\mathbf{A}_{\epsilon}$, $\mathbf{L}$ and $f_{\epsilon}$, we can multiply by $\epsilon$ and reach the limit $\epsilon \to 0$. This gives
\begin{equation*}
\begin{split}
\int_{0}^{\theta} \int_{0}^{T} \int_{K} G(t,\tau,\mathbf{x}) \, \Bigg[ \D_{\tau}\psi(t,\tau,\mathbf{x}) + &\mathbf{L}(t,\tau,\mathbf{x})\cdot \nabla_{\mathbf{x}}\psi(t,\tau,\mathbf{x}) \Bigg] \, d\mathbf{x} \, dt \\
&= -\int_{0}^{\theta}\int_{0}^{T} \int_{K} F(t,\tau,\mathbf{x}) \, \psi(t,\tau,\mathbf{x}) \, d\mathbf{x}\, dt \, d\tau \, .
\end{split}
\end{equation*}
This means that $G$ satisfies the following equation in $L^{\infty}\left(0,T;L_{\#}^{\infty}\left(0,\theta;L_{loc}^{p}(\R^{n})\right)\right)$:
\begin{equation*}
\D_{\tau}G(t,\tau,\mathbf{x}) + \mathbf{L}(t,\tau,\mathbf{x}) \cdot \nabla_{\mathbf{x}}G(t,\tau,\mathbf{x}) = F(t,\tau,\mathbf{x}) \, .
\end{equation*}
According to Lemma 2.1 from \cite{Two-scale_expansion} and thanks to the hypothesis \eqref{Speriodic}, we can write $G$ as follows
\begin{equation*}
G(t,\tau,\mathbf{x}) = H\left(t,\mathbf{X}(-\tau;\mathbf{x},t;0)\right) + \int_{0}^{\tau} F\left(t,\sigma,\mathbf{X}(\sigma-\tau;\mathbf{x},t;0)\right) \, d\sigma \, ,
\end{equation*}
with $H = H(t,\mathbf{x}) \in L^{\infty}\left(0,T;L_{loc}^{p}(\R^{n})\right)$. \\

\indent The next step consists in proving that $H$ satisfies \eqref{eq_H}. For this purpose, we introduce the sequence $(h_{\epsilon})_{\epsilon\,>\,0}$ defined as
\begin{equation} \label{link_gh}
g_{\epsilon}(t,\mathbf{x}) = h_{\epsilon}\left(t,\mathbf{X}(-\cfrac{t}{\epsilon};\mathbf{x},t;0)\right) + \int_{0}^{t/\epsilon} F\left(t,\sigma,\mathbf{X}(\sigma-\cfrac{t}{\epsilon};\mathbf{x},t;0)\right) \, d\sigma \, .
\end{equation}
Injecting this relation in \eqref{eq_geps_generic} gives
\begin{equation} \label{eq_heps_eq}
\left\{ \hspace{-0.4em}
\begin{array}{l}
\begin{split}
&\D_{t}h_{\epsilon}(t,\mathbf{x}) + \tilde{\mathbf{A}}_{\epsilon}(t,\mathbf{x}) \cdot \nabla_{\mathbf{x}}h_{\epsilon}(t,\mathbf{x}) \\
&\hspace{0.2em}= f_{\epsilon,1}\left(t,\mathbf{X}\left(\cfrac{t}{\epsilon};\mathbf{x},t;0\right)\right) - (\D_{t}S)\left(t,\cfrac{t}{\epsilon},\mathbf{x}\right) - \tilde{\mathbf{A}}_{\epsilon}(t,\mathbf{x}) \cdot \nabla_{\mathbf{x}}S\left(t,\cfrac{t}{\epsilon},\mathbf{x}\right) ,
\end{split}
\\
h_{\epsilon}(t=0,\mathbf{x}) = g^{0}(\mathbf{x}) \, ,
\end{array}
\right.
\end{equation}
where $\tilde{\mathbf{A}}_{\epsilon}$ is linked to $\mathbf{A}_{\epsilon}$ through the following relation:
\begin{equation*}
\tilde{\mathbf{A}}_{\epsilon}(t,\mathbf{x}) = \left((\nabla_{\mathbf{x}}\mathbf{X})\left(\cfrac{t}{\epsilon};\mathbf{x},t;0\right)\right)^{-1} \left( \mathbf{A}_{\epsilon}\left(t,\mathbf{X}\left(\cfrac{t}{\epsilon};\mathbf{x},t;0\right)\right)-(\D_{t}\mathbf{X})\left(\cfrac{t}{\epsilon};\mathbf{x},t;0\right) \right) \, .
\end{equation*}

From the definition of $h_{\epsilon}$ provided by \eqref{link_gh} and the hypotheses made for $F$ and $(g_{\epsilon})_{\epsilon\,>\,0}$, we can write
\begin{equation*}
\forall\, t, \qquad \left\|h_{\epsilon}(t,\cdot)\right\|_{L^{p}(K)} \leq \left\|g_{\epsilon}(t,\cdot)\right\|_{L^{p}(K)} + \theta\, \left\|F(t,\cdot,\cdot)\right\|_{L_{\#}^{\infty}\left(0,\theta;L^{p}(K)\right)} \, ,
\end{equation*}
for all compact subset $K \subset \R^{n}$ so we deduce that the sequence $(h_{\epsilon})_{\epsilon\,>\,0}$ is bounded independently of $\epsilon$ in $L^{\infty}\left(0,T;L_{loc}^{p}(\R^{n})\right)$ and, up to a subsequence, two-scale converges to $H$ in $L^{\infty}\left(0,T;L_{\#}^{\infty}\left(0,\theta;L^{p}(\R^{n})\right)\right)$. Indeed, if we consider a test function $\psi = \psi(t,\tau,\mathbf{x})$ on $[0,T] \times \R \times \R^{n}$ being $\theta$-periodic in $\tau$ direction and with compact support $K \subset \R^{n}$ in $\mathbf{x}$-direction, we have
\begin{equation*}
\begin{split}
&\lim_{\epsilon \to 0} \int_{0}^{T}\int_{\R^{n}} h_{\epsilon}(t,\mathbf{x}) \, \psi\left(t,\cfrac{t}{\epsilon},\mathbf{x}\right) \, d\mathbf{x}\, dt \\
&\hspace{0.6em}= \lim_{\epsilon \to 0} \int_{0}^{T}\int_{\R^{n}} \left[g_{\epsilon}\left(t,\mathbf{X}\left(\cfrac{t}{\epsilon};\mathbf{x},t;0\right)\right) - S\left(t,\cfrac{t}{\epsilon},\mathbf{x}\right) \right] \psi\left(t,\cfrac{t}{\epsilon},\mathbf{x}\right) \, d\mathbf{x}\, dt \\
&\hspace{0.6em}= \lim_{\epsilon \to 0} \int_{0}^{T}\int_{\R^{n}} \left[ g_{\epsilon}(t,\mathbf{x})\, \psi\left(t,\cfrac{t}{\epsilon},\mathbf{X}\left(-\cfrac{t}{\epsilon};\mathbf{x},t;0\right)\right) - S\left(t,\cfrac{t}{\epsilon},\mathbf{x}\right) \psi\left(t,\cfrac{t}{\epsilon},\mathbf{x}\right)\right] \, d\mathbf{x}\, dt \\
&\hspace{0.6em}= \cfrac{1}{\theta} \, \int_{0}^{\theta} \int_{0}^{T}\int_{\R^{n}} \left[ G(t,\tau,\mathbf{x}) \, \psi\left(t,\tau,\mathbf{X}\left(-\tau;\mathbf{x},t;0\right)\right) - S\left(t,\tau,\mathbf{x}\right) \psi\left(t,\tau,\mathbf{x}\right)\right] \, d\mathbf{x}\, dt \, d\tau \\
&\hspace{0.6em}= \cfrac{1}{\theta} \, \int_{0}^{\theta} \int_{0}^{T}\int_{\R^{n}} \left[ G\left(t,\tau,\mathbf{X}(\tau;\mathbf{x},t;0)\right) - S(t,\tau,\mathbf{x})\right] \psi(t,\tau,\mathbf{x}) \, d\mathbf{x}\, dt \, d\tau \\
&\hspace{0.6em}= \cfrac{1}{\theta} \, \int_{0}^{\theta} \int_{0}^{T}\int_{\R^{n}} H(t,\mathbf{x}) \, \psi(t,\tau,\mathbf{x}) \, d\mathbf{x}\, dt \, d\tau \, .
\end{split}
\end{equation*}
Consequently, $h_{\epsilon}$ weakly-* converges to $H$ in $L^{\infty}\left(0,T;L_{loc}^{p}(\R^{n})\right)$ according to Theorem \ref{TSCV_Allaire}. However, as in \cite{Finite_Larmor_radius}, we are able to obtain a strong convergence result for $h_{\epsilon}$ in a well-chosen functional space:
\begin{lemma} \label{strongCV_heps}
For any compact subset $K \subset \R^{n}$, the sequence $(h_{\epsilon})_{\epsilon\,>\,0}$ strongly converges to $H$ in $L^{\infty}\left(0,T;\left(W_{0}^{1,q}(K)\right)'\right)$.
\end{lemma}

\begin{proof}
The procedure is almost similar to the proof of Lemma 4.1 of \cite{Finite_Larmor_radius}. Indeed, from the assumptions made for the sequences $(g_{\epsilon})_{\epsilon\,>\,0}$, $(\mathbf{A}_{\epsilon})_{\epsilon\,>\,0}$, $\mathbf{L}$, $(f_{\epsilon})_{\epsilon\,>\,0}$ and $(f_{\epsilon,1})_{\epsilon\,>\,0}$, we consider a compact subset $K$ of $\R^{n}$ and we sucessively prove that
\begin{itemize}
\item $(\tilde{\mathbf{A}}_{\epsilon})_{\epsilon\,>\,0}$ is bounded independently of $\epsilon$ in $\left(L^{\infty}\left(0,T;W^{1,q}(K)\right)\right)^{n}$ and satisfies $\nabla_{\mathbf{x}} \cdot \tilde{\mathbf{A}}_{\epsilon} = 0$,
\item $(\tilde{\mathbf{A}}_{\epsilon})_{\epsilon\,>\,0}$ is bounded independently of $\epsilon$ in $\left(L^{\infty}\left(0,T;L^{r}(K)\right)\right)^{n}$ for any $r \in [1,\frac{nq}{n-q}[$,
\item $(\tilde{\mathbf{A}}_{\epsilon}\,h_{\epsilon})_{\epsilon\,>\,0}$ and $(\tilde{\mathbf{A}}_{\epsilon}\,S(\cdot,\frac{\cdot}{\epsilon},\cdot))_{\epsilon\,>\,0}$ are bounded independently of $\epsilon$ in the space  $\left(L^{\infty}\left(0,T;L^{s}(K)\right)\right)^{n}$ with $s$ satisfying $\frac{1}{s} = \frac{1}{q}+\frac{1}{r}$,
\item $\left(\nabla_{\mathbf{x}} \cdot (\tilde{\mathbf{A}}_{\epsilon}\,h_{\epsilon})\right)_{\epsilon\,>\,0}$ and $\left(\nabla_{\mathbf{x}} \cdot (\tilde{\mathbf{A}}_{\epsilon}\,S(\cdot,\frac{\cdot}{\epsilon},\cdot))\right)_{\epsilon\,>\,0}$ are bounded in the space $\left(L^{\infty}\left(0,T;\left(W_{0}^{1,s'}(K)\right)'\right)\right)^{n}$ and consequently in $\left(L^{\infty}\left(0,T;X^{s'}(K)\right)\right)^{n}$ independently of $\epsilon$.
\end{itemize}
In addition of these results, we deduce from the hypotheses on $F$ that $\D_{t}S$ is in $L^{\infty}\left(0,T;L_{\#}^{\infty}\left(0,\theta;L^{p}(K)\right)\right)$. At this point, we distinguish 2 different cases according to the considered value of $s'$ in front of $q$:

\begin{enumerate}
\item Assume that $s' > q$. This leads to the continuous embedding $\left(L^{q}(K)\right)' \subset \left(L^{s'}(K)\right)'$ and, consequently, to the continuous embedding $\left(W_{0}^{1,q}(K)\right)' \subset \left(W_{0}^{1,s'}(K)\right)'$, so  $X^{s'}(K) = \left(W_{0}^{1,s'}(K)\right)'$. On another hand, Rellich's theorem gives the compact embedding $L^{p}(K) \subset \left(W_{0}^{1,q}(K)\right)'$. Hence, $\D_{t}S$ and $f_{\epsilon,1}$ lie in $L^{\infty}\left(0,T;\left(W_{0}^{1,s'}(K)\right)'\right)$, and the sequence $(f_{\epsilon,1})_{\epsilon\,>\,0}$ is bounded independently of $\epsilon$ in this space. Finally, we can write that $(h_{\epsilon})_{\epsilon\,>\,0}$ is bounded independently of $\epsilon$ in the following space:
\begin{equation*}
\mathcal{U} = \left\{ h \in L^{\infty}\left(0,T;L^{p}(K)\right) \, : \, \D_{t}h \in L^{\infty}\left(0,T;\left(W_{0}^{1,s'}(K)\right)'\right) \right\} \, .
\end{equation*}
Aubin-Lions' lemma indicates that $\mathcal{U}$ is compactly embedded in the space $L^{\infty}\left(0,T;\left(W_{0}^{1,q}(K)\right)'\right)$, so $h_{\epsilon}$ weakly-* converges to $H$ in $L^{\infty}\left(0,T;L^{p}(K)\right)$ and strongly converges to $H$ in $L^{\infty}\left(0,T;\left(W_{0}^{1,q}(K)\right)'\right)$.
\item Assume that $s' \leq q$. As a consequence, we have the continuous embedding $\left(W_{0}^{1,s'}(K)\right)' \subset \left(W_{0}^{1,q}(K)\right)'$, $X^{s'}(K) = \left(W_{0}^{1,q}(K)\right)'$ and the compact embedding $L^{p}(K) \subset \left(W_{0}^{1,q}(K)\right)'$ so we are insured that $(\D_{t}h_{\epsilon})_{\epsilon\,>\,0}$ is bounded independently of $\epsilon$ in $L^{\infty}\left(0,T;\left(W_{0}^{1,q}(K)\right)'\right)$ and that $(h_{\epsilon})_{\epsilon\,>\,0}$ is bounded in the functional space $\mathcal{U}$ defined by
\begin{equation*}
\mathcal{U} = \left\{ h \in L^{\infty}\left(0,T;L^{p}(K)\right) \, : \, \D_{t}h \in L^{\infty}\left(0,T;\left(W_{0}^{1,q}(K)\right)'\right) \right\} \, .
\end{equation*}
Applying Aubin-Lions' lemma finally allows us to claim that the weak-* convergence of $h_{\epsilon}$ to $H$ in $L^{\infty}\left(0,T;L^{p}(K)\right)$ is a strong convergence in the space $L^{\infty}\left(0,T;\left(W_{0}^{1,q}(K)\right)'\right)$.
\end{enumerate}
\end{proof}

In order to conclude the proof of Theorem \ref{TSCV_g}, we now consider a test function $\psi = \psi(t,\mathbf{x})$ on $[0,T] \times \R^{n}$ with compact support $K \subset \R^{n}$ in $\mathbf{x}$-direction. If we multiply \eqref{eq_heps_eq} by $\psi(t,\mathbf{x})$, integrate the result in $t$ and $\mathbf{x}$, we obtain
\begin{equation*}
\begin{split}
-\int_{0}^{T}& \int_{\R^{n}} h_{\epsilon}(t,\mathbf{x}) \, \left[ \D_{t}\psi(t,\mathbf{x}) + \tilde{\mathbf{A}}_{\epsilon}(t,\mathbf{x}) \cdot \nabla_{\mathbf{x}}\psi(t,\mathbf{x}) \right] \, d\mathbf{x} \, dt - \int_{\R^{n}} g^{0}(\mathbf{x}) \, \psi(0,\mathbf{x}) \, d\mathbf{x} \\
&= \int_{0}^{T} \int_{\R^{n}} f_{\epsilon,1}(t,\mathbf{x}) \, \psi\left(t,\mathbf{X}\left(-\cfrac{t}{\epsilon};\mathbf{x},t;0\right)\right) \, d\mathbf{x} \, dt \\
&\quad - \int_{0}^{T} \int_{\R^{n}} \left[ (\D_{t}S)\left(t,\cfrac{t}{\epsilon},\mathbf{x}\right)\, \psi(t,\mathbf{x}) - S\left(t,\cfrac{t}{\epsilon},\mathbf{x}\right) \, \tilde{\mathbf{A}}_{\epsilon}(t,\mathbf{x})\cdot\nabla_{\mathbf{x}}\psi(t,\mathbf{x}) \right] \, d\mathbf{x}\, dt \, .
\end{split}
\end{equation*}
Thanks to Lemma \ref{strongCV_heps} and to the hypotheses we have formulated for $\mathbf{A}_{\epsilon}$, $f_{\epsilon,1}$ and $S$, we can write the limit obtained when $\epsilon$ converges to 0:
\begin{equation*}
\begin{split}
\int_{0}^{T} \int_{\R^{n}} &H(t,\mathbf{x}) \left[ \D_{t}\psi(t,\mathbf{x}) + \left[\cfrac{1}{\theta}\int_{0}^{\theta}\bm{\alpha}_{0}(t,\tau,\mathbf{x}) \, d\tau \right] \cdot \nabla_{\mathbf{x}}\psi(t,\mathbf{x}) \right]  d\mathbf{x} \, dt + \int_{\R^{n}} g^{0}(\mathbf{x}) \, \psi(0,\mathbf{x}) \, d\mathbf{x} \\
&= -\cfrac{1}{\theta}\, \int_{0}^{T} \int_{\R^{n}} \int_{0}^{\theta}F_{1}(t,\tau,\mathbf{x}) \, \psi\left(t,\mathbf{X}\left(-\tau;\mathbf{x},t;0\right)\right) \, d\tau\, d\mathbf{x} \, dt \\
&\qquad + \int_{0}^{T} \int_{\R^{n}} \left[\cfrac{1}{\theta}\,\int_{0}^{\theta}\D_{t}S\left(t,\tau,\mathbf{x}\right) \, d\tau\right] \, \psi(t,\mathbf{x}) \, d\mathbf{x}\, dt \\
&\qquad - \int_{0}^{T}\int_{\R^{n}} \left[\cfrac{1}{\theta}\,\int_{0}^{\theta} S\left(t,\tau,\mathbf{x}\right) \, \bm{\alpha}_{0}(t,\tau,\mathbf{x})\, d\tau \right] \cdot\nabla_{\mathbf{x}}\psi(t,\mathbf{x}) \, d\mathbf{x}\, dt \, .
\end{split}
\end{equation*}
This corresponds to the variational formulation of \eqref{eq_H} in $L^{\infty}\left(0,T;L_{loc}^{p}(\R^{n})\right)$.
\end{proof}

\subsection{Identification of each $U_{k}$}

Having Theorem \ref{TSCV_g} in hands, we can apply it for identifying each term $U_{k}$ of the expansion \eqref{expansion}. For obtaining some equations for $U_{0}$, we simply use this theorem with the source term $f_{\epsilon} = 0$ on $[0,T] \times \R^{n}$ for each $\epsilon \geq 0$. As a consequence, assuming that $(u_{\epsilon})_{\epsilon\,>\,0}$ is bounded independently in $\epsilon$ in $L^{\infty}\left(0,T;L_{loc}^{p}(\R^{n})\right)$ in addition of Hypotheses \ref{hyp_U0} is sufficient to get the two-scale convergence of $u_{\epsilon}$ to the profile $U_{0} = U_{0}(t,\tau,\mathbf{x})$ in $L^{\infty}\left(0,T;L_{\#}^{\infty}\left(0,\theta;L^{p}(\R^{n})\right)\right)$ entirely characterized by
\begin{equation*}
U_{0}(t,\tau,\mathbf{x}) = V_{0}\left(t,\mathbf{X}(-\tau;\mathbf{x},t;0)\right) \, ,
\end{equation*}
where $V_{0} = V_{0}(t,\mathbf{x}) \in L^{\infty}\left(0,T;L_{loc}^{p}(\R^{n})\right)$ satisfies
\begin{equation*}
\left\{
\begin{array}{l}
\displaystyle \D_{t}V_{0}(t,\mathbf{x}) + \tilde{\mathbf{a}}_{0}(t,\mathbf{x}) \cdot \nabla_{\mathbf{x}}V_{0}(t,\mathbf{x}) = 0 \, , \\
V_{0}(t=0,\mathbf{x}) = u^{0}(\mathbf{x}) \, .
\end{array}
\right.
\end{equation*}
This is the conclusion of Theorem \ref{def_U0}. For reaching the results of Theorem \ref{eq_U0}, we derive in $\mathbf{x}$ and $t$ the relation \eqref{link_U0V0} and we obtain
\begin{equation*}
\nabla_{\mathbf{x}}U_{0}(t,\tau,\mathbf{x}) = \left((\nabla_{\mathbf{x}}\mathbf{X})(-\tau;\mathbf{x},t;0)\right)^{T}\, (\nabla_{\mathbf{x}}V_{0})\left(t,\mathbf{X}(-\tau;\mathbf{x},t;0)\right) \, ,
\end{equation*}
and
\begin{equation*}
\begin{split}
\D_{t}U_{0}(t,\tau,\mathbf{x}) &= (\D_{t}V_{0})\left(t,\mathbf{X}(-\tau;\mathbf{x},t;0)\right) + \D_{t}\mathbf{X}(-\tau;\mathbf{x},t;0) \cdot (\nabla_{\mathbf{x}}V_{0})\left(t,\mathbf{X}(-\tau;\mathbf{x},t;0)\right) \\
&= \left[\D_{t}\mathbf{X}(-\tau;\mathbf{x},t;0)-\tilde{\mathbf{a}}_{0}\left(t,\mathbf{X}(-\tau;\mathbf{x},t;0)\right) \right] \cdot (\nabla_{\mathbf{x}}V_{0})\left(t,\mathbf{X}(-\tau;\mathbf{x},t;0)\right) \\
&= \left[\left((\nabla_{\mathbf{x}}\mathbf{X})(-\tau;\mathbf{x},t;0)\right)^{-1}\left[\D_{t}\mathbf{X}(-\tau;\mathbf{x},t;0)-\tilde{\mathbf{a}}_{0}\left(t,\mathbf{X}(-\tau;\mathbf{x},t;0)\right) \right] \right]\cdot \nabla_{\mathbf{x}}U_{0}(t,\tau,\mathbf{x}) \, .
\end{split}
\end{equation*}

\indent For identifying the higher order terms, more calculations are needed. First, we consider a fixed integer $k \in \N^{*}$ and we assume that Hypotheses \ref{Hilbert_Aeps} and \ref{Hilbert_ueps_kth} are satisfied at step $k$ and that the results of Theorem \ref{CV_Uk} are true for $i=0,\dots,k-1$, meaning that $U_{0},\dots,U_{k-1}$ are fully characterized. These assumptions authorize the definitions of $\bm{\alpha}_{i}$, $\tilde{\mathbf{a}}_{i}$, $\mathbf{a}_{i}$, $W_{i}$ and $R_{i}$ for any $i=0,\dots,k$ as it is suggested in paragraph \ref{TSCV_Uk}. Then we can write an evolution equation for $u_{\epsilon,i}$ for any $i = 1,\dots,k$ thanks to a recurrence procedure: this equation writes
\begin{equation*}
\left\{
\begin{array}{l}
\begin{split}
\D_{t}u_{\epsilon,i}(t,\mathbf{x}) &+ \mathbf{A}_{\epsilon}(t,\mathbf{x}) \cdot \nabla_{\mathbf{x}}u_{\epsilon,i}(t,\mathbf{x}) + \cfrac{1}{\epsilon}\,\mathbf{L}\left(t,\cfrac{t}{\epsilon},\mathbf{x}\right) \cdot \nabla_{\mathbf{x}}u_{\epsilon,i}(t,\mathbf{x}) \\
&= \displaystyle \cfrac{1}{\epsilon}\,\sum_{j\,=\,0}^{i-1}\left(\mathbf{a}_{j}\left(t,\cfrac{t}{\epsilon},\mathbf{x}\right) - \mathbf{A}_{\epsilon,j}(t,\mathbf{x}) \right) \cdot \nabla_{\mathbf{x}}U_{i-1-j}\left(t,\cfrac{t}{\epsilon},\mathbf{x}\right) - \cfrac{1}{\epsilon}\,R_{i-1}\left(t,\cfrac{t}{\epsilon},\mathbf{x}\right) \, ,
\end{split}
\\
u_{\epsilon,i}(t=0,\mathbf{x}) = 0 \, ,
\end{array}
\right.
\end{equation*}
for any $i > 0$. \\
\indent As a consequence, we aim to apply Theorem \ref{TSCV_g} with $f_{\epsilon}$ defined by
\begin{equation*}
f_{\epsilon}(t,\mathbf{x}) = \sum_{i\,=\,0}^{k-1}\left[ \mathbf{a}_{i}\left(t,\cfrac{t}{\epsilon},\mathbf{x}\right)-\mathbf{A}_{\epsilon,i}(t,\mathbf{x})\right] \cdot \nabla_{\mathbf{x}}U_{k-1-i}\left(t,\cfrac{t}{\epsilon},\mathbf{x}\right) - R_{k-1}\left(t,\cfrac{t}{\epsilon},\mathbf{x}\right) \, .
\end{equation*}
First, we have to verify if the sequence $(f_{\epsilon})_{\epsilon\,>\,0}$ is bounded independently of $\epsilon$ in $L^{\infty}\left(0,T;X^{s'}(K)\right)$ for any compact subset $K \subset \R^{n}$. For this purpose, we first remark that Hypotheses \ref{hyp_U0} and \ref{Hilbert_Aeps} imply that there exists a constant $C = C(K) > 0$ such that
\begin{equation*}
\left\|\mathbf{a}_{i}\left(t,\cfrac{t}{\epsilon},\cdot\right)-\mathbf{A}_{\epsilon,i}(t,\cdot)\right\|_{W^{1,q}(K)} \leq C(K) \, ,
\end{equation*}
for any $t \in [0,T]$ and $\epsilon > 0$. Hence, following the same methodology as in the proof of Lemma \ref{strongCV_heps} and assuming that $R_{k-1}$ is in $L^{\infty}\left(0,T;L_{\#}^{\infty}\left(0,\theta;X^{s'}(K)\right)\right)$ leads to the existence of a constant $C' = C'(K) > 0$ such that
\begin{equation*}
\|f_{\epsilon}(t,\cdot)\|_{X^{s'}(K)} \leq C'(K) \, ,
\end{equation*}
for any $\epsilon > 0$ and $t \in [0,T]$, where the norm $\|\cdot\|_{X^{s'}(K)}$ is either the usual norm on $\left(W_{0}^{1,q}(K)\right)'$ or $\left(W_{0}^{1,s'}(K)\right)'$ according to the sign of $s'-q$. \\
\indent This result indicates that $f_{\epsilon}$ two-scale converges to the profile $F = F(t,\tau,\mathbf{x})$ in $L^{\infty}\left(0,T;L_{\#}^{\infty}\left(0,\theta;X^{s'}(K)\right)\right)$ characterized by
\begin{equation*}
F(t,\tau,\mathbf{x}) = \sum_{i\,=\,0}^{k-1}\left[ \mathbf{a}_{i}(t,\tau,\mathbf{x})-\bm{\mathcal{A}}_{i}(t,\tau,\mathbf{x})\right] \cdot \nabla_{\mathbf{x}}U_{k-1-i}(t,\tau,\mathbf{x}) - R_{k-1}(t,\tau,\mathbf{x}) \, .
\end{equation*}
The next step consists in proving that $W_{k}$ defined by
\begin{equation*}
W_{k}(t,\tau,\mathbf{x}) = S(t,\tau,\mathbf{x}) = \int_{0}^{\tau} F\left(t,\sigma,\mathbf{X}(\sigma;\mathbf{x},t;0)\right) \, d\sigma \, ,
\end{equation*}
is such that
\begin{equation*}
\begin{array}{rcl}
W_{k} & \in & L^{\infty}\left(0,T;L_{\#}^{\infty}\left(0,\theta;L^{p}(K)\right)\right) \, , \\
\D_{t}W_{k} & \in & L^{\infty}\left(0,T;L_{\#}^{\infty}\left(0,\theta; X^{s'}(K)\right)\right) \, , \\
W_{k}(t,\theta,\mathbf{x}) & = & 0 \, , \quad \forall\, t,\mathbf{x} \, .
\end{array}
\end{equation*}
The first two points are handled thanks to the hypotheses for $W_{k}$ which are added for claiming Theorem \ref{CV_Uk}. The last point can be proved by using the definition of $R_{k-1}$. Indeed, we have
\begin{equation} \label{Wktheta}
\begin{split}
W_{k}(t,\theta,\mathbf{x}) &= \int_{0}^{\theta} \left(\sum_{i\,=\,0}^{k-1}\left[ \mathbf{a}_{i}-\bm{\mathcal{A}}_{i}\right] \cdot \nabla_{\mathbf{x}}U_{k-1-i} - R_{k-1}\right)\left(t,\tau,\mathbf{X}(\tau;\mathbf{x},t;0)\right)\, d\tau \, ,
\end{split}
\end{equation}
with
\begin{equation} \label{eq_Rkm1}
\begin{split}
R_{k-1}\left(t,\tau,\mathbf{X}(\tau;\mathbf{x},t;0)\right) &= \D_{t}W_{k-1}(t,\tau,\mathbf{x}) + \tilde{\mathbf{a}}_{0}(t,\mathbf{x}) \cdot \nabla_{\mathbf{x}}W_{k-1}(t,\tau,\mathbf{x}) \\
&\quad - \cfrac{1}{\theta}\,\int_{0}^{\theta}(\D_{t}W_{k-1}+\bm{\alpha}_{0}\cdot\nabla_{\mathbf{x}}W_{k-1})(t,\sigma,\mathbf{x}) \, d\sigma \\
&\quad + \sum_{i\,=\,1}^{k-1}\Bigg[ \cfrac{1}{\theta}\,\int_{0}^{\theta} \bm{\alpha}_{i}(t,\sigma,\mathbf{x}) \cdot \left[ \nabla_{\mathbf{x}}W_{k-1-i}(t,\tau,\mathbf{x}) - \nabla_{\mathbf{x}}W_{k-1-i}(t,\sigma,\mathbf{x}) \right] d\sigma \Bigg] \, ,
\end{split}
\end{equation}
and
\begin{equation} \label{eq_aiAinablaUkmim1}
\begin{split}
&\left(\big[\mathbf{a}_{i} - \bm{\mathcal{A}}_{i} \big] \cdot \nabla_{\mathbf{x}}U_{k-1-i} \right) \left(t,\tau,\mathbf{X}(\tau;\mathbf{x},t;0)\right) \\
&\qquad = \left[(\nabla_{\mathbf{x}}\mathbf{X})(\tau;\mathbf{x},t;0) \left[ \tilde{\mathbf{a}}_{i}(t,\mathbf{x}) - \bm{\alpha}_{i}(t,\tau,\mathbf{x}) \right] \right] \cdot (\nabla_{\mathbf{x}}U_{k-1-i})\left(t,\tau,\mathbf{X}(\tau;\mathbf{x},t;0)\right) \\
&\qquad = \left[ \tilde{\mathbf{a}}_{i}(t,\mathbf{x}) - \bm{\alpha}_{i}(t,\tau,\mathbf{x}) \right] \cdot \left( \nabla_{\mathbf{x}}V_{k-1-i}(t,\mathbf{x}) + \nabla_{\mathbf{x}}W_{k-1-i}(t,\tau,\mathbf{x}) \right) \, ,
\end{split}
\end{equation}
for any $i=0,\dots,k-1$. Hence, using the links between $\tilde{\mathbf{a}}_{i}$, $\bm{\alpha}_{i}$ and $\bm{\mathcal{A}}_{i}$, we inject \eqref{eq_Rkm1} and \eqref{eq_aiAinablaUkmim1} in \eqref{Wktheta} for obtaining a new formulation for $W_{k}$:
\begin{equation*}
\begin{split}
W_{k}(t,\tau,\mathbf{x}) &= \sum_{i\,=\,0}^{k-1} \left[\int_{0}^{\tau} \left(\tilde{\mathbf{a}}_{i}(t,\mathbf{x})-\bm{\alpha}_{i}(t,\sigma,\mathbf{x})\right) \, d\sigma \right] \cdot \nabla_{\mathbf{x}}V_{k-1-i}(t,\mathbf{x}) \\
&\quad + \sum_{i\,=\,0}^{k-1} \int_{0}^{\tau} \Bigg[ \cfrac{1}{\theta}\,\int_{0}^{\theta} \bm{\alpha}_{i}(t,\xi,\mathbf{x})\cdot \nabla_{\mathbf{x}}W_{k-1-i}(t,\xi,\mathbf{x}) \, d\xi - \bm{\alpha}_{i}(t,\sigma,\mathbf{x})\cdot \nabla_{\mathbf{x}}W_{k-1-i}(t,\sigma,\mathbf{x}) \Bigg] \, d\sigma \\
&\quad + \int_{0}^{\tau} \Bigg[\cfrac{1}{\theta}\,\int_{0}^{\theta}\D_{t}W_{k-1}(t,\zeta,\mathbf{x})\, d\zeta - \D_{t}W_{k-1}(t,\sigma,\mathbf{x}) \Bigg]\, d\sigma \, .
\end{split}
\end{equation*}
It becomes straightforward that $W_{k}(t,\theta,\mathbf{x}) = 0$ for any $t$ and for any $\mathbf{x}$. \\
\indent The last property we have have to satisfy for completing the proof of Theorem \ref{CV_Uk} consists in proving that the sequence $(f_{\epsilon,1})_{\epsilon\,>\,0}$ defined by
\begin{equation*}
\begin{split}
f_{\epsilon,1}(t,\mathbf{x}) &= \cfrac{1}{\epsilon}\,\left(f_{\epsilon}(t,\mathbf{x})-F\left(t,\cfrac{t}{\epsilon},\mathbf{x}\right)\right) \\
&= \cfrac{1}{\epsilon}\,\sum_{i\,=\,0}^{k-1}\left[ \bm{\mathcal{A}}_{i}\left(t,\cfrac{t}{\epsilon},\mathbf{x}\right)-\mathbf{A}_{\epsilon,i}(t,\mathbf{x})\right]\,\cdot \nabla_{\mathbf{x}}U_{k-1-i}\left(t,\cfrac{t}{\epsilon},\mathbf{x}\right) \, ,
\end{split}
\end{equation*}
is bounded independently of $\epsilon$ in $L^{\infty}\left(0,T;L_{\#}^{\infty}\left(0,\theta;X^{s'}(K)\right)\right)$ for any compact subset $K \subset \R^{n}$. To obtain this result, we remark that $f_{\epsilon,1}$ can write as
\begin{equation*}
f_{\epsilon,1}(t,\mathbf{x}) = -\sum_{i\,=\,1}^{k}\mathbf{A}_{\epsilon,i}(t,\mathbf{x})\cdot\nabla_{\mathbf{x}}U_{k-i}\left(t,\cfrac{t}{\epsilon},\mathbf{x}\right) \, ,
\end{equation*}
thanks to Hypothesis \ref{Hilbert_Aeps}. Consequently, this sequence admits the profile $F_{1} = F_{1}(t,\tau,\mathbf{x})$ defined as
\begin{equation*}
F_{1}(t,\mathbf{x}) = -\sum_{i\,=\,1}^{k}\bm{\mathcal{A}}_{i}(t,\tau,\mathbf{x})\cdot\nabla_{\mathbf{x}}U_{k-i}(t,\tau,\mathbf{x}) \, ,
\end{equation*}
as a two-scale limit in $L^{\infty}\left(0,T;L_{\#}^{\infty}\left(0,\theta;X^{s'}(K)\right)\right)$. \\
\indent We end the proof of Theorem \ref{CV_Uk} by assuming that $(u_{\epsilon,k})_{\epsilon\,>\,0}$ is bounded independently of $\epsilon$ in $L^{\infty}\left(0,T;L_{loc}^{p}(\R^{n})\right)$: we deduce that $u_{\epsilon,k}$ two-scale converges to the profile $U_{k} = U_{k}(t,\tau,\mathbf{x})$ in $L^{\infty}\left(0,T;L_{\#}^{\infty}\left(0,\theta;L^{p}(\R^{n})\right)\right)$ defined by
\begin{equation*}
U_{k}(t,\tau,\mathbf{x}) = V_{k}\left(t,\mathbf{X}(-\tau;\mathbf{x},t;0)\right) + W_{k}\left(t,\tau,\mathbf{X}(-\tau;\mathbf{x},t;0)\right) \, ,
\end{equation*}
where $V_{k} = V_{k}(t,\mathbf{x}) \in L^{\infty}\left(0,T;L_{loc}^{p}(\R^{n})\right)$ satisfies
\begin{equation*}
\left\{
\begin{array}{l}
\begin{split}
\D_{t}V_{k}(t,\mathbf{x}) + &\tilde{\mathbf{a}}_{0}(t,\mathbf{x}) \cdot \nabla_{\mathbf{x}}V_{k}(t,\mathbf{x}) \\
&= -\cfrac{1}{\theta} \int_{0}^{\theta} \left[ \sum_{i\,=\,1}^{k}(\bm{\mathcal{A}}_{i}\cdot\nabla_{\mathbf{x}}U_{k-i})\left(t,\tau,\mathbf{X}(\tau;\mathbf{x},t;0)\right) \right] \, d\tau \\
&\qquad -\cfrac{1}{\theta} \int_{0}^{\theta} \left[ \D_{t}W_{k}(t,\tau,\mathbf{x}) + \bm{\alpha}_{0}(t,\tau,\mathbf{x})\cdot\nabla_{\mathbf{x}}W_{k}(t,\tau,\mathbf{x}) \right] \, d\tau \, ,
\end{split}
\\
V_{k}(t=0,\mathbf{x}) = \left\{
\begin{array}{ll}
u^{0}(\mathbf{x}) \, , & \textnormal{if $k=0$,} \\ 0 & \textnormal{else.}
\end{array}
\right.
\end{array}
\right.
\end{equation*}

\section{Conclusions and perspectives}

We have proposed some two-scale convergence results for a particular kind of convection equations in which a part of the convection term is singularly perturbed. These results can be viewed as an improvement of the calculations done by Fr\'enod, Raviart and Sonnendr\"ucker in \cite{Two-scale_expansion} since the properties of the convection terms $\mathbf{A}_{\epsilon}$ and $\mathbf{L}$ are less restrictive: indeed, in the present paper, the two-scale convergence can be proved with a $\epsilon$-dependent $\mathbf{A}_{\epsilon}$ and with $\mathbf{L}$ depending on $\epsilon$ in some particular sense. Along with these results, we have described the list of required hypotheses on $(\mathbf{A}_{\epsilon})_{\epsilon\,>\,0}$ and $\mathbf{L}$ for reaching the $k$-th order of two-scale convergence for $(u_{\epsilon})_{\epsilon\,>\,0}$. Finally, we have applied these new results to three different rescaled linear Vlasov equations that canbe considered in the context of MCF or charged particle beams. The limit systems that have been obtained consolidate the existing results and complete them by proposing $k$-th order two-scale limit models. \\
\indent From a numerical point of view, these new informations can be used for enriching the two-scale numerical methods that are currently based on the resolution of the 0-th order limit model: in particular, the limit model presented in Theorem \ref{TSCV_F0_axibeam} is discretized for approaching the solution of \eqref{Vlasov_axibeam_intro} but these numerical experiments are relevant for $\epsilon \ll 1$ (see \cite{PIC-two-scale,Mouton_2009}). Combining this approach with the numerical resolution of higher order two-scale limit models like \eqref{def_Gk_axibeam} may provide some relevant numerical results for values of $\epsilon$ which are less close to 0.

\end{document}